\newtheorem{theorem}{Theorem}[section]
\newtheorem{lemma}[theorem]{Lemma}
\newtheorem{corollary}[theorem]{Corollary}
\newtheorem{proposition}[theorem]{Proposition}
\theoremstyle{definition}
\newtheorem{definition}[theorem]{Definition}
\newtheorem{remark}[theorem]{Remark}
\newtheorem{example}[theorem]{Example}
\def\R{\mathbb{R}}
\def\Z{\mathbb{Z}}
\def\F{\mathbb{F}}
\def\CC{\mathcal{C}}
\def\FF{\mathcal{F}}
\numberwithin{equation}{section}
\def\Khred{\widetilde{\operatorname{Kh}}}
\def\Kh{\operatorname{Kh}}
\def\CKh{\operatorname{CKh}}
\def\HF {\widehat{\operatorname{HF}}}
\def\KhBS{\operatorname{Kh_{\mathit{BS}}}}
\newcommand{\abs}[1] {\left\lvert #1 \right\rvert}
\def\Th{^{\text{th}}}
\def\minus{\smallsetminus}
\def\co{\colon\thinspace}
\newcommand{\Brackets}[1] { \left\llbracket #1 \right\rrbracket }
 \DeclareMathOperator{\id}{id}
 \DeclareMathOperator{\nbd}{nbd}
\DeclareMathOperator{\Mod}{Mod}
 \DeclareMathOperator{\Kom}{Kom} \DeclareMathOperator{\CKom}{CKom} \DeclareMathOperator{\CKob}{CKob} \DeclareMathOperator{\Mat}{Mat} \DeclareMathOperator{\Cob}{Cob}
\DeclareMathOperator{\Kob}{Kob} \DeclareMathOperator{\Mor}{Mor} \DeclareMathOperator{\Obj}{Obj}
\DeclareMathOperator{\intt}{int}
\DeclareMathOperator{\ext}{ext}
\DeclareMathOperator{\sep}{sep}
\DeclareMathOperator{\splitt}{split}
\def\KJ{\mathit{KJ}}
\newcommand{\tikzbox}[1]{\vcenter{\hbox{#1}}}
\renewcommand{\MR}[1]{}
\numberwithin{equation}{section}
\begin{document}
\begin{abstract}
In this paper, we study the (in)sensitivity of the Khovanov functor to four-dimensional linking of surfaces. We prove that if $L$ and $L'$ are split links, and $C$ is a cobordism between $L$ and $L'$ that is the union of disjoint (but possibly linked) cobordisms between the components of $L$ and the components of $L'$, then the map on Khovanov homology induced by $C$ is completely determined by the maps induced by the individual components of $C$ and does not detect the linking between the components. As a corollary, we prove that a strongly homotopy-ribbon concordance (i.e., a concordance whose complement can be built with only 1- and 2-handles) induces an injection on Khovanov homology, which generalizes a result of the second author and Zemke. Additionally, we show that a non-split link cannot be ribbon concordant to a split link.
\end{abstract}

\title{Khovanov homology and cobordisms between split links}
\author{Onkar Singh Gujral}
\address{Department of Mathematics, Massachusetts Institute of Technology, Cambridge, MA 02139, USA}
\email{onkar@mit.edu}

\author{Adam Simon Levine}
\address{Department of Mathematics, Duke University, Durham, NC 27705, USA}
\email{alevine@math.duke.edu}

\maketitle

\tikzstyle{basepoint}=[circle,fill=black,minimum height=4pt,inner sep=0pt, outer sep=0pt, style={transform shape=false}]
\tikzstyle{crossing}=[circle,fill=white,minimum height=5pt,inner sep=0pt, outer sep=0pt, style={transform shape=false}]

\tikzstyle{blackdot}=[circle,fill=black,minimum height=4pt,inner sep=0pt, outer sep=0pt, style={transform shape=false}]
\tikzstyle{greendot}=[circle,fill=green!50!black,minimum height=4pt,inner sep=0pt, outer sep=0pt, style={transform shape=false}]
\tikzstyle{bluedot}=[circle,fill=blue,minimum height=4pt,inner sep=0pt, outer sep=0pt, style={transform shape=false}]
\tikzstyle{blackempty}=[circle,draw,thick,black,fill=white,minimum height=4pt,inner sep=0pt, outer sep=0pt, style={transform shape=false}]
\tikzstyle{greenempty}=[circle,draw,thick,green!50!black,fill=white,minimum height=4pt,inner sep=0pt, outer sep=0pt, style={transform shape=false}]

\newcommand{\crossing}[1][1]{
\begin{tikzpicture}[scale=#1]
      \draw[thick] (0,1) -- (1,0);
      \node[crossing] at (.5,.5) {};
      \draw[thick] (0,0) -- (1,1);
\end{tikzpicture}}
\newcommand{\vres}[1][1]{
    \begin{tikzpicture}[scale=#1]
      \draw[thick] (0,0) .. controls (.4,.4) and (.4,.6) .. (0,1);
      \draw[thick] (1,0) .. controls (.6,.4) and (.6,.6) .. (1,1);
    \end{tikzpicture}}
\newcommand{\hres}[1][1]{
    \begin{tikzpicture}[scale=#1]
      \draw[thick] (0,0) .. controls (.4,.4) and (.6,.4) .. (1,0);
      \draw[thick] (0,1) .. controls (.4,.6) and (.6,.6) .. (1,1);
    \end{tikzpicture}}
\newcommand{\vsaddle}[1][1]{
    \begin{tikzpicture}[scale=#1]
      \draw[thick] (0,0) .. controls (.4,.4) and (.4,.6) .. (0,1);
      \draw[thick] (1,0) .. controls (.6,.4) and (.6,.6) .. (1,1);
      \draw[] (0.3 ,0.5) -- (0.7,0.5);
    \end{tikzpicture}}
\newcommand{\hsaddle}[1][1]{
    \begin{tikzpicture}[scale=#1]
      \draw[thick] (0,0) .. controls (.4,.4) and (.6,.4) .. (1,0);
      \draw[thick] (0,1) .. controls (.4,.6) and (.6,.6) .. (1,1);
      \draw[] (0.5 ,0.3) -- (0.5,0.7);
    \end{tikzpicture}}

\newcommand{\sphere}[1][1]{
\begin{tikzpicture}[scale=#1]
    \draw (0,0) circle (1);
    \draw (1,0) arc (0:-180:1 and 0.3);
    \draw[dashed] (1,0) arc (0:180:1 and 0.3);
\end{tikzpicture}
}

\newcommand{\dottedsphere}[1][1]{
\begin{tikzpicture}[scale=#1]
    \draw (0,0) circle (1);
    \draw (1,0) arc (0:-180:1 and 0.3);
    \draw[dashed] (1,0) arc (0:180:1 and 0.3);
    \node[] at (0,0.65) {$\bullet$};
\end{tikzpicture}
}

\newcommand{\unknot}[1][1]{
\begin{tikzpicture}[scale=#1]
  \draw[thick] (0,0) circle (0.5);
\end{tikzpicture}
}

\newcommand{\birth}[1][1]{
\begin{tikzpicture}[scale=#1]
\begin{scope}[rotate=45];
\draw[thick] (0,0) circle (0.5);
\draw (.5,0) -- (.75,0);
\draw (-.5,0) -- (-.75,0);
\draw (0,.5) -- (0,.75);
\draw (0,-.5) -- (0,-.75);
\end{scope}
\end{tikzpicture}
}

\newcommand{\death}[1][1]{
\begin{tikzpicture}[scale=#1]
\begin{scope}[rotate=45];
\draw[thick] (0,0) circle (0.5);
\draw (.5,0) -- (.25,0);
\draw (-.5,0) -- (-.25,0);
\draw (0,.5) -- (0,.25);
\draw (0,-.5) -- (0,-.25);
\end{scope}
\end{tikzpicture}
}

\newcommand{\torus}[1][1]{
\begin{tikzpicture}[scale=#1]
    \draw (0,0) ellipse (1.5 and 1);
    \draw (-.9,0) .. controls (-.5, -.5) and (.5,-.5) .. (.9,0);
    \draw (-.75,-.15) .. controls (-.5, .5) and (.5,.5) .. (.75,-.15);
\end{tikzpicture}
}

\newsavebox{\vsaddlebox}
\sbox{\vsaddlebox}{\vsaddle[0.25]}%

\newsavebox{\hsaddlebox}
\sbox{\hsaddlebox}{\hsaddle[0.25]}%

\newcommand{\poscr}[1][1]{
\begin{tikzpicture}[scale=#1]
      \draw [->] (1,0) -- (0,1);
      \node[crossing] at (.5,.5) {};
      \draw [->](0,0) -- (1,1);
\end{tikzpicture}}

\newcommand{\negcr}[1][1]{
\begin{tikzpicture}[scale=#1]
      \draw [->](0,0) -- (1,1);
      \node[crossing] at (.5,.5) {};
      \draw [->] (1,0) -- (0,1);
\end{tikzpicture}}

\newcommand{\vshadedposcr}[1][1]{
\begin{tikzpicture}[scale=#1]
      \filldraw[black!20!white] (0,1) -- (.5,.5) -- (1,1);
      \filldraw[black!20!white] (0,0) -- (.5,.5) -- (1,0);
      \draw [->, thick] (1,0) -- (0,1);
      \node[crossing] at (.5,.5) {};
      \draw [->,thick](0,0) -- (1,1);
\end{tikzpicture}}

\newcommand{\hshadedposcr}[1][1]{
\begin{tikzpicture}[scale=#1]
      \filldraw[black!20!white] (0,1) -- (.5,.5) -- (0,0);
      \filldraw[black!20!white] (1,1) -- (.5,.5) -- (1,0);
      \draw [->, thick] (1,0) -- (0,1);
      \node[crossing] at (.5,.5) {};
      \draw [->,thick](0,0) -- (1,1);
\end{tikzpicture}}

\newcommand{\vshadednegcr}[1][1]{
\begin{tikzpicture}[scale=#1]
      \filldraw[black!20!white] (0,1) -- (.5,.5) -- (1,1);
      \filldraw[black!20!white] (0,0) -- (.5,.5) -- (1,0);
      \draw [->,thick](0,0) -- (1,1);
      \node[crossing] at (.5,.5) {};
      \draw [->,thick] (1,0) -- (0,1);
\end{tikzpicture}}

\newcommand{\hshadednegcr}[1][1]{
\begin{tikzpicture}[scale=#1]
      \filldraw[black!20!white] (0,1) -- (.5,.5) -- (0,0);
      \filldraw[black!20!white] (1,1) -- (.5,.5) -- (1,0);
      \draw [->,thick](0,0) -- (1,1);
      \node[crossing] at (.5,.5) {};
      \draw [->,thick] (1,0) -- (0,1);
\end{tikzpicture}}

\section{Introduction} \label{sec:introduction}

Khovanov homology is a link homology theory, discovered by Mikhail Khovanov \cite{Khov}, that is functorial under link cobordisms. It associates to any link $L \subset \R^3$ a bigraded group called $\Kh(L)$ (and, more generally, for any commutative ring $R$, a bigraded $R$-module $\Kh(L;R)$). Further, a smoothly embedded cobordism $C$ in $\R^3 \times [0,1]$ between links $L_0$ and $L_1$ (i.e., a smoothly embedded, oriented surface in $\R^3 \times [0,1]$ with oriented boundary $-L_0 \times \{0\} \cup L_1 \times \{1\}$) induces a map $\Kh(C):\Kh(L_0;R)\to \Kh(L_1;R)$ that is an invariant of $C$ under smooth isotopies rel boundary. In this paper, we shall prove that the Khovanov functor is insensitive to the $4$-dimensional linking of cobordisms between split links, in a sense that we now make precise.

\begin{definition} \label{def: splitting}
A \emph{splitting} of a link $L$ is a decomposition $L=L^1 \cup \dots \cup L^k$, where $L^1, \dots,L^k$ are links, such that $L^1, \dots,L^k$ are contained in disjoint 3-balls $B^1, \dots, B^k \subset \R^3$. $L^1, \dots,L^k$ are called the \emph{parts} of the splitting. A \emph{total splitting} is a splitting in which each of the parts is a knot.

Given links $L_0$ and $L_1$ with partitions $L=L_0^1\cup  \dots  \cup L_0^k$ and $L_1=L_1^1\cup  \dots  \cup L_1^k$ (which need not be splittings), a cobordism $C \subset \R^3 \times [0,1]$ is called \emph{partition-preserving} if it is a disjoint union of surfaces $C^1, \dots, C^k$ (the \emph{parts} of $C$), where $C^i$ is a cobordism from $L_0^i$ to $L_1^i$. The cobordisms $C^i$ are not required to be connected. We say the cobordism is \emph{split} if $L_0$ and $L_1$ are split links (using the same family of $3$-balls $B^1, \dots, B^k$) and each $C^i$ is contained in $B^i \times [0,1]$.

Two partition-preserving cobordisms $C = C^1 \cup \dots \cup C^k$ and $D = D^1 \cup \dots \cup D^k$ from $L_0$ to $L_1$ are called \emph{partition-homotopic} if there is a smooth homotopy from $C$ to $D$ (rel boundary) that is an isotopy when restricted to each of the components $C^i$. This is equivalent to the statement that for each $i$, $C^i$ is isotopic rel boundary to $D^i$.
\end{definition}

The following is the main theorem of the paper.

\begin{theorem} \label{thm: main}
Let $L_0$ and $L_1$ be links with splittings into $k$ parts, and let $C$ and $D$ be partition-preserving cobordisms from $L_0$ to $L_1$ that are partition-homotopic. Then the maps $\Kh(C)$ and $\Kh(D)$ are equal up to an overall sign.
\end{theorem}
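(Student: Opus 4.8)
The plan is to reduce the statement to a purely local, combinatorial claim about the cube-of-resolutions complex, exploiting the fact that everything happens inside a fixed family of disjoint balls $B^1, \dots, B^k$. Since $L_0$ and $L_1$ are split using the balls $B^i$, we may choose diagrams $D_0, D_1$ that are literal disjoint unions of diagrams $D_0^i, D_1^i$ drawn in disjoint regions of the plane. The key algebraic input is that Khovanov's functor is monoidal for split unions: there is a natural isomorphism $\CKh(D_0) \cong \CKh(D_0^1) \otimes \dots \otimes \CKh(D_0^k)$ (over the ground ring, with the appropriate grading shifts), and similarly for $L_1$. Under this identification, any partition-preserving \emph{split} cobordism $C = C^1 \cup \dots \cup C^k$ induces, on the chain level, a map of the form $f^1 \otimes \dots \otimes f^k$ where $f^i = \CKh(C^i)$; hence $\Kh(C) = \Kh(C^1) \otimes \dots \otimes \Kh(C^k)$ depends only on the partition-homotopy classes of the $C^i$, and the theorem is immediate in the split case by functoriality of each $\Kh(C^i)$.

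The content of the theorem is therefore entirely in passing from split cobordisms to arbitrary partition-preserving cobordisms, i.e.\ in handling the $4$-dimensional linking. First I would put $C$ and $D$ into generic position with respect to the projection $\R^3 \times [0,1] \to \R^2 \times [0,1]$, presenting each as a movie of planar diagrams interpolating between $D_0$ and $D_1$, decomposed into elementary string interactions (Reidemeister moves and the three Morse moves: births, deaths, saddles). Because $C$ is a disjoint union of the $C^i$, each elementary move in the movie can be taken to involve strands of only a single part $L^i$; the only way the parts interact is that strands of different parts may cross each other in the projection, and these crossings can be braided around over the course of the movie. I would then argue that one can isotope the \emph{diagram} (not the cobordism) of $C$ — performing a $4$-dimensional isotopy that slides the balls around and ultimately unlinks the projections of the parts — at the cost of introducing extra Reidemeister-II/III moves between strands of different parts and extra crossing changes in the planar projection. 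The crucial claim is that all such "cross-part" modifications induce chain homotopy equivalences that respect, up to chain homotopy and sign, the tensor-product decomposition; concretely, a Reidemeister move or a crossing change involving one strand of $L^i$ and one strand of $L^j$ induces a map that, after identifying source and target with tensor products, is (chain homotopic to) a map built from the identity on the $L^\ell$ factors for $\ell \ne i,j$ tensored with an explicit local map on the $i,j$ factors that itself does not see the linking — essentially because the Khovanov complex of a crossing between two distinct split components is, locally, an elementary cone whose differential is a single saddle cobordism supported in one ball.

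The main obstacle, and where I expect the real work to lie, is making precise and proving this last claim: that the planar "entanglement" between distinct parts contributes only tensor-factor-wise data up to chain homotopy. This is the step where the functoriality package of Khovanov homology — in particular the invariance proofs (Bar-Natan's or Khovanov's) showing that Reidemeister moves induce canonical homotopy equivalences, together with the sign ambiguity — must be leveraged in a relative/parametrized form. I would organize it as follows: (i) prove a "sliding lemma" stating that if two sub-tangles lie in disjoint balls then the Khovanov chain complex of their union, with any number of planar crossings between them, is canonically homotopy equivalent to the tensor product, naturally in cobordisms supported in either ball; (ii) show the cobordism maps induced by the cross-part Reidemeister and Morse moves used to unlink the projection are, under these equivalences, identity maps on the irrelevant factors and the canonical equivalences on the rest; (iii) conclude that $\Kh(C)$ equals (up to sign) $\Kh(C^1_{\mathrm{split}}) \otimes \dots \otimes \Kh(C^k_{\mathrm{split}})$ where $C^i_{\mathrm{split}}$ is the part $C^i$ regarded as a cobordism in its own ball, which depends only on the isotopy class of $C^i$; applying the same to $D$ and using $C^i \simeq D^i$ rel boundary finishes the proof. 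Tracking the overall sign (which is all functoriality of Khovanov homology guarantees anyway) is a bookkeeping nuisance but causes no essential difficulty, so the statement is only claimed up to sign.
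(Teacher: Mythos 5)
Your overall architecture (reduce to the split case via the K\"unneth decomposition, present $C$ as a movie, and try to ``unlink'' the parts at the cost of cross-part Reidemeister moves and crossing changes) matches the paper's outline, but the central claim in your step (i)--(ii) has a genuine gap that the rest of the argument cannot absorb. You assert that a crossing change between a strand of $L^i$ and a strand of $L^j$ induces a chain homotopy equivalence of Khovanov complexes compatible with the tensor decomposition, ``because the Khovanov complex of a crossing between two distinct split components is, locally, an elementary cone whose differential is a single saddle.'' This is false for the ordinary Khovanov complex: the cone on a saddle is not contractible, and changing a crossing genuinely changes the homotopy type (the Hopf link versus the two-component unlink is the basic counterexample). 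Your sliding lemma is only plausible when the link in question is actually split in $\R^3$, but the intermediate frames $C \cap (\R^3 \times \{t\})$ of the movie need not be split links when the parts $C^i$ are linked in four dimensions --- which is exactly the situation the theorem is about --- so Reidemeister invariance does not supply the needed equivalences, and a crossing change is not even a cobordism, so functoriality gives you no map at all.

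The missing idea is a mechanism that turns cross-part crossing changes into honest isomorphisms, and this is what the paper supplies via the Batson--Seed deformation. One perturbs the Khovanov differential by backward saddles weighted by differences of component weights; when those differences are invertible, the crossing change map becomes an isomorphism of the deformed complexes (Lemma \ref{lemma: CC-chainmap}). One must then prove that these isomorphisms commute, up to homotopy and units, with the elementary cobordism maps --- the Reidemeister~3 case requires the ``simplicity'' of trivial tangles (Lemmas \ref{lemma: trivial-tangle}, \ref{lemma: CC-reid}) --- yielding Theorem \ref{thm: sep-tangle}. Finally, the conclusion is transferred back to ordinary Khovanov homology using the fact that $d_-$ vanishes on a split diagram whose parts carry constant weights (Lemma \ref{lemma: dminus0}); since over $\Z$ the only invertible differences are $\pm 1$, this forces an induction on the number of parts $k$ with weights $0$ and $1$ (Lemma \ref{lemma: Kh-split}). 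Without this deformation (or an equivalent substitute), your step (ii) has no proof, and the argument does not go through.
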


The Khovanov homology of a split link satisfies a K\"unneth theorem under disjoint unions \cite[Proposition 33]{Khov}. For simplicity, if $R$ is a field, then for any split link $L = L^1 \cup \dots \cup L^k$, we have
\begin{equation} \label{eq: Kh-tensor}
\Kh(L;R) \cong \Kh(L^1;R) \otimes_R \dots \otimes_R \Kh(L^k;R).
\end{equation}
Similarly, for a split cobordism $C = C^1 \cup \dots \cup C^k$ between split links $L_0 = L_0^1 \cup \dots \cup L_0^k$ and $L_1 = L_1^1 \cup \dots \cup L_1^k$, the induced maps respect the tensor product structure, in the sense that
\begin{equation} \label{eq: Kh-cob-tensor}
\Kh(C) = \pm \Kh(C^1) \otimes \dots \otimes \Kh(C^k).
\end{equation}
Our main technical result is that \eqref{eq: Kh-cob-tensor} holds not only for a split cobordism but also for an arbitrary partition-preserving cobordism between split links. If two such cobordisms are partition-homotopic, equation \eqref{eq: Kh-cob-tensor} coupled with isotopy invariance of Khovanov maps then implies Theorem \ref{thm: main}. (Over an arbitrary ring, there are also Tor terms in \eqref{eq: Kh-tensor}, but the same argument goes through at the chain complex level.)

One consequence of Theorem \ref{thm: main} concerns cobordisms with closed components:
\begin{corollary} \label{cor: closed-component}
Let $C \co L_0 \to L_1$ be any cobordism in $\R^3 \times [0,1]$. Let $S$ be a closed, connected, oriented surface in the complement of $C$; it may be nontrivially linked with $C$, and may be dotted. Then:
\begin{itemize}
\item If $S$ is an undotted sphere, then $\Kh(C \cup S) = 0$.

\item If $S$ is a dotted sphere, then $\Kh(C \cup S) = \pm \Kh(C)$.

\item If $S$ is an undotted torus, then $\Kh(C \cup S) = \pm 2 \Kh(C)$.

\item If $g(S)>1$, or if $S$ is dotted and $g(S)>0$, then $\Kh(C \cup S)=0$.
\end{itemize}
\end{corollary}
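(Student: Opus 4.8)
The plan is to isolate the two ways $S$ can interact with the picture --- its four-dimensional linking with $C$, and its own possible knotting --- and to dispose of them separately: the linking via the main technical result (the extension of \eqref{eq: Kh-cob-tensor} to partition-preserving cobordisms), and the knotting via quantum-degree bounds in all but two cases. Since $S$ is compact and disjoint from $C$, and the complement of $C$ in $\R^3\times[0,1]$ is open and connected, an ambient isotopy carries $S$ into $B^2\times(t_0,t_1)$ for some $3$-ball $B^2\subset\R^3$ and $0<t_0<t_1<1$ with $(B^2\times[t_0,t_1])\cap C=\emptyset$. Then $C\cup S$ is a partition-preserving cobordism from $L_0$ to $L_1$ for the (degenerate) two-part splittings $L_0=L_0\cup\emptyset$ and $L_1=L_1\cup\emptyset$, with $L_0,L_1$ placed in one $3$-ball and $\emptyset$ in a disjoint one, and with parts $C^1=C$, $C^2=S$. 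The partition-preserving form of \eqref{eq: Kh-cob-tensor} then gives $\Kh(C\cup S)=\pm\,\Kh(C)\otimes\Kh(S)$, and since $S$ is closed, $\Kh(S)$ is an $R$-linear endomorphism of $\Kh(\emptyset)\cong R$, i.e.\ multiplication by a scalar $\lambda_S\in R$; hence $\Kh(C\cup S)=\pm\lambda_S\,\Kh(C)$. (No Tor terms intervene, since $\Kh(\emptyset)$ is free of rank one.) It remains to identify $\lambda_S$ and to see that it depends only on $g(S)$ and on whether $S$ is dotted.

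Next I would invoke gradings. The map induced by a cobordism $\Sigma$ carrying $d$ dots is homogeneous of quantum degree $\chi(\Sigma)-2d$; for the closed surface $S$ this equals $2-2g(S)-2d$ with $d\in\{0,1\}$, and since $\Kh(S)$ maps the quantum-degree-$0$ module $\Kh(\emptyset)$ to itself, $\lambda_S=0$ whenever $g(S)+d\neq 1$. This disposes of the undotted sphere, every surface with $g(S)>1$, and every dotted surface with $g(S)>0$, leaving only the undotted torus (where one needs $\lambda_S=\pm 2$) and the dotted sphere (where one needs $\lambda_S=\pm 1$). For those, pick a Morse function on $S$ with a single maximum, slide the dot (if any) into a standardly embedded disk $D$ near the maximum, and write $S=D\cup S'$, where $S'$ is a connected, undotted, possibly knotted cobordism from $\emptyset$ to an unknot $U$ with $g(S')=g(S)$. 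Then $\Kh(S')(1)\in\Kh(U)=R\langle\mathbf 1,x\rangle$ is homogeneous of quantum degree $\chi(S')=1-2g(S)$, hence equals $a\,\mathbf 1$ in the dotted-sphere case and $b\,x$ in the undotted-torus case; composing with the standard cap map $\Kh(D)$ (dotted in the sphere case, sending $\mathbf 1\mapsto 1$; undotted in the torus case, sending $x\mapsto 1$) yields $\lambda_S=a$, resp.\ $\lambda_S=b$.

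The remaining, and I expect the only substantive, task is to show $a=\pm 1$ and $b=\pm 2$. When $S$ (equivalently $S'$) is unknotted these are just the standard values of a cap and of a genus-increasing handle, read off from the Frobenius algebra $R[x]/(x^2)$; in general one must rule out that the knotting of the disk $S'$, resp.\ of the genus-one surface $S'$, alters them, and here the grading bound of the previous step is not sharp. I therefore expect this to be the heart of the proof and to need an input beyond gradings: plausibly one passes to a filtered deformation of Khovanov homology over $\Q$ (Lee's or Bar-Natan's) and uses the nondegeneracy of connected cobordism maps on the canonical generators to force the filtered map of a disk bounding the unknot to act by $\pm 1$ on the top filtration level and that of a connected genus-one surface bounding the unknot to act by $\pm 2$, then transports this back through the grading-respecting comparison with the integral theory. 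Once $\lambda_S\in\{0,\pm 1,\pm 2\}$ is determined by $g(S)$ and the dot as above, substituting into $\Kh(C\cup S)=\pm\lambda_S\,\Kh(C)$ gives exactly the four bulleted statements.
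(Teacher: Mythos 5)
Your argument follows essentially the same route as the paper: apply the main theorem, in its tensor-product form for partition-preserving cobordisms between split links (with the degenerate splittings $L_i = L_i \cup \emptyset$), to factor $\Kh(C\cup S)$ as $\pm\,\KJ(S)\,\Kh(C)$, and then evaluate the scalar $\KJ(S)$. Two remarks. First, your opening topological claim --- that because the complement of $C$ in $\R^3\times[0,1]$ is open and connected, an ambient isotopy carries $S$ into a product region $B^2\times[t_0,t_1]$ disjoint from $C$ --- is false in general: $S$ may be nontrivially linked with $C$, and if it could always be isotoped into such a region the corollary would follow from isotopy invariance of the unlinked case alone, with no need for Theorem \ref{thm: main}. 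Fortunately you never use this claim: being partition-preserving (as opposed to split) imposes no positioning requirement on the parts, so $C\cup S$ with parts $C^1=C$, $C^2=S$ qualifies automatically. That sentence should simply be deleted. Second, what you correctly identify as the heart of the matter --- that the Khovanov--Jacobsson number of a possibly knotted closed connected surface depends only on its genus and dot, taking the values $\pm 1$ for a dotted sphere and $\pm 2$ for an undotted torus --- is exactly the theorem of Rasmussen and Tanaka that the paper cites rather than reproves. Your quantum-degree reductions are correct and dispose of all the vanishing cases, and the Lee/Bar-Natan deformation strategy you sketch for the two remaining cases is indeed how Rasmussen's argument goes, but as written that step is a sketch, not a proof. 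With the citation substituted for the sketch, your argument coincides with the paper's.
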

Each of these statements was previously known in the case where $S$ is unlinked from $C$ (i.e., contained in a $4$-ball disjoint from $C$), by results of Rasmussen \cite{RasmussenClosed} and Tanaka \cite{tanaka}. We deduce the general statement by applying Theorem \ref{thm: main} to see that the map is unchanged when we pull $S$ off of $C$. Thus we should interpret these statements as completing a $4$-dimensional lift of Bar-Natan's famous sphere, torus and dotted sphere relations (see Figures \ref{fig: local-nodots} and \ref{fig: local-relations} below).

\subsection{Applications to ribbon concordance}

One application of Theorem \ref{thm: main}, which was the original motivation behind this project, concerns \emph{strongly homotopy-ribbon concordance}. A \emph{concordance} from $L_0$ to $L_1$ is a cobordism $C \subset \R^3 \times [0,1]$ consisting of disjoint annuli $C^1, \dots, C^k$, each connecting a component of $L_0$ to a component of $L_1$.
Following \cite{MillerZemkeHomotopyRibbon}, a concordance $C$ is called:
\begin{itemize}
\item \emph{ribbon} if projection onto the $[0,1]$ factor, restricted to $C$, is a Morse function with only index $0$ and $1$ critical points;

\item \emph{strongly homotopy-ribbon} if the complement of $C$ in $S^3 \times [0,1]$ can be built from $(S^3 \minus \nbd(L_0)) \times [0,1]$ by adding $4$-dimensional $1$- and $2$-handles only;

\item \emph{homotopy-ribbon} if the induced map $\pi_1(S^3 \minus L_0) \to \pi_1((S^3 \times [0,1]) \minus C)$ is injective and the induced map $\pi_1(S^3 \minus L_1) \to \pi_1((S^3 \times [0,1]) \minus C)$ is surjective.
\end{itemize}
Here, we have implicitly identified $S^3$ with $\R^3 \cup \{\infty\}$. Gordon \cite{GordonRibbon} showed that the implications
\[
\text{ribbon} \Rightarrow \text{strongly homotopy-ribbon} \Rightarrow \text{homotopy-ribbon}
\]
both hold. (Showing injectivity on $\pi_1$ is the most difficult piece of the argument; it relies on major theorems of Gerstenhaber--Rothaus \cite{GerstenhaberRothaus} and Thurston \cite{Thurston3ManifoldKleinian}.) It is unknown whether the reverse implications hold, however. Furthermore, Gordon conjectured (and proved under some added hypotheses) that if there are ribbon concordances from $K_0$ to $K_1$ and from $K_1$ to $K_0$ (where $K_0$ and $K_1$ are knots), then $K_0$ and $K_1$ must in fact be isotopic. Philosophically, if there is a ribbon concordance from $L_0$ to $L_1$, one expects $L_0$ to be ``no more complicated'' than $L_1$, as measured by a variety of invariants.

In 2019, Zemke \cite{ZemkeRibbon} proved that any ribbon concordance induces an injection on knot Floer homology. Shortly thereafter, Zemke and the second author \cite{LevineZemke} proved an analogous result for Khovanov homology, using a key topological lemma proved in \cite{ZemkeRibbon} in conjunction with some of the formal properties for Khovanov homology established by Bar-Natan \cite{BarNatanTangles}. A few months later, Miller and Zemke \cite{MillerZemkeHomotopyRibbon} extended Zemke's original result for knot Floer homology to the \emph{a priori} weaker hypothesis of a strongly homotopy-ribbon concordance. Corollary \ref{cor: closed-component} allows us to obtain the corresponding result in the Khovanov setting:

\begin{theorem} \label{thm: SHR}
Let $S\co L\to L'$ be a strongly homotopy-ribbon concordance. Then the map induced on Khovanov homology $\Kh(S)$ is injective.
\end{theorem}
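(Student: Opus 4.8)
The plan is to adapt the doubling argument used by Levine and Zemke \cite{LevineZemke} in the ribbon case, substituting Corollary \ref{cor: closed-component} for their use of the unlinkedness of certain $2$-spheres. Let $\bar S\co L'\to L$ be the reverse concordance of $S$, and consider the composite $\bar S\circ S\co L\to L$. Since Khovanov cobordism maps are functorial under composition up to an overall sign, $\Kh(\bar S\circ S)=\pm\,\Kh(\bar S)\circ\Kh(S)$, so it is enough to prove that $\Kh(\bar S\circ S)=\pm\id_{\Kh(L)}$; this exhibits $\pm\Kh(\bar S)$ as a left inverse of $\Kh(S)$ and forces $\Kh(S)$ to be injective.

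The first step is topological. I would invoke the handle-cancellation argument of Miller and Zemke \cite{MillerZemkeHomotopyRibbon} (which extends Zemke \cite{ZemkeRibbon}), after checking that it applies to links and not merely knots. One builds the exterior of $\bar S\circ S$ by stacking the exterior of $S$, which is obtained from $(S^3\minus\nbd(L))\times[0,1]$ using only $1$- and $2$-handles, on the exterior of $\bar S$, whose handle decomposition --- gotten by turning that of $S$ upside down --- uses only $2$- and $3$-handles; cancelling handles in the resulting decomposition shows that $\bar S\circ S$ is isotopic rel boundary to the concordance obtained from the product concordance $L\times[0,1]$ by forming ambient connected sums with finitely many $2$-spheres $\Sigma_1,\dots,\Sigma_n$ lying in its complement. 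In contrast to the ribbon case, these spheres may be knotted and may link one another as well as $L\times[0,1]$ --- which is exactly the situation Corollary \ref{cor: closed-component}, and behind it Theorem \ref{thm: main}, is built to handle.

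The second step evaluates $\Kh$ of this surface. I would cut the $n$ connecting tubes one at a time with the neck-cutting relation (see \cite{BarNatanTangles}), which replaces a tube by a sum of two cobordisms in which each end has been capped off and one of the two capping disks carries a dot. When the tube joining $\Sigma_i$ to the rest is cut, the summand whose dot lies on the side of $L\times[0,1]$ contains $\Sigma_i$ as a disjoint, undotted $2$-sphere and so vanishes by the first bullet of Corollary \ref{cor: closed-component}; the summand whose dot lies on $\Sigma_i$ turns $\Sigma_i$ into a disjoint, dotted $2$-sphere and so, by the second bullet, equals --- up to sign --- the map induced by the surface with $\Sigma_i$ deleted. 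Both bullets are being applied to spheres that may link the ambient cobordism, which is precisely the generality supplied by Theorem \ref{thm: main}. Iterating over $i=1,\dots,n$ strips off every sphere and leaves $\Kh(\bar S\circ S)=\pm\,\Kh(L\times[0,1])=\pm\id_{\Kh(L)}$. Only the sphere relations are used, so no denominators appear and the argument is valid over an arbitrary coefficient ring.

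I expect the genuine difficulty to be confined to the first, topological, step; the Khovanov-theoretic bookkeeping afterwards is routine. One must confirm that the Miller--Zemke handle argument goes through for strongly homotopy-ribbon concordances of links, that the resulting ambient isotopy can be taken rel boundary, and that the closed surfaces that appear really are $2$-spheres ambient-connect-summed onto $L\times[0,1]$ --- the last point being forced by $\bar S\circ S$ being a concordance, hence a disjoint union of annuli, so that no positive-genus closed pieces can occur. Once that lemma is in place, the computation above with the neck-cutting relation and Corollary \ref{cor: closed-component} completes the proof.
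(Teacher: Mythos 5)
Your overall strategy is the same as the paper's: double the concordance, use the Miller--Zemke handle structure on the exterior of $\overline{S}\circ S$ to relate the double to the product concordance via spheres attached by tubes, and then use neck-cutting together with Corollary \ref{cor: closed-component} (whose whole point is that the sphere relations hold even for spheres linking the cobordism) to conclude $\Kh(\overline{S})\circ\Kh(S)=\pm\id$. The Khovanov-theoretic bookkeeping in your second step matches the paper's.

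The one substantive issue is that you have stated the topological lemma in the wrong direction. You claim that $D:=\overline{S}\circ S$ is isotopic rel boundary to the surface obtained from $L\times[0,1]$ by tubing on spheres $\Sigma_1,\dots,\Sigma_n$; that is the form of Zemke's lemma in the genuinely \emph{ribbon} case, where one has an explicit movie for $S$. What the Miller--Zemke handle-cancellation argument actually yields in the strongly homotopy-ribbon setting (and what the paper's Lemma \ref{lemma: MZ} records) is the reverse: there are disjoint spheres $S_1,\dots,S_k$ in the complement of $D$ and tubes joining them to $D$ such that the surface $D'$ obtained by tubing the spheres \emph{onto} $D$ is isotopic rel boundary to $L_0\times I$. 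One cannot simply invert this to recover your statement, since undoing the tubing would require finding compressing disks for tubes over which one has no control; and your heuristic that the closed pieces ``must be spheres because $D$ is a union of annuli'' only constrains the genus of closed pieces \emph{if} such a decomposition of $D$ exists, which is exactly what is not provided. Fortunately your computation adapts immediately to the correct direction: apply neck-cutting to the tubes of $D'$, observe that every summand containing an undotted $S_i$ vanishes and the all-dotted summand equals $\pm\Kh(D)$ by Corollary \ref{cor: closed-component}, and compare with $\Kh(D')=\pm\id_{\Kh(L)}$ coming from isotopy invariance. With the lemma restated this way (and with the observation that isotopy invariance of the maps in $S^3\times I$ rather than $\R^3\times I$ is needed, which is supplied by Morrison--Walker--Wedrich), your argument becomes the paper's proof.
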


The definition of strongly homotopy-ribbon is purely in terms of the topology of the complements of the links and the concordance, with no reference to link diagrams or movies. In contrast, Khovanov homology is defined entirely in terms of the combinatorics of link diagrams, and its connection to the topology of link complements is still largely mysterious. Theorem \ref{thm: SHR} is thus an unusual instance in which a non-diagrammatic property has implications for Khovanov homology. Furthermore, Theorem \ref{thm: SHR} implies that the various corollaries presented in \cite{LevineZemke} also apply for a strongly homotopy-ribbon concordance.

We also prove an additional, purely topological result concerning ribbon concordance of links:

\begin{theorem} \label{thm: ribbon-split}
If $L$ is strongly homotopy-ribbon concordant to $L'$, and $L'$ is split, then $L$ is split. More precisely, if there is a $2$-sphere in $S^3 \minus L'$ separating two components of $L'$, then there is a $2$-sphere in $S^3 \minus L$ separating the corresponding components of $L$.
\end{theorem}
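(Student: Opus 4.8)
The plan is to translate the conclusion into a statement about fundamental groups and then exploit the algebraic constraints forced by the handle structure of the concordance complement. Write $\pi(K) = \pi_1(S^3 \smallsetminus K)$, let $L = L^1 \cup \dots \cup L^n$ and $L' = L'_1 \cup \dots \cup L'_n$, and reindex the concordance as $C = C^1 \cup \dots \cup C^n$ with $C^k$ running from $L^k$ to $L'_k$. I would begin with two elementary reductions. First, a $2$-sphere in $S^3 \smallsetminus K$ that separates the components of $K$ into two nonempty groups is automatically a splitting sphere for $K$: by the Jordan--Brouwer theorem it bounds a $3$-ball on each side in $S^3$, and each component of $K$ lies in one of these balls. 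So it suffices to produce, in $S^3 \smallsetminus L$, a $2$-sphere separating the appropriate boundary tori. Second, by the Kneser conjecture for compact $3$-manifolds (Stallings, Heil), together with the standard facts that an element of a free product conjugate into each factor is trivial and that meridians are nontrivial, it is enough to exhibit a nontrivial free product decomposition $\pi(L) = P * Q$ in which the meridian of $L^k$ is conjugate into $P$ for $k$ in the prescribed subset $S$ and into $Q$ for $k \notin S$. Indeed, Kneser then yields a connected-sum decomposition $S^3 \smallsetminus \nbd(L) = M_P \# M_Q$ realizing $P * Q$; the meridional condition forces the boundary torus of each $L^k$ into $M_P$ or $M_Q$ according to whether $k \in S$; and the connecting sphere is the sphere we want.

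For the main construction, set $W = (S^3 \times [0,1]) \smallsetminus \nbd(C)$. Because the concordance is strongly homotopy-ribbon, $W$ is obtained from $(S^3 \smallsetminus \nbd(L)) \times [0,1]$ by attaching $4$-dimensional $1$- and $2$-handles, and dually from $(S^3 \smallsetminus \nbd(L')) \times [0,1]$ by attaching $2$- and $3$-handles. Hence $\pi_1(W)$ is the quotient of $\pi(L')$ by the normal closure of the attaching circles $\gamma_1, \dots, \gamma_b \subset S^3 \smallsetminus \nbd(L')$ of the dual $2$-handles (the dual $3$-handles do not affect $\pi_1$); this is the surjectivity half of the homotopy-ribbon condition. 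By Gordon's theorem the inclusion induces an injection $\pi(L) \hookrightarrow \pi_1(W)$; and since the annulus $C^k$ carries a meridian that restricts on its two ends to the meridians of $L^k$ and of $L'_k$, those two meridians have conjugate images in $\pi_1(W)$ for every $k$.

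Now let $\Sigma' \subset S^3 \smallsetminus \nbd(L')$ be the given sphere, with the boundary tori of $\{L'_k : k \in S\}$ on one side; van Kampen gives $\pi(L') = A' * B'$, with the meridians of the first group conjugate into $A'$ and those of the second into $B'$. The crux is the claim that the $\gamma_j$ may be chosen to respect this splitting --- that is, each $\gamma_j$ can be isotoped in $S^3 \smallsetminus \nbd(L')$ off $\Sigma'$, hence into one side, hence is conjugate into $A'$ or into $B'$. Granting this, $\pi_1(W)$ inherits a free product decomposition $\bar A * \bar B$ with $\bar A, \bar B$ the images of $A', B'$, and the two groups of meridians of $L'$ --- hence, by the previous paragraph, those of $L$ --- remain conjugate into $\bar A$ and $\bar B$ respectively. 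Applying the Kurosh subgroup theorem to $\pi(L) \le \bar A * \bar B$ exhibits $\pi(L)$ as a free product of a free group, of subgroups each conjugate into $\bar A$, and of subgroups each conjugate into $\bar B$; the meridian of $L^k$ for $k \in S$ is then conjugate, within $\pi(L)$, into one of the factors of the first kind, and for $k \notin S$ into one of the second kind. Grouping the free group together with the $\bar A$-type factors into a subgroup $P$, and the $\bar B$-type factors into a subgroup $Q$, produces $\pi(L) = P * Q$ with $P$ and $Q$ nontrivial (each contains a nontrivial meridian), which is exactly the decomposition called for in the first reduction.

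The step I expect to be the main obstacle is precisely this claim that the dual $2$-handle attaching circles respect $\Sigma'$. Homologically there is nothing to do: $[\Sigma']$ maps to $0$ in $H_2(S^3 \smallsetminus \nbd(L'),\, \partial;\, \mathbb{Z})$ --- for a link exterior the map $H_2(\cdot) \to H_2(\cdot,\partial)$ vanishes --- so each $\gamma_j$ has algebraic intersection number $0$ with $\Sigma'$. Cancelling the geometric intersection points cannot be done by homology or general position alone, and this is where the strongly homotopy-ribbon hypothesis must genuinely enter, via Gordon's injectivity theorem (which itself relies on Gerstenhaber--Rothaus and Thurston) to control the fundamental-group obstructions to the required isotopies and handle slides. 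There is also a more geometric variant of the whole argument --- push $\Sigma'$ into $\intt W$, isotope it off the cocores of the $1$- and $2$-handles so that it lands in $W$ minus those cocores (which deformation retracts onto $(S^3 \smallsetminus \nbd(L)) \times \{0\}$), and then check, using uniqueness of prime decompositions (Milnor), that the resulting sphere in $S^3 \smallsetminus \nbd(L)$ still separates the correct tori --- but the same difficulty reappears there as the problem of clearing the codimension-two cocores of the $2$-handles.
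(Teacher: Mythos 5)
Your argument has a genuine gap, and it sits exactly where you place it: the claim that the attaching circles $\gamma_1,\dots,\gamma_b$ of the dual $2$-handles can be isotoped off $\Sigma'$, so that each relator is conjugate into $A'$ or $B'$. Nothing in the strongly homotopy-ribbon hypothesis gives you this. The hypothesis tells you that the complement $W$ has a handle decomposition of the stated form, but it places no constraint on how the attaching circles of the dual $2$-handles sit relative to a splitting sphere for $L'$; and a quotient of a free product by the normal closure of elements that are \emph{not} conjugate into the factors need not be a free product at all, so without this step the decomposition $\pi_1(W)=\bar A*\bar B$ does not exist and the Kurosh argument never gets off the ground. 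Gordon's injectivity theorem, which you invoke as the mechanism that should ``control the fundamental-group obstructions,'' gives you only that $\pi(L)\hookrightarrow\pi_1(W)$; it says nothing about free product structure on the target. (As you note, algebraic intersection number zero with $\Sigma'$ is automatic and useless here; the geometric cancellation is the whole problem, and I do not see how to do it. The same obstruction reappears, as you observe, in the geometric variant of pushing $\Sigma'$ past the cocores of the $2$-handles.) There is also a secondary issue you gloss over: the Kneser conjecture for bounded $3$-manifolds requires care when the boundary is compressible, which is exactly the situation for a split link exterior, though this reduction could probably be patched.

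For comparison, the paper proves this theorem by an entirely different route that avoids $3$-manifold group theory: Theorem \ref{thm: SHR} shows $\Kh(C)$ is a left-invertible, $R$-linear map for $R=\F[X,Y]/(X^2,Y^2)$ (with the module structure given by dotted cylinders at basepoints on the two relevant components), so $\Kh(L;\F)$ is a direct summand of the free $R$-module $\Kh(L';\F)$, hence projective, hence free since $R$ is local; the Lipshitz--Sarkar detection theorem then produces the splitting sphere for $L$. The hard topology is thus outsourced to Lipshitz--Sarkar (and, in the alternate proof of Remark \ref{rmk: ribbon-split-HF}, to Heegaard Floer homology of branched double covers), rather than attacked directly through the handle structure of the concordance complement. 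If you want to complete a proof along your lines, you would need a new idea to handle the relator-placement step, and I am not aware of one.
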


This follows from the injectivity results mentioned above (\cite{LevineZemke} for ribbon concordance, Theorem \ref{thm: SHR} for strongly homotopy-ribbon concordance) along with recent work by Lipshitz and Sarkar \cite{LipSar} showing that Khovanov homology detects split links. We also give a second proof using Heegaard Floer homology, making use of a similar injectivity result due to Daemi, Lidman, Vela-Vick, and Wong \cite{DaemiLidmanVelaVickWong}. This result provides another example of nondecreasing simplicity under ribbon concordance, as mentioned above.

\subsection{Batson--Seed homology}

The proof of Theorem \ref{thm: main} makes use of a variant of Khovanov homology due to Batson and Seed \cite{BatsonSeed}. Let $R$ be a commutative ring with unit. A \emph{weighted link} is a link $L$ along with a choice of element of $R$ associated to each component of $L$; call this data $w$. Given a weighted link $(L,w)$, Batson and Seed defined a perturbation of the differential on the Khovanov complex, leading to a homology theory which we call $\KhBS(L,w)$. The Batson--Seed complex is filtered, with associated graded complex equal to the original Khovanov complex (with coefficients in $R$); thus, there is a spectral sequence from $\Kh(L;R)$ to $\KhBS(L,w)$. If $L$ is a totally split link (or, more generally, a partially split link in which the weights of all components of each part agree), then the spectral sequence collapses, and $\KhBS(L,w) \cong \Kh(L;R)$. The key property of the Batson--Seed complex is that up to isomorphism it is unchanged (up to an overall grading shift) upon reversing crossings between strands the difference of whose weights is invertible in $R$. Thus, if all such differences are invertible, the invariant $\KhBS(L,w)$ depends only on the Khovanov homology of the individual components of $L$.

The bulk of this paper is devoted to developing a theory of tangle invariants, cobordism maps, and functoriality for Batson--Seed homology. A \emph{weighted cobordism} between weighted links is a link cobordism along with a choice of weights on each component, agreeing with the weights on the boundary. We prove:

\begin{theorem} \label{thm: BS-func}
Let $(L_0, w_0)$ and $(L_1, w_1)$ be weighted links.
\begin{enumerate}
\item \label{item: BS-func-maps}
Any weighted cobordism $(C,w)$ from $(L_0, w_0)$ to $(L_1,w_1)$ induces a filtered chain map on Batson--Seed complexes, giving an induced homomorphism
\[
\KhBS(C,w) \co \KhBS(L_0,w_0) \to \KhBS(L_1,w_1).
\]
The induced map on associated graded objects agrees with $\Kh(C) \co \Kh(L_0;R) \to \Kh(L_1;R)$.

\item \label{item: BS-func-isotopy}
If $C$ and $D$ are isotopic rel boundary and are equipped with the same weighting, then the filtered chain maps associated to $C$ and $D$ are filtered chain homotopic up to an overall sign. Thus, $\KhBS(C,w)$ and $\KhBS(D,w)$ agree up to sign, as do the induced morphisms of spectral sequences.

\item \label{item: BS-func-homotopy}
If $C$ and $D$ are partition-homotopic and are equipped with the same weighting, and all differences of weights between distinct components are invertible in $R$, then the chain maps associated to $C$ and $D$ are chain homotopic up to multiplication by a unit in $R$, although not necessarily in a filtered sense. Thus, the induced maps $\KhBS(L_0,w_0) \to \KhBS(L_1,w_1)$ agree up to a unit in $R$.
\end{enumerate}
\end{theorem}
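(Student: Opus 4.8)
The plan is to prove Theorem~\ref{thm: BS-func} by building Batson--Seed analogues of the tangle-cobordism formalism that Bar-Natan developed for Khovanov homology, and then deducing the three parts essentially from general properties of that formalism together with the crossing-change invariance of $\KhBS$. The starting point is the observation that the Batson--Seed differential is a perturbation of the Khovanov differential by terms that decrease the filtration, so for \emph{any} diagrammatic building block --- cup, cap, saddle, Reidemeister moves, movie moves --- the Khovanov cobordism map automatically lifts to a filtered endomorphism of the perturbed complex once one checks that the perturbation terms are compatible with the local relations. Concretely, I would set up a category of weighted tangles and weighted tangle cobordisms, assign to each elementary cobordism the evident filtered map (Khovanov's map plus the Batson--Seed correction on the relevant local piece), and verify that the filtered chain-homotopy-equivalence class of the composite is a weighted-isotopy invariant.

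For part~\eqref{item: BS-func-maps}, decompose the cobordism $C$ into a movie of elementary cobordisms, define $\KhBS(C,w)$ as the composite of the associated filtered maps, and observe that passing to the associated graded recovers precisely Khovanov's decomposition of $\Kh(C)$ into elementary pieces; this gives the identification of the induced map on associated graded objects with $\Kh(C)$. For part~\eqref{item: BS-func-isotopy}, I would replay the movie-move argument: two movies presenting isotopic (rel boundary) cobordisms differ by a finite sequence of Carter--Saito movie moves, each of which holds for Khovanov homology up to sign by Bar-Natan's and Morrison--Walker-type arguments; the key point is that each such local relation, being a statement about a small tangle, lifts to the filtered setting because the Batson--Seed perturbation is supported locally and the correction terms satisfy the same relations (birth/death, neck-cutting, four-tube, $S$/$T$ relations) up to filtered homotopy. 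This yields that $C$ and $D$ induce filtered chain-homotopic maps up to sign.

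Part~\eqref{item: BS-func-homotopy} is where the extra hypothesis on invertible weight differences enters, and this is the step I expect to be the main obstacle. The idea is that a partition-homotopy between $C$ and $D$ can be realized by a movie in which, in addition to the usual Reidemeister and movie moves (which affect a single part, hence correspond to an honest isotopy of that part), one is allowed to pass a strand of one part through a strand of another part --- i.e., perform a crossing change between strands lying in different parts. By the key invariance property of Batson--Seed homology quoted in the introduction, reversing a crossing between two strands whose weights differ by a unit in $R$ induces an isomorphism of complexes (up to an overall grading shift); crucially this isomorphism is \emph{not} filtered. So the composite map changes only by such isomorphisms, hence by multiplication by a unit, but the filtration is destroyed. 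To make this precise I would need: (i) a lemma that any two partition-homotopic cobordisms between the same weighted links are related by a sequence of ordinary isotopies of the parts together with inter-part crossing changes --- this is a standard general-position argument, since a generic homotopy of a partition-preserving cobordism has only finitely many moments where distinct parts intersect transversely in a point, each of which is a crossing change; (ii) the explicit chain-level isomorphism realizing crossing-change invariance from \cite{BatsonSeed}, checked to commute (up to homotopy and up to a unit) with the elementary cobordism maps away from the crossing being changed, so that it can be inserted into the movie without disturbing the rest of the composite. Assembling (i) and (ii) gives that $\KhBS(C,w)$ and $\KhBS(D,w)$ agree up to a unit in $R$, as claimed; the overall sign and grading-shift bookkeeping is routine once the structural statements are in place.
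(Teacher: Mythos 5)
For parts \eqref{item: BS-func-maps} and \eqref{item: BS-func-isotopy} your strategy coincides with the paper's: define the maps locally on elementary cobordisms and verify the Carter--Saito movie moves in the filtered setting. Two points you treat as automatic are not. First, the Reidemeister 3 homotopy equivalence does \emph{not} lift to the perturbed complex for free: Bar-Natan's map $f_0$ fails to commute with $d_-$, and one must add a correction term $f_1$ that shifts the homological grading by $-2$ (this is precisely why morphisms in $\CKom$ are allowed such shifts). Second, the circular movie moves MM6--MM10 are not verified by checking that ``the correction terms satisfy the same local relations,'' but by showing that trivial tangles are $BS$-simple (Lemma \ref{lemma: trivial-tangle}), so that any degree-$0$, filtered self-homotopy-equivalence is homotopic to $\pm\id$. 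Both points are fixable, and your overall route here is the paper's.

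The genuine gap is in part \eqref{item: BS-func-homotopy}, at your step (ii). You propose to insert the crossing-change isomorphisms at the finitely many double points of a generic partition-homotopy, and you only require them to commute with elementary cobordism maps \emph{away from the crossing being changed}. But the inter-part crossings you must change are created and destroyed by Reidemeister 2 moves, and migrate through Reidemeister 3 moves, \emph{within} the movie; so you unavoidably have to compare $BS(e_\sigma)\circ CC$ with $CC\circ BS(e)$ when the changed crossing lies \emph{inside} the region of interest of the Reidemeister move. This is the paper's Lemma \ref{lemma: CC-reid}, and it is the technical heart of the proof: it is established not by a local computation but by observing that both composites are homotopy equivalences of the same degree between $BS$-simple complexes, hence agree up to a unit (Lemma \ref{lemma: simple-complex}). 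Without it your argument does not close up. For comparison, the paper sidesteps any analysis of the homotopy itself: starting from a single movie for $C$, it changes \emph{all} inter-part crossings in \emph{every} frame so as to vertically separate the parts (Theorem \ref{thm: sep-tangle}), reduces to a split cobordism whose map is the tensor product of the maps induced by the parts, and then only uses that the corresponding parts of $C$ and $D$ are isotopic rel boundary. Your localized approach could probably be completed, but it needs the same key lemma, plus a careful translation of a $4$-dimensional transverse double point of the homotopy into a diagrammatic modification of movies, which you assert as routine but do not supply.
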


En route to proving this theorem, we also develop an analogue of the Batson--Seed complex for tangles, taking values in a category of formal curved chain complexes of diagrams, along the lines of Bar-Natan's ``tangles and cobordisms'' story for Khovanov homology \cite{BarNatanTangles}. (Batson and Seed alluded to the possibility of a such a construction in their original paper.) In the case of a cobordism between split links, the proof of Theorem \ref{thm: main} then follows from studying how the crossing change isomorphisms interact with the cobordism maps, using the fact that the Khovanov and Batson--Seed invariants agree for split links.

\begin{remark}
We briefly discuss the ``up to sign'' provisos in the theorems above. Originally, the cobordism maps on Khovanov homology were only shown to be isotopy-invariant up to an overall sign \cite{KhovanovCobordism, JacobssonCobordisms, BarNatanTangles}. Subsequent work by Caprau \cite{CaprauCobordisms} and Clark--Morrison--Walker \cite{ClarkMorrisonWalkerFunctoriality} modified the construction of the cobordism maps so as to eliminate the sign ambiguity. Both of these approaches require expanding the coefficient ring to include $i$ (the primitive fourth root of unity) and keeping track of more topological data than just oriented cobordisms. (A recent preprint by Sano \cite{SanoFunctoriality} provides an alternate approach that requires adjusting the signs of the maps.) Because the injectivity statement of Theorem \ref{thm: SHR} does not require pinning down the sign, we opted to stick with the simpler framework from \cite{BarNatanTangles}, at the cost of maintaining the sign indeterminacy.
\end{remark}

\subsection*{Organization}

In Section \ref{sec: algebra}, we describe the algebraic setup that will be used throughout the paper. In Section \ref{sec: BScomplex}, we use this structure to define a version of the Batson--Seed complex for tangles. Sections \ref{sec: reidemeister} and \ref{sec: cobordism-maps} establish the invariance and functoriality of this theory, and Section \ref{sec: crossing-change} describes the analogue of Batson--Seed's crossing change isomorphisms. In Section \ref{sec: main-theorem}, we then assemble these ingredients to prove Theorems \ref{thm: main} and \ref{thm: BS-func}, along with some generalizations to tangles. Finally, we prove the applications to concordance (Theorems \ref{thm: SHR} and \ref{thm: ribbon-split}) in Section \ref{sec: SHR}.

\subsection*{Acknowledgments}

The authors are deeply grateful to Nathan Dowlin, Mikhail Khovanov, Gage Martin, Maggie Miller, Sucharit Sarkar, Radmila Sazdanovic, and Ian Zemke for helpful conversations, as well as to the referee for a very careful reading of our paper and insightful suggestions.

The first author was supported in summer 2019 by Duke University's Program for Research for Undergraduates (PRUV). He thanks David Kraines for organising the PRUV program. He also thanks Lenhard Ng for serving on his senior thesis committee and for his valuable comments on an early draft of his senior thesis.

The second author was partially supported by NSF Topology grant DMS-1806437.

\section{Preliminaries} \label{sec: algebra}

\subsection{The Batson--Seed complex of a link} \label{ssec: BSlink}

We begin with a brief structural summary of the original Batson--Seed construction. We assume that the reader is familiar with the basics of Khovanov homology; see \cite{BarNatanCategorification} for an accessible overview.

Let $L$ be a $k$-component, oriented link diagram with $n$ crossings. Throughout the paper, let $R$ be a commutative ring with unit, and let $w = (w_1, \dots, w_k)$ be a weight assignment of an element of $R$ to each component of $L$. Let $n_+$ (resp.~$n_-$) denote the number of positive (resp.~negative) crossings. For each $v \in \{0,1\}^n$, let $\abs{v} = \sum_{i=1}^n v_i$, let $L_v$ be the corresponding resolution of $L$ according to the convention shown in Figure \ref{fig: resolutions}, and let $k_v$ be the number of components in $L_v$. Let $V$ denote the $R$-module $R[X]/(X^2)$, graded such that $\deg(1)=1$ and $\deg(X) = -1$. For $v,v' \in \{0,1\}^n$, we say that $v'$ is an \emph{immediate successor} of $v$, written $v \lessdot v'$, if $v'$ is obtained from $v$ by changing a single $0$ to a $1$.

\begin{figure}
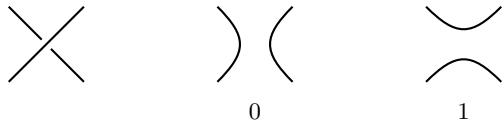

  \centering
  \begin{subfigure}[b]{0.15\textwidth}
    \centering
    \crossing
    \caption*{$ $}
  \end{subfigure}
  \hspace{0.1in}
  \begin{subfigure}[b]{0.15\textwidth}
    \centering
    \vres
    \caption*{$0$}
  \end{subfigure}
  \hspace{0.1in}
  \begin{subfigure}[b]{0.15\textwidth}
    \centering
    \hres
    \caption*{$1$}
  \end{subfigure}
\caption{The $0$- and $1$-resolutions of a crossing.}
\label{fig: resolutions}
\end{figure}

Let $C = \CKh(L;R) = \CKh(L) \otimes_\Z R$, the Khovanov complex of $L$ with coefficients in $R$. As an $R$-module, we have $C = \bigoplus_{v \in \{0,1\}^n} C_v$, where $C_v \cong V^{\otimes k_v}$. The complex $C$ comes equipped with a bigrading $(i,j)$, where:
\begin{itemize}
\item $i$ denotes the homological grading, where the summand $C_v$ sits in grading $\abs{v} - n_-$;

\item $j$ denotes the quantum grading, where the grading on $C_v$ comes from shifting the internal grading on $V^{\otimes k_v}$ by $\abs{v} + n_+ - 2n_-$.
\end{itemize}

Let $d_+ \co C \to C$ denote the standard Khovanov differential, which does not depend on $R$ or $w$. We do not dwell on the definition, other than to note three properties: First, $d_+$ is a sum of terms $d_+^{v,v'} \co C_v \to C_{v'}$ ranging over all $v \lessdot v'$, each determined by the merge and split maps in the Khovanov TQFT. Second, $d_+$ is homogeneous of degree $(+1,0)$ with respect to the $(i,j)$ bigrading. Third, we have $d_+^2=0$.

Batson and Seed defined a second differential $d_- \co C \to C$. This differential is the sum of ``backward'' terms $d_-^{v',v} \co C_{v'} \to C_v$, ranging over all $v \lessdot v'$, each of which is equal to the appropriate merge/split map times $\pm (w_a - w_b)$, where $w_a$ and $w_b$ are the weights of the components of $L$ at the crossing corresponding to the index that changes from $v$ to $v'$. (We will explain the sign conventions below.) This map is homogeneous of degree $(-1,-2)$. Batson and Seed proved that $d_-^2 = 0$ and that $d_- d_+ + d_+ d_- = 0$. It follows that the differential $d = d_+ + d_-$ satisfies $d^2 = 0$. Furthermore, $d$ is homogeneous of degree $-1$ with respect to the grading $l = j-i$, which we will refer to as the \emph{Batson--Seed grading} or $\emph{$l$-grading}$. Observe that the $l$-grading on each summand $C_v$ is obtained by shifting the internal grading on $V^{\otimes k_v}$ by $n_+ - n_-$; we will make ample use of this fact below.

While $d$ is not homogeneous with respect to the $j$ grading, this grading instead gives a filtration on $(C,d)$. The associated graded complex is then simply $(C,d_+)$, the ordinary Khovanov complex over $R$. We thus obtain a spectral sequence whose $E_2$ page is $\Kh(L;R)$ and which converges to $H_*(C,d)$. Batson and Seed prove that the filtered homotopy type of $(C,d)$ is invariant (as a singly-graded $R$-module) under Reidemeister moves; thus, the whole spectral sequence is a link invariant. We denote the total homology of $(C,d)$ by $\KhBS(L,w)$.

Finally, we review the crossing change map \cite[Section 2.4]{BatsonSeed}. Let $L'$ be obtained from $L$ by changing a single crossing $c$, and let $(C',d')$ denote the corresponding Batson--Seed complex, using the same weights $w$. Batson and Seed define a chain map $f \co C \to C'$ associated to the crossing change, which is homogeneous of degree $\pm 2$ with respect to the $l$ grading. If the difference of the weights of the components of $L$ involved at $c$ is invertible in $R$, then $f$ is in fact an isomorphism of chain complexes (with inverse given up to a unit by the reverse crossing change map). If all differences of weights are invertible, it then follows that $\KhBS(L,w)$ does not see the linking between components; it is isomorphic to the Khovanov homology of the totally split link obtained by separating the components of $L$. However, note that the crossing change map $f$ is neither homogeneous nor filtered with respect to the $j$ grading; it includes terms of degrees $-2$, $0$, and $2$. Using the latter property, Batson and Seed were able to use the spectral sequence from $\Kh(L;R)$ to $\KhBS(L,w)$ to obtain lower bounds on the number of crossing changes needed to split $L$.

\subsection{Homological algebra} \label{ssec: homological}

In order to establish functoriality for Batson--Seed homology, we will need a localized version of the theory for tangles, taking values in a category of formal chain complexes akin to Bar-Natan's work in \cite{BarNatanTangles}, but with modifications motivated by the structure of the Batson--Seed complex as described in the previous section. The objects of study will in fact be ``curved'' chain complexes in which $d^2 \ne 0$. Nevertheless, most standard properties from homological algebra continue to hold in this context, as stated below in Remark \ref{rmk: commonfacts}. Throughout let $R$ be a commutative ring with unit.

We begin by reminding the reader of several definitions from \cite{BarNatanTangles} (adapted for an arbitrary base ring $R$).

\begin{definition}
A category $\CC$ is \emph{$R$-pre-additive} if the set of morphisms between any two objects is an $R$-module, with the property that composition of morphisms is $R$-bilinear. We say $\CC$ is \emph{$R$-additive} if, in addition, it has a zero object and a well-defined direct sum operation.
\end{definition}

Note that if a category $\CC$ isn't $R$-pre-additive to begin with, it can be made so by considering instead the category in which the objects are the same as those of $\CC$, but the set of morphisms from the object $A$ to $B$ is given by the free $R$-module generated by $\Mor(A,B)$ of the original category $\CC$, with composition extended bilinearly. The category thus obtained is $R$-pre-additive.

\begin{definition} \label{def: Mat}
Following \cite[Definition 3.2]{BarNatanTangles}, for every $R$-pre-additive-category $\CC$, one can define an $R$-additive category $\Mat(\CC)$ with objects given by formal finite direct sums of objects of $\CC$, and a morphism $f$ from the object $A=\bigoplus _{j=1}^{n}A_j$ to the object $B=\bigoplus _{i=1}^{m}B_i$ given by an $m \times n$ matrix $f$, the $(i,j)$th entry of which is a morphism $A_j\to B_i$ of $\CC$. Composition in $\Mat(\CC)$ is given by matrix multiplication, where instead of multiplying numbers we compose the morphisms in $\CC$. It is easy to check that $\Mat(\CC)$ is $R$-additive, with the zero object given by the empty direct sum.
\end{definition}

\begin{definition} \label{def: graded}
As in \cite[Definition 6.1]{BarNatanTangles}, we say that an $R$-pre-additive category $\CC$ is \emph{graded} when it satisfies the following conditions:
\begin{enumerate}
\item  \label{item: graded-mor}For all objects $A,B$ in $\CC$, $\Mor(A,B)$ is a graded $R$-module, in a manner that \[\deg (f \circ g)= \deg (f)+ \deg (g)\] where applicable, and the degree of the identity is always $0$.

\item \label{item: graded-shift} There is a $\Z$-action on objects, denoted $(m, A) \mapsto A \{m\}$; and for any $A, B \in \Obj(\CC)$ and $m, n \in \Z$, $\Mor(A\{m\},B\{n\})$ is identified with $\Mor(A,B)$ as $R$-modules with a shift in the grading so that if $f \in \Mor(A,B)$ with $\deg(f) = d$, then the corresponding element of $\Mor(A\{m\},B\{n\})$ has degree $d+n-m$.
\end{enumerate}
\end{definition}

As explained in \cite[Section 6]{BarNatanTangles}, any $R$-pre-additive category satisfying \eqref{item: graded-mor} can be upgraded to satisfy \eqref{item: graded-shift} by formally introducing the shifted objects. Additionally, note that if $\CC$ is graded, then $\Mat(\CC)$ naturally inherits the structure of a graded category, where a matrix of morphisms is homogeneous of degree $d$ iff all of its entries are homogeneous of degree $d$.

\begin{definition} \label{def: Kom}
Following \cite[Definition 3.3]{BarNatanTangles}, for any $R$-additive category, we define the \emph{category of chain complexes} $\Kom(\CC)$ as follows. An object of $\Kom(\CC)$ consists of a sequence $(\Omega^r)_{r \in \Z}$, where $\Omega^r \in \Obj(\CC)$ and all but finitely many are $0$, along with morphisms $d^r \co \Omega^{r}\to \Omega^{r+1}$ satisfying $d^{r+1} \circ d^r = 0$ for all $r$. A morphism $f \co (\Omega_a, d_a) \to (\Omega_b, d_b)$ (a \emph{chain map}) is a family of morphisms $f^r \co \Omega_a^r\to \Omega_b^r$ such that $d_b^r \circ f^r = f^{r+1} \circ d_a^r$. As in standard homological algebra, the composition of morphisms is defined to be $(g \circ f)^r = g^r \circ f^r$.
\end{definition}

\begin{definition} \label{def: Kom-homotopy}
Following \cite[Definition 4.1]{BarNatanTangles}, given morphisms $f, g \co (\Omega_a, d_a) \to (\Omega_b, d_b)$, a \emph{homotopy} between $f$ and $g$ is a family of morphisms $h^r \co \Omega_a^r \to \Omega_b^{r-1}$ such that
\[
f^r - g^r = d_b^{r-1} \circ  h^r + h^{r+1} \circ d_a^r
\]
for all $r \in \Z$. We call $f$ and $g$ \emph{homotopic} if such a homotopy exists. A morphism $f \co (\Omega_a, d_a) \to (\Omega_b, d_b)$ is called a \emph{homotopy equivalence} if there exists a morphism $g \co (\Omega_b, d_b) \to (\Omega_a, d_a)$ such that $f \circ g$ and $g \circ f$ are each homotopic to the respective identity morphisms. The category $\Kom_{/h}(\CC)$ is defined to have the same objects as $\Kom(\CC)$, with morphisms given by homotopy classes of morphisms in $\Kom(\CC)$. Thus, a chain map $f$ is a homotopy equivalence iff it is an isomorphism in $\Kom_{/h}(\CC)$.
\end{definition}

We will now describe a modified version of $\Kom$ that is intended to mimic the structure of the Batson--Seed complex, as described in Section \ref{ssec: BSlink}.

\begin{definition} \label{def: CKom}
For any graded, $R$-additive category $\CC$, we define a  category $\CKom(\CC)$ as follows.

An object of $\CKom(\CC)$ consists of a sequence $(\Omega^r)_{r \in \Z}$, where $\Omega^r \in \Obj(\CC)$ and all but finitely many are $0$, along with morphisms $d^{r}_{+}\co \Omega^{r}\to \Omega^{r+1}$ and $d^{r}_{-}\co \Omega^{r}\to \Omega^{r-1}$ that are each homogeneous of degree $-1$ and satisfy $d_+^{r+1} \circ d_+^r = 0$ and $d_-^{r-1} \circ d_-^r = 0$ for all $r$. That is, $\Omega$ can be viewed as a chain of the form
\[
\xymatrix{
\cdots \ar@<.5ex>[r] &
\Omega^{r-1} \ar@<.5ex>[r]^{d_+^{r-1}} \ar@<.5ex>[l] &
\Omega^{r} \ar@<.5ex>[r]^{d_+^{r}} \ar@<.5ex>[l]^{d_-^{r}} &
\Omega^{r+1} \ar@<.5ex>[r] \ar@<.5ex>[l]^{d_-^{r+1}} &
\cdots \ar@<.5ex>[l]
}
\]
that is eventually zero on both ends. For convenience, we can combine the $d^{r}_{\pm}$ maps into a matrix $d_\pm \co \Omega \to \Omega$, where $\Omega$ is the formal direct sum of all $\Omega^r$, and write $d=d_+ +d_-$. Note that $d_+^2 = d_-^2 = 0$. However, $d^2 = d_+ d_- + d_- d_+$ need not be zero; it is a diagonal matrix consisting of the morphisms
\[
\lambda_r := d_+^{r-1} \circ d_-^r + d_-^{r+1} \circ d_+^r \co \Omega^r \to \Omega^r,
\]
which we call the \emph{curvature terms}. (See Remark \ref{rmk: curved} below regarding this terminology.) We refer to $r$ as the \emph{homological grading}.

Given objects $\Omega_a, \Omega_b$ in $\CKom(\CC)$, a morphism $f \co \Omega_a \to \Omega_b$ (a \emph{chain map}) is given by a family of morphisms $f^{r,r+2k} \co \Omega_a^r\to \Omega_b^{r+2k}$ for all $r,k\in \mathbb{Z}$ with the property that $fd=df$. We interpret the latter as a matrix equation, where $f$ is the matrix of all the terms $f^{r,r+2k}$; the finiteness assumption on the objects ensures that all entries of $df$ and $fd$ are finite sums. (Technically, to view $f$ as a matrix, we should include morphisms with odd homological degree shifts as well; we require these to be $0$.) The set of such chain maps is naturally an $R$-module. Composition of morphisms is given by the composition of their representative matrices, which again depends on finiteness; that is,
\[
(g \circ f)^{r, r+2k} = \sum_{l \in \Z} g^{r+2l, r+2k} \circ f^{r, r+2l}.
\]
The identity morphism of $\Omega$ is the direct sum of the identity morphisms of each $\Omega^r$.

For a morphism $f$, let us write $f = \sum_{n \in \Z} f_n$, where $f_n$ consists of the degree-$n$ terms of each of the terms $f^{r,r+2k}$ (which we denote by $f^{r,r+2k}_n$). Since all the terms in $d_+$ and $d_-$ are homogeneous of the same degree, each $f_n$ individually satisfies $f_n d = d f_n$. Thus, $\Mor(\Omega_a, \Omega_b)$ has the structure of a graded $R$-module. We may define a grading shift operator on $\Obj(\CKom(\CC))$ term-wise, giving $\CKom(\CC)$ the structure of a graded category as per Definition \ref{def: graded}. Whenever we refer to a morphism $f$ being \emph{homogeneous}, it is with respect to this grading unless otherwise specified.
\end{definition}

\begin{definition} \label{def: CKom-homotopy}
Given morphisms $f,g\co \Omega_a\to \Omega_b$ in $\CKom(\CC)$, a \emph{homotopy from $f$ to $g$} is a family of morphisms $h_{r,r+2k+1} \co \Omega_a^r\to \Omega_b^{r+2k+1}$ for all $r,k \in \Z$, such that $f-g=hd+dh$ (again interpreted as a matrix equation as above). If such a homotopy exists, we say $f$ and $g$ are homotopic. A morphism $f \co \Omega_a \to \Omega_b$ is called a \emph{homotopy equivalence} if there exists a morphism $g\co \Omega_b \to \Omega_a$ such that $f \circ g$ and $g \circ f$ are homotopic to the respective identity maps. As above, we define $\CKom_{/h}(\CC)$ to be the category whose objects are the same as $\CKom(\CC)$, and whose morphisms are homotopy classes of morphisms in $\Kom(\CC)$.
\end{definition}

\begin{remark} \label{rmk: commonfacts}
To check that $\CKom_{/h}(\CC)$ is actually a well-defined category, we must verify that standard facts of homological algebra continue to hold in $\CKom$: namely, that homotopy between maps is an equivalence relation, and that composition respects this relation. These follow from standard homological algebra arguments, interpreting differentials, homotopies, and chain maps as matrices. Likewise, the composition of homotopy equivalences is easily seen to be a homotopy equivalence.

Additionally, note that if $h_n$ denotes the grading $n$ part of a homotopy $n$ from $f$ to $g$ (consisting of all the degree-$n$ terms $h^{r,r+2k+1}_n$), then $h_n$ is a homotopy from $f_{n-1}$ to $g_{n-1}$. It follows that the grading of $\CKom(\CC)$ descends to $\CKom_{/h}(\CC)$.
\end{remark}

\begin{remark} \label{rmk: curved}
In our definition, $\CKom(\CC)$ is a hybrid between $\Kom(\CC)$ as above and the categories of \emph{matrix factorizations} or \emph{curved chain complexes}, which have appeared in various guises in the world of knot homologies; see, e.g., \cite{KhovanovRozansky1, ZemkeLinkFunctoriality}. The objects are equipped with a $\Z$-valued homological grading, as in $\Kom(\CC)$, but the morphisms (resp.~homotopies) are only required to preserve (resp.~shift) a $\Z/2$-valued grading, as in the category of matrix factorizations. However, unlike in the usual setting of matrix factorizations or curved chain complexes, we do not require the curvature to be a scalar multiple of the identity, and we allow for morphisms between objects with different curvatures. We have opted for the term ``curved chain complex'' (which is somewhat less standardized in the literature) and the notation $\CKom(\CC)$ to avoid this ambiguity. We will see that the Batson--Seed complex from Section \ref{ssec: BSlink}, which is an honest chain complex with $d^2=0$, can be obtained by ``tensoring'' together multiple formal complexes with nontrivial curvature, which mimics the pattern seen in \cite{KhovanovRozansky1} and elsewhere.
\end{remark}

Note that if $(\Omega, d_+, d_-)$ is a curved complex in $\CKom(\CC)$, then $(\Omega, d_+)$ is an honest complex (i.e., an object in $\Kom(\CC)$), since we assume that $d_+^2=0$. However, a morphism $f$ in $\CKom$ is not necessarily a morphism in $\Kom$, since we only require $f$ to commute with the total differential $d = d_+ + d_-$ and not individually with $d_+$ and $d_-$; moreover, $f$ typically does not preserve the homological grading, which is required for morphisms in $\Kom$. In order to extract morphisms in $\Kom$ from (certain) morphisms in $\CKom$, we now introduce a filtered version of $\CKom$, motivated by the $j$ filtration of the Batson--Seed complex from the previous section.

Informally, we wish to think of the ``filtration grading'' on a curved complex $(\Omega, d_+, d_-)$ as the sum of the homological grading (denoted $r$ above) and the internal grading coming from the category $\CC$. The differentials  $d_+$ and $d_-$ are then homogeneous of degrees $0$ and $-2$ with respect to this filtration grading, respectively, so the total differential $d$ is filtered (but not homogeneous), and $d_+$ can be thought of as the differential of the associated graded complex. Of course, in our formalism of graded categories, the gradings only make sense at the level of morphisms; there isn't actually a notion of taking quotients to form this associated graded complex. Nevertheless, we make the following definition:

\begin{definition} \label{def: filtered}
Let $\Omega_a, \Omega_b$ be objects in $\CKom(\CC)$. For any chain map $f \co \Omega_a \to \Omega_b$ and integer $m$, let $f^{(m)}$ consist of all the terms $f^{r,r+2k}_n $ with $n + 2k = m$. We call $f$ \emph{$p$-filtered} if $f^{(m)} = 0$ for all $m>p$.

Likewise, for a homotopy $h$ from $f$ to $g$, let $h^{(m)}$ consist of all terms $h^{r,r+2k+1}_n$ with $n+2k+1 = m$. We call $h$ \emph{$p$-filtered} if $h^{(m)}=0$ for all $m>p$. If $f$ and $g$ are $p$-filtered maps and are related by a $p$-filtered homotopy, we say that they are \emph{$p$-filtered-homotopic}. When $p=0$, we simply say \emph{filtered} rather than $0$-filtered throughout.

Let $\CKom^f(\CC)$ denote the subcategory of $\CKom(\CC)$ whose objects are the same as $\CKom(\CC)$, and whose morphisms are the filtered maps. (The composition of two filtered maps is easily seen to be filtered; more generally, the composition of a $p$-filtered map and a $q$-filtered map is $(p+q)$-filtered.) Let $\CKom^f_{/h}(\CC)$ denote the category with the same objects, whose morphisms are filtered homotopy classes of filtered maps.
\end{definition}

If a map $f$ is homogeneous of degree $p$ with respect to the internal grading of $\CC$, then $f^{(m)}$ can also be seen as the sum of all terms $f^{r,r+2k}$ with $2k = m-p$. In particular, if $f$ is both homogeneous of degree $p$ and $p$-filtered, then all nonzero terms in $f$ preserve or decrease the homological grading, and $f^{(p)}$ consists of precisely those terms that preserve the homological grading. (This will be the case with the cobordism maps considered in Section \ref{sec: cobordism-maps}.) If so, we may view $f^{(p)}$ as the ``associate graded'' morphism of $f$, in the following sense:

\begin{lemma} \label{lemma: filtered}
Let $(\Omega, d_+, d_-)$ and $(\Omega', d_+', d_-')$ be objects in $\CKom(\CC)$.
\begin{enumerate}
\item If $f \co \Omega \to \Omega'$ is a chain map that is homogeneous of degree $p$ and $p$-filtered, then $f^{(p)}$ is a valid morphism in $\Kom(\CC)$ from $(\Omega, d_+)$ to $(\Omega', d_+')$.

\item If $f,g \co \Omega \to \Omega'$ are chain maps that are homogeneous of degree $p$, $p$-filtered, and $p$-filtered-homotopic, then $f^{(p)}$ and $g^{(p)}$ are homotopic (in the sense of Definition \ref{def: Kom-homotopy}).
\end{enumerate}
\end{lemma}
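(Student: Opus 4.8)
The plan is to pass to the ``associated graded'' of the $j$-filtration by extracting, from the matrix equations that define a chain map and a homotopy in $\CKom(\CC)$, their components of top filtration degree. The structural facts I will use are that $d_+$ and $d_+'$ are homogeneous of filtration degree $0$ while $d_-$ and $d_-'$ are homogeneous of filtration degree $-2$, and that, as observed just before the statement, a chain map $f$ that is homogeneous of degree $p$ and $p$-filtered decomposes as $f = f^{(p)} + f^{(p-2)} + f^{(p-4)} + \cdots$ --- a finite sum, since each object of $\CKom(\CC)$ is nonzero in only finitely many homological gradings --- with $f^{(p)}$ consisting exactly of the terms of $f$ that preserve the homological grading, so that $f^{(p)} = (f^{(p),r})_r$ with $f^{(p),r} \co \Omega^r \to (\Omega')^r$.

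For part (1), I would expand the chain map equation $fd = df$ (with $d = d_+ + d_-$ on $\Omega$ and $d' = d_+' + d_-'$ on $\Omega'$) as an identity of matrices of morphisms of $\CC$ and compare the components of filtration degree $p$ on the two sides. Since $f$ has no pieces of filtration degree exceeding $p$, and $d_-, d_-'$ lower the filtration degree by $2$ whereas $d_+, d_+'$ preserve it, the only contribution of filtration degree $p$ is $f^{(p)} d_+$ on the left and $d_+' f^{(p)}$ on the right; hence $f^{(p)} d_+ = d_+' f^{(p)}$. Reading this off in each homological grading $r$ gives $f^{(p),r+1} \circ d_+^r = (d_+')^r \circ f^{(p),r}$, which --- because $(\Omega,d_+)$ and $(\Omega',d_+')$ are genuine objects of $\Kom(\CC)$ (as $d_+^2 = (d_+')^2 = 0$) --- is precisely the assertion that $f^{(p)}$ is a morphism in $\Kom(\CC)$.

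For part (2), let $h$ be a $p$-filtered homotopy from $f$ to $g$; replacing $h$ by its degree-$(p+1)$ part if necessary (using Remark \ref{rmk: commonfacts}), we may take $h$ homogeneous of internal degree $p+1$, and being $p$-filtered it decomposes as $h = h^{(p)} + h^{(p-2)} + \cdots$, where $h^{(p)}$ now consists exactly of the terms that lower the homological grading by $1$. I would then compare the filtration-degree-$p$ components of $f - g = hd + dh$: the left side contributes $f^{(p)} - g^{(p)}$, and, by the same reasoning as above, the only filtration-degree-$p$ contributions on the right are $h^{(p)} d_+$ and $d_+' h^{(p)}$. This yields $f^{(p)} - g^{(p)} = h^{(p)} d_+ + d_+' h^{(p)}$, which upon unpacking in each homological grading exhibits $(h^{(p),r})_r$ as a homotopy in $\Kom(\CC)$ from $f^{(p)}$ to $g^{(p)}$ in the sense of Definition \ref{def: Kom-homotopy}.

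The computations themselves --- expanding the matrix products and tracking homological gradings --- are routine. The one point that genuinely requires care is the bookkeeping that isolates the filtration-degree-$p$ part: one must verify that the $p$-filtered hypotheses on $f$, $g$, and $h$, together with the filtration degrees $0$ and $-2$ of $d_+$ and $d_-$, rule out every term other than those listed above. In particular the curvature terms $\lambda_r = d_+^{r-1} d_-^r + d_-^{r+1} d_+^r$ have filtration degree $-2$ and so never contribute to the top degree, which is why $d^2 \neq 0$ causes no trouble here. I do not anticipate any obstacle beyond organizing this bookkeeping cleanly.
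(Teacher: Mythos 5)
Your proposal is correct and follows essentially the same route as the paper: both extract the top filtration-degree piece of the equations $fd = d'f$ and $f - g = d'h + hd$, using that $d_\pm$ have filtration degrees $0$ and $-2$ and that the degree-$p$, $p$-filtered hypotheses (after replacing $h$ by its degree-$(p+1)$ part) kill every term except those giving $f^{(p)}d_+ = d_+'f^{(p)}$ and $f^{(p)} - g^{(p)} = d_+'h^{(p)} + h^{(p)}d_+$. The paper organizes the same bookkeeping by homological-grading components rather than by filtration degree, but the content is identical.
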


\begin{proof}
For part (1), for each $r$, the portions of $df$ and $fd$ that go from $\Omega^r$ to $\Omega'^{r+1}$ are equal:
\begin{equation} \label{eq: filtered-chain}
d_+^{\prime r} f^{r,r} + d_-^{\prime r+2} f^{r,r+2} = f^{r+1,r+1} d_+^r + f^{r-1,r+1} d_-^r.
\end{equation}
Since $f$ is $p$-filtered, we have $f^{r,r+2} = f^{r-1,r+1} = 0$, so
\[
f^{r+1,r+1} d_+^r = d_+^{\prime r} f^{r,r}.
\]
Thus, $f^{(p)}$ is a chain map.

For part (2), let $h$ be a homotopy from $f$ to $g$, satisfying $f-g = d'h+hd$. Since $f$, $g$, $d$, and $d'$ are all homogeneous (of degrees $p$, $p$, $-1$, and $-1$, respectively), this equation still holds if we replace $h$ with its degree $p+1$ part; thus, we may assume $h$ is homogeneous of degree $p+1$ (and still $p$-filtered). Definition \ref{def: filtered} implies that $h^{r,r+2k+1}=0$ for all $k \ge 0$, while $h^{(p)}$ consists of all the terms $h^{r,r-1}$. Looking at the terms in the relation $f-g = d'h + hd$ that go from $\Omega^r$ to $\Omega^{\prime r}$ gives:
\begin{equation} \label{eq: filtered-htpy}
f^{r,r} - g^{r,r} = d'_+{}^{r-1} h^{r,r-1} + d'_-{}^{r+1} h^{r,r+1} + h^{r+1,r} d_+^r + h^{r-1,r} d_-^r.
\end{equation}
The second and fourth terms on the right vanish. Thus, $f^{(p)} - g^{(p)} = d'_+ h^{(p)} + h^{(p)} d_+$, as required.
\end{proof}

The next lemma, which will ultimately be useful for the proof of Theorem \ref{thm: main}, concerns a related case in which the morphisms and homotopies are not assumed to be filtered, but the $d_-$ maps on the individual complexes vanish.

\begin{lemma} \label{lemma: dminus0}
Let $(\Omega, d_+, d_-)$ and $(\Omega', d_+', d_-')$ be objects in $\CKom(\CC)$, and assume that $d_-=0$ and $d'_-=0$.
\begin{enumerate}
\item If $f \co \Omega \to \Omega'$ is a chain map that is homogeneous of degree $p$, then $f^{(p)}$ is a valid morphism in $\Kom(\CC)$ from $(\Omega, d_+)$ to $(\Omega', d_+')$.

\item If $f,g \co \Omega \to \Omega'$ are chain maps that are homogeneous of degree $p$, then $f^{(p)}$ and $g^{(p)}$ are homotopic (in the sense of Definition \ref{def: Kom-homotopy}).
\end{enumerate}
\end{lemma}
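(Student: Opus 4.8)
The plan is to follow the proof of Lemma~\ref{lemma: filtered} essentially verbatim, now exploiting the simplification that, with $d_-=0$ and $d_-'=0$, the total differentials of $\Omega$ and $\Omega'$ are just $d_+$ and $d_+'$ themselves. (As in Lemma~\ref{lemma: filtered}(2), I read part~(2) with the additional standing hypothesis that $f$ and $g$ are homotopic in $\CKom(\CC)$; this is needed, as one sees by taking $f=\id$, $g=0$.) The one preliminary observation I would record is that, since $f$ and $g$ are homogeneous of degree $p$, the truncation $(-)^{(p)}$ simply extracts the terms that preserve the homological grading, so $f^{(p)}=\{f^{r,r}\}_r$ and $g^{(p)}=\{g^{r,r}\}_r$; each $f^{r,r}$ is a morphism $\Omega^r\to\Omega'^r$ in $\CC$ and hence a legitimate piece of a morphism in $\Kom(\CC)$ (which imposes no degree constraint on its morphisms), and $(\Omega,d_+)$, $(\Omega',d_+')$ are genuine objects of $\Kom(\CC)$ because $d_+^2=0$ by the $\CKom$ axioms.

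For part~(1): I would write the chain-map condition $fd=df$ as $f d_+ = d_+' f$ and then extract the block of this matrix identity that maps $\Omega^r$ into $\Omega'^{r+1}$. On the left the only surviving contribution is $f^{r+1,r+1}d_+^r$ and on the right the only surviving contribution is $d_+^{\prime r}f^{r,r}$, precisely because $d_-$ and $d_-'$ vanish — these are exactly the $d_-$-terms that \eqref{eq: filtered-chain} removes using the filtration hypothesis, but here they are identically zero. Thus $f^{r+1,r+1}d_+^r = d_+^{\prime r}f^{r,r}$ for every $r$, which is precisely the statement that $f^{(p)}=\{f^{r,r}\}_r$ is a chain map in $\Kom(\CC)$ from $(\Omega,d_+)$ to $(\Omega',d_+')$.

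For part~(2): starting from a homotopy $h$ with $f-g = hd+dh = h d_+ + d_+' h$, I would, exactly as in the proof of Lemma~\ref{lemma: filtered}(2) and using homogeneity of $f$, $g$, $d_+$, $d_+'$, replace $h$ by its homogeneous degree-$(p+1)$ part, so that $h^{(p)}=\{h^{r,r-1}\}_r$. Extracting the block of $f-g = h d_+ + d_+' h$ that maps $\Omega^r$ into $\Omega'^r$, the vanishing of $d_-$ and $d_-'$ discards all but two of the right-hand terms, leaving $f^{r,r}-g^{r,r} = d_+^{\prime r-1}h^{r,r-1} + h^{r+1,r}d_+^r$ — which is \eqref{eq: filtered-htpy} with its $d_-$-terms gone — and this says exactly that $h^{(p)}$ is a homotopy in $\Kom(\CC)$ from $f^{(p)}$ to $g^{(p)}$.

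I do not anticipate any genuine obstacle: the lemma is a purely formal consequence of unwinding the definitions of chain map, homotopy, and the truncation $(-)^{(p)}$ in $\CKom(\CC)$, and the two identities it needs are literally \eqref{eq: filtered-chain} and \eqref{eq: filtered-htpy} (valid here via the same computation as before, with the $d_-$- and $d_-'$-terms now zero rather than filtered away). The only things requiring care are the matrix bookkeeping of which blocks survive after setting $d_-=d_-'=0$ and taking $f$, $g$, $h$ homogeneous, and — as with Lemma~\ref{lemma: filtered}(2) — remembering that part~(2) tacitly presupposes $f$ and $g$ are homotopic in $\CKom(\CC)$.
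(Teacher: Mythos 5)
Your proof is correct and is exactly the paper's argument: the paper simply says the proof is the same as that of Lemma~\ref{lemma: filtered}, with the relevant terms in \eqref{eq: filtered-chain} and \eqref{eq: filtered-htpy} vanishing because $d_-=d_-'=0$ rather than by the filtration hypothesis, which is precisely what you carry out. Your observation that part~(2) tacitly presupposes $f$ and $g$ are homotopic in $\CKom(\CC)$ is also right and consistent with how the lemma is applied in the proof of Lemma~\ref{lemma: Kh-split}.
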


\begin{proof}
The proof is exactly the same as for Lemma \ref{lemma: filtered}, except that the necessary terms in \eqref{eq: filtered-chain} and \eqref{eq: filtered-htpy} now vanish because $d_-$ and $d'_-$ are both $0$.
\end{proof}

\begin{example} \label{ex: BS-Ckom-RMod}
Let $\Mod_R$ denote the category of $\Z$-graded modules over  $R$, which is an $R$-additive category. For a link diagram $L$ along with a choice of weights in $R$, the Batson--Seed complex $(C,d_+,d_-)$ constructed in Section \ref{ssec: BSlink} can be seen as object of $\CKom(\Mod_R)$. To be precise, as before, let $V = R[X]/(X^2)$, with $\deg(1)=1$ and $\deg(X)=-1$. The complex takes the form $(C, d_+, d_-)$, where
\[
C^r = \bigoplus_{v \in \{0,1\}^n \co \abs{v} = r+ n_-} V^{\otimes k_v} \{n_+ - n_-\}.
\]
Note that the internal grading of each summand represents the $l = j-i$ grading, rather than the $j$ grading as is more common. The shifts agree with the description in Section \ref{ssec: BSlink}. The $d_+$ and $d_-$ differentials are each homogeneous of degree $-1$ with respect to this grading, as required. In this particular case, we have $d^2 = 0$; that is, the curvature vanishes. We can reframe Batson and Seed's proof of invariance as saying that the isomorphism class of $C$ in $\CKom^f_{/h}(\Mod_R)$ is an invariant of the underlying weighted link; this formulation will follow from our more abstract statement below in Section \ref{sec: reidemeister}.
\end{example}

\subsection{The tangle cobordism category} \label{ssec: cob3}

We now describe the tangle cobordism categories $\Cob^3$, $\Cob^3_{/l}$, $\Cob^3_\bullet$, and $\Cob^3_{\bullet/l}$, introduced by Bar--Natan \cite[Sections 3, 6, 11.2]{BarNatanTangles}. The only difference is that we will allow formal linear combinations of morphisms over an arbitrary $R$ rather than just over $\Z$.

Let $D$ denote a disk in $\R^2$ and let $\CC$ be its boundary circle. Choose a base point $p_0 \in \CC$, and let $B$ be a finite set of points in $\CC - \{p_0\}$ with an even number of points. A \emph{planar tangle} with boundary $B$ is a properly embedded $1$-manifold in $D$ with boundary $B$. A \emph{tangle cobordism} from $T$ to $T'$ is a properly embedded, compact surface $S \subset D \times I$ with
\[
\partial S = (T \times \{0\}) \cup (B \times I) \cup (T' \times \{1\}).
\]
A \emph{dotted cobordism} is a cobordism $S$ along with a finite set of points on the surface $S$. The \emph{degree} of a cobordism with $d$ dots is defined to be
\begin{equation} \label{eq: degree}
\deg S:= \chi(S) - \abs{B}/2 - 2 d,
\end{equation}
where $\chi(S)$ is the Euler characteristic of $S$.

It is easy to verify that the degree is additive under stacking. We also note for future reference that the degree of a saddle cobordism is $-1$, and degree of a birth or death coordism is $+1$. (See \cite[Exercise 6.3]{BarNatanTangles}.) As in \cite{BarNatanTangles}, we will often use the following notation to indicate cobordisms using planar pictures:

\begin{equation} \label{eq: key}
\begin{array}{lccccc}
\text{Birth} \qquad & \vcenter{\hbox{\birth[0.5]}} & : & \emptyset & \longrightarrow & \vcenter{\hbox{\unknot[0.5]}} \\
\text{Death} \qquad & \vcenter{\hbox{\death[0.5]}} & : & \vcenter{\hbox{\unknot[0.5]}} & \longrightarrow & \emptyset  \\
\text{Saddle} \qquad & \vcenter{\hbox{\vsaddle[0.5]}} & : & \vcenter{\hbox{\vres[0.5]}} & \longrightarrow & \vcenter{\hbox{\hres[0.5]}}
\end{array}
\end{equation}

\begin{definition} \label{def: cob3}
Objects of both $\Cob^3(B)$ and $\Cob^3_\bullet(B)$ are pairs $T\{m\}$, where $T$ is a planar tangle with boundary $B$ and $m \in \Z$. Morphisms in $\Cob^3(B)$ (resp.~$\Cob^3_{\bullet}(B)$) from $T\{m\}$ to $T'\{n\}$ are formal $R$-linear combinations of cobordisms (resp.~dotted cobordisms) from $T$ to $T'$, considered up to isotopy rel boundary. Composition of morphisms is by stacking. The degree of a morphism $S \in \Mor(T\{m\}, T'\{n\})$ is given by $\deg(S) +n-m$. A linear combination of cobordisms is considered homogeneous of degree $d$ if all the constituent cobordisms have degree $d$. This gives $\Cob^3(B)$ and $\Cob^3_\bullet(B)$ the structure of graded categories.
\end{definition}

When $B=\emptyset$, we may ignore the disk $D$ and consider the circles to lie in $\R^2$ and the cobordisms in $\R^2 \times I$.

\begin{definition} \label{def: cob3l}
Let $\Cob^3_{/l}(B)$ (resp.~$\Cob^3_{\bullet/l}(B)$) denote the quotient of $\Cob^3(B)$ (resp.~$\Cob^3_{\bullet}(B)$) in which the objects are the same as those of $\Cob^3$, but the morphisms are quotiented by the local relations in Figure \ref{fig: local-nodots} (resp.~Figure \ref{fig: local-relations}). Since each relation is homogeneous, each of these inherits the structure of a graded $R$-additive category. Note that the local relations for $\Cob^3_{/l}(B)$ follow from those for $\Cob^3_{\bullet/l}(B)$; as a result, the inclusion of $\Cob^3(B)$ into $\Cob^3_\bullet(B)$ descends to the $/l$ versions.
\end{definition}

\begin{figure}
\[
\vcenter{\hbox{\sphere[0.75]}} =0 \qquad \vcenter{\hbox{\torus[0.75]}} = 2
\]
\bigskip
\[
\vcenter{\hbox{\begin{tikzpicture}[scale=0.75]
\begin{scope}[rotate=45]
\draw[] (0,1) ellipse (.5 and .25);
\draw[] (0,-1) ellipse (.5 and .25);
\draw[] (-1,0) ellipse (.25 and .5);
\draw[] (1,0) ellipse (.25 and .5);
\filldraw[white] (-.1,1) arc (180:0:.1 and .05) arc (180:270:.9) arc (90:-90:.05 and .1) arc (270:180:1.1);
\draw[] (-.1,1) arc (180:0:.1 and .05) arc (180:270:.9) arc (90:-90:.05 and .1) arc (270:180:1.1);
\draw[very thin] (-.1,1) arc (180:360:.1 and .05);
\draw[very thin] (1,.1) arc (90:270:.05 and .1);
\end{scope}
\end{tikzpicture}}}
+
\vcenter{\hbox{\begin{tikzpicture}[scale=0.75]
\begin{scope}[rotate=225]
\draw[] (0,1) ellipse (.5 and .25);
\draw[] (0,-1) ellipse (.5 and .25);
\draw[] (-1,0) ellipse (.25 and .5);
\draw[] (1,0) ellipse (.25 and .5);
\filldraw[white] (-.1,1) arc (180:0:.1 and .05) arc (180:270:.9) arc (90:-90:.05 and .1) arc (270:180:1.1);
\draw[] (-.1,1) arc (180:0:.1 and .05) arc (180:270:.9) arc (90:-90:.05 and .1) arc (270:180:1.1);
\draw[very thin] (-.1,1) arc (180:360:.1 and .05);
\draw[very thin] (1,.1) arc (90:270:.05 and .1);
\end{scope}
\end{tikzpicture}}}
\ = \
\vcenter{\hbox{\begin{tikzpicture}[scale=0.75]
\begin{scope}[rotate=315]
\draw[] (0,1) ellipse (.5 and .25);
\draw[] (0,-1) ellipse (.5 and .25);
\draw[] (-1,0) ellipse (.25 and .5);
\draw[] (1,0) ellipse (.25 and .5);
\filldraw[white] (-.1,1) arc (180:0:.1 and .05) arc (180:270:.9) arc (90:-90:.05 and .1) arc (270:180:1.1);
\draw[] (-.1,1) arc (180:0:.1 and .05) arc (180:270:.9) arc (90:-90:.05 and .1) arc (270:180:1.1);
\draw[very thin] (-.1,1) arc (180:360:.1 and .05);
\draw[very thin] (1,.1) arc (90:270:.05 and .1);
\end{scope}
\end{tikzpicture}}}
+
\vcenter{\hbox{\begin{tikzpicture}[scale=0.75]
\begin{scope}[rotate=135]
\draw[] (0,1) ellipse (.5 and .25);
\draw[] (0,-1) ellipse (.5 and .25);
\draw[] (-1,0) ellipse (.25 and .5);
\draw[] (1,0) ellipse (.25 and .5);
\filldraw[white] (-.1,1) arc (180:0:.1 and .05) arc (180:270:.9) arc (90:-90:.05 and .1) arc (270:180:1.1);
\draw[] (-.1,1) arc (180:0:.1 and .05) arc (180:270:.9) arc (90:-90:.05 and .1) arc (270:180:1.1);
\draw[very thin] (-.1,1) arc (180:360:.1 and .05);
\draw[very thin] (1,.1) arc (90:270:.05 and .1);
\end{scope}
\end{tikzpicture}}}
\]
\caption{The sphere, torus, and four-tube relations in $\Cob^3_{/l}$.} \label{fig: local-nodots}
\end{figure}
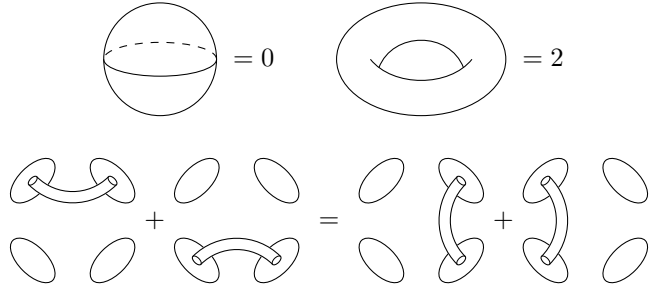

\begin{figure}
\[
\vcenter{\hbox{\sphere[0.75]}} =0 \qquad \vcenter{\hbox{\dottedsphere[0.75]}} = 1 \qquad
\vcenter{\hbox{
\begin{tikzpicture}[scale=0.75]
\draw (-1.5,-1) -- (0.5,-1) -- (1.5,1) -- (-0.5,1) -- (-1.5,-1);
\node at (-0.5,0) {$\bullet$};
\node at (0.5,0) {$\bullet$};
\end{tikzpicture}}} = 0
\]

\bigskip

\[
\vcenter{\hbox{
\begin{tikzpicture}[scale=0.75]
\draw (0,0) ellipse (0.3 and 1);
\draw (3,1) arc (90:-90:0.3 and 1);
\draw[dashed] (3,1) arc (90:270:0.3 and 1);
\draw (0.02,1) .. controls (1,.5) and (2,.5) .. (2.98,1);
\draw (0.02,-1) .. controls (1,-.5) and (2,-.5) .. (2.98,-1);
\draw (1.5,.62) arc (90:-90:0.186 and .62);
\draw[dashed] (1.5,.62) arc (90:270:0.186 and .62);
\end{tikzpicture}}}
\, =
\vcenter{\hbox{
\begin{tikzpicture}[scale=0.75]
\draw (0,0) ellipse (0.3 and 1);
\draw (3,1) arc (90:-90:0.3 and 1);
\draw[dashed] (3,1) arc (90:270:0.3 and 1);
\draw (0.02,1) .. controls (1.5, .5) and (1.5, -.5) .. (0.02,-1);
\draw (2.98,1) .. controls (1.5, .5) and (1.5, -.5) .. (2.98,-1);
\node at (0.75,0) {$\bullet$};
\end{tikzpicture}}}
\, +
\vcenter{\hbox{
\begin{tikzpicture}[scale=0.75]
\draw (0,0) ellipse (0.3 and 1);
\draw (3,1) arc (90:-90:0.3 and 1);
\draw[dashed] (3,1) arc (90:270:0.3 and 1);
\draw (0.02,1) .. controls (1.5, .5) and (1.5, -.5) .. (0.02,-1);
\draw (2.98,1) .. controls (1.5, .5) and (1.5, -.5) .. (2.98,-1);
\node at (2.25,0) {$\bullet$};
\end{tikzpicture}}}
\]
\caption{The sphere, dotted sphere, two-dot, and neck-cutting relations for $\Cob^3_{\bullet/l}$; see \cite[p. 1493]{BarNatanTangles}.}\label{fig: local-relations}
\end{figure}

Finally, akin to Bar-Natan's $\Kob(B)$, we define
\begin{align*}
\CKob(B) &= \CKom(\Mat(\Cob^3_{/l}(B))) & \CKob_\bullet(B) &= \CKom(\Mat(\Cob^3_{\bullet/l}(B))) \\
\CKob^f(B) &= \CKom^f(\Mat(\Cob^3_{/l}(B))) & \CKob_\bullet^f(B) &= \CKom^f(\Mat(\Cob^3_{\bullet/l}(B))).
\end{align*}
We will sometimes omit $B$ from the notation if it is understood from context. Note also that up to natural isomorphism, the categories above depend only on the number of points in $B$, so we may also write $\CKob(k)$ or $\CKob_\bullet(k)$ for a set $B$ with $k$ points. We also define $\CKob_{/h}(B)$, $\CKob_{\bullet/h}(B)$, $\CKob_{/h}^f(B)$, and $\CKob_{\bullet/h}^f(B)$ to be the homotopy categories of the above categories.

\subsection{Planar algebras} \label{ssec: planar}

In this section we adapt Bar-Natan's ideas of planar algebra actions (see \cite[Section 5]{BarNatanTangles}) to our homological setting.

\begin{definition}
Following \cite[Definition 5.1]{BarNatanTangles}, a \emph{$d$-input (oriented) planar arc diagram} $D$ consists of a closed disk $\Delta \subset \R^2$; $d$ pairwise disjoint disks $\Delta_1, \dots, \Delta_d$ in the interior of $\Delta$; basepoints $p_i \in \partial \Delta_i$ and $p_0 \in \partial \Delta$, and a properly embedded, compact, (oriented) $1$-dimensional submanifold $A \subset \Delta \minus \operatorname{int}(\Delta_1 \cup \dots \cup \Delta_d)$ whose ends are disjoint from the basepoints.
\end{definition}

Given a planar arc diagram $D$, let $B_i = \partial \Delta_i \cap A$ and $B = \partial \Delta \cap A$. As explained in \cite[Section 5]{BarNatanTangles}, the planar arc diagram induces an operation
\[
D \co \Obj(\Cob^3(B_1)) \times \dots\times \Obj(\Cob^3(B_d)) \to \Obj(\Cob^3(B))
\]
given by plugging in tangles to $\Delta_1, \dots, \Delta_d$ to obtain a tangle in $\Delta$. Likewise, for tangles $T_i, T_i' \in \Obj(\Cob^3(B_i))$, $D$ gives a multi-linear operation
\[
D \co \Mor_{\Cob^3(B_1)}(T_1, T_1') \times \dots\times \Mor_{\Cob^3(B_d)}(T_d, T_d') \to \Mor_{\Cob^3(B)}( D(T_1, \dots, T_d), D(T_1', \dots, T_d'))
\]
obtained by gluing surfaces in $\Delta_i \times I$ to $A \times I$ and extending $R$-linearly. The same applies to the variants $\Cob^3_\bullet$ and $\Cob^3_{\bullet/l}$. These operations satisfy the \emph{identity} and \emph{associativity} axioms (see \cite[p.~1465]{BarNatanTangles}), giving both $\Obj(\Cob^3)$ and $\Mor(\Cob^3)$ the structure of \emph{planar algebras}.

We now discuss the planar algebra operations on $\CKob$ and its variants. Analogous to \cite[Theorem 2]{BarNatanTangles}, we have:

\begin{theorem} \label{thm: CKob-planar}
The collections of categories $\{\CKob(k)\}$ and $\{\CKob_{\bullet}(k)\}$ (as well as their filtered versions) each have the structure of an unoriented planar algebra; moreover, the $D$ operations preserve homotopy of maps and homotopy equivalence of complexes.
\end{theorem}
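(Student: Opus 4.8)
The plan is to mirror Bar-Natan's proof of \cite[Theorem 2]{BarNatanTangles}, checking that the extra homological-algebra bookkeeping forced by the $\CKom$ construction — two differentials $d_+, d_-$ instead of one, curvature terms $\lambda_r$, and morphisms that only shift a $\Z/2$-grading — causes no trouble. First I would define, for a $d$-input planar arc diagram $D$, the induced operation on objects of $\CKob(k_i)$: given curved complexes $(\Omega_i, d_{+,i}, d_{-,i})$, form the tensor-product-type complex whose homological grading is the sum of the homological gradings of the inputs, whose underlying object in each homological degree is the formal direct sum of $D$ applied (via the planar-algebra structure on $\Mat(\Cob^3_{/l})$, which follows from the planar-algebra structure on $\Cob^3_{/l}$ exactly as in \cite[Section 5]{BarNatanTangles}) to the corresponding tuples of summands, and whose $d_+$ and $d_-$ are built by the usual signed Leibniz rule, inserting a Koszul-type sign on the $j$-th tensor factor. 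Here the only genuinely new point is that one must separately expand $d_+ = \sum_i 1 \otimes \dots \otimes d_{+,i} \otimes \dots \otimes 1$ and $d_- = \sum_i 1 \otimes \dots \otimes d_{-,i} \otimes \dots \otimes 1$ (with appropriate signs), and then check $d_+^2 = 0$, $d_-^2 = 0$; these follow from the corresponding identities on each factor together with the sign conventions, exactly as in the single-differential case. One then notes that the curvature of the output is the ``sum'' of the curvatures of the inputs: $\lambda_r^{\mathrm{out}} = \sum_i 1 \otimes \dots \otimes \lambda_{r_i}^{(i)} \otimes \dots \otimes 1$, so that if all inputs are honest complexes ($\lambda = 0$) so is the output — this is the ``tensoring together curved complexes'' remark alluded to in Remark \ref{rmk: curved}.

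Next I would define the operation on morphisms: given chain maps $f_i \co \Omega_i \to \Omega_i'$ in $\CKom$, the map $D(f_1, \dots, f_d)$ is the tensor product $f_1 \otimes \dots \otimes f_d$ with planar-algebra gluing of the underlying cobordisms, again with Koszul signs. One checks $D(f_1,\dots,f_d)\, d = d\, D(f_1,\dots,f_d)$ term by term, using $f_i d = d f_i$ for each input and that $d = d_+ + d_-$ splits as a sum of single-factor operators; the multi-index sums converge because all complexes are finitely supported. Compatibility with the grading shift operators $\{m\}$ is immediate since these act term-wise. The identity and associativity axioms for the resulting planar algebra reduce, degree by homological degree, to the identity and associativity axioms for the planar algebra $\Mat(\Cob^3_{/l})$, which we already have; the only bookkeeping is verifying that the sign conventions are associative, which is the standard Koszul-sign check. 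For the filtered versions $\CKob^f$, I would observe that if each $f_i$ is $0$-filtered then, since the filtration grading of the tensor product is the sum of the filtration gradings and $d_-$ drops filtration by $2$ on each factor, $D(f_1, \dots, f_d)$ is again $0$-filtered; hence the operations restrict to the filtered subcategories. Everything above applies verbatim to the dotted versions $\CKob_\bullet$, since $\Cob^3_{\bullet/l}$ is also a planar algebra.

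Finally, for the homotopy-invariance clause: if $h_i$ is a homotopy from $f_i$ to $g_i$ on the $i$-th input and we fix chain maps on the other inputs, then $1 \otimes \dots \otimes h_i \otimes \dots \otimes 1$ (with signs) is a homotopy from $D(\dots, f_i, \dots)$ to $D(\dots, g_i, \dots)$; this is the usual computation $d(1\otimes\cdots\otimes h_i\otimes\cdots\otimes 1) + (1\otimes\cdots\otimes h_i\otimes\cdots\otimes 1)d = 1\otimes\cdots\otimes(d h_i + h_i d)\otimes\cdots\otimes 1$, valid because $h_i d = d h_i$ on the other factors and the single-factor operators for distinct factors (anti)commute as dictated by the signs. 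Iterating over the inputs and using that composition of chain maps descends to $\CKom_{/h}$ (Remark \ref{rmk: commonfacts}), one gets that $D$ is well-defined on homotopy classes and sends homotopy equivalences to homotopy equivalences. I expect the main obstacle to be purely organizational rather than conceptual: pinning down a single coherent sign convention for the Koszul signs in $d_\pm$, in $D(f_1,\dots,f_d)$, and in the homotopies, so that $d^2$-type identities, the chain-map identity, associativity of the planar-algebra operations, and the homotopy formulas all hold simultaneously — the same nuisance Bar-Natan navigates in \cite{BarNatanTangles}, now doubled because of the presence of $d_-$. Once the signs are fixed, each verification is a routine term-by-term matrix computation of the kind already used in the proof of Lemma \ref{lemma: filtered}.
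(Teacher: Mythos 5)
Your proposal is correct and follows essentially the same route as the paper: both define $D(\Omega_1,\dots,\Omega_d)$ as a tensor-product-style complex with Koszul signs in $d_\pm$, extend to morphisms and homotopies factorwise, and reduce all verifications to the classical tensor-product arguments (the additivity of curvature you note is recorded separately in the paper as Lemma \ref{lemma: curv-planar}). The only cosmetic difference is that you insert Koszul signs in $D(f_1,\dots,f_d)$, which the paper omits because the components $\psi_i^{r_i,r_i+2k_i}$ shift homological degree by even amounts, so those signs are trivial.
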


\begin{proof}
Bar-Natan's proof extends even in our slightly modified homological setting, \emph{mutatis mutandis}. It suffices to point out a few points.

Let $D$ be a planar arc diagram with $d$ inputs, as above, and consider complexes $(\Omega_i,d_{i,\pm}) \in \CKob(B_i)$. Just as in Bar-Natan's setting, we define $D(\Omega_1,\dots,\Omega_d):=(\Omega, d_\pm) \in \CKob(B)$ by
\begin{align}
\label{eq: product-Omega}
\Omega^r&:= \bigoplus _{r=r_1+\dots+r_d} D(\Omega_1^{r_1},\dots,\Omega_d^{r_d}) \\
\label{eq: product-d}
d_\pm |_{D(\Omega_1^{r_1},\dots,\Omega_d^{r_d})} &:= \sum_{i=1}^d (-1)^{\sum_{j<i} r_j} D(I_{\Omega_1^{r_1}}, \dots, d_{i,\pm}^{r_i}, \dots, I_{\Omega_d^{r_d}})
\end{align}
Note that the signs in \eqref{eq: product-d} are exactly the same as those in a tensor product of ordinary chain complexes. The verification that $d_+^2 = d_-^2 = 0$ follows exactly as in the classical proof that the tensor product of chain complexes yields a chain complex.

Next, we consider how to perform the $D$ operation to morphisms in $\CKob$. Given chain maps $\psi_i\co \Omega_{i}\to \Omega'_{i}$ (a morphism in $\CKob(B_i)$) for $i=1, \dots, d$, we obtain an induced chain map
\[
D(\psi_1, \dots, \psi_d) \co D(\Omega_1, \dots, \Omega_d) \to D(\Omega'_1, \dots, \Omega'_d)
\]
which is the sum of the morphisms
\[
D(\psi_1^{r_1, r_1+2k_1}, \dots, \psi_d^{r_d,r_d+2k_d}) \co D(\Omega_1^{r_1}, \dots, \Omega_d^{r_d}) \to D(\Omega_1^{\prime r_1+2k_1}, \dots, \Omega_d^{\prime r_d + 2k_d}),
\]
ranging over all $k_1, \dots, k_d \in \Z$. Continuing with the above analogy, we may think of this operation as taking the tensor product of chain maps; the proof that it commutes with the differential and behaves well with respect to composition is again just like in the classical setting. Moreover, if $\psi_1, \dots, \psi_d$ are filtered morphisms (in the sense of Definition \ref{def: filtered}), then $D(\psi_1, \dots, \psi_d)$ is also filtered.

Chain homotopies between maps can be extended in a similar fashion, which shows that the planar algebra structure descends to $\{\CKob_{/h}(k)\}$.
\end{proof}

\begin{remark} \label{rmk: planar-commute}
In \eqref{eq: product-d}, the definition of $d_\pm$ depends on the order of the inputs in $D$ --- i.e., the indexing of the input circles of $D$ by $\Delta_1, \dots, \Delta_d$. However, different orders yield canonically isomorphic complexes; the proof is the same as in the setting of tensor products of chain complexes over a commutative ring.
\end{remark}

\begin{lemma} \label{lemma: curv-planar}
Let $D$ and $\Omega_1, \dots, \Omega_d$ be as in the proof of Theorem \ref{thm: CKob-planar}, and let $\Omega = D(\Omega_1, \dots, \Omega_d)$. Let $\lambda_i^r \co \Omega_i^r \to \Omega_i^r$ be the curvature of $\Omega_i$, i.e., $\lambda_i^r = d^{r+1}_{i,-} d^r_{i,+} + d^{r-1}_{i,+} d^r_{i,-}$. Then the curvature $\lambda^r \co \Omega^r \to \Omega^r$ is diagonal with respect to the decomposition \eqref{eq: product-Omega}, and the entry
\[
\lambda^r|_{D(\Omega_1^{r_1},\dots,\Omega_d^{r_d})} \co D(\Omega_1^{r_1},\dots,\Omega_d^{r_d}) \to D(\Omega_1^{r_1},\dots,\Omega_d^{r_d})
\]
is given by
\[
\sum_{i=1}^d D(I_1^{r_1}, \dots, \lambda_i^{r_i},  \dots I_d^{r_d}),
\]
where $I_i^r$ denotes the identity morphism of $\Omega_i^r$.
\end{lemma}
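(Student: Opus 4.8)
The plan is to compute $\lambda^r = (d_+ d_- + d_- d_+)|_{\Omega^r}$ directly from the definition \eqref{eq: product-d} of $d_\pm$ on the product complex, expanding everything in terms of the summand decomposition \eqref{eq: product-Omega} and treating the whole calculation as a ``tensor product'' bookkeeping exercise, exactly parallel to how one verifies the curvature formula for a tensor product of curved complexes. First I would fix a summand $D(\Omega_1^{r_1}, \dots, \Omega_d^{r_d})$ with $r_1 + \dots + r_d = r$ and write
\[
d_\pm|_{D(\Omega_1^{r_1},\dots,\Omega_d^{r_d})} = \sum_{i=1}^d (-1)^{\sigma_i} D(I, \dots, d_{i,\pm}^{r_i}, \dots, I),
\]
where $\sigma_i = \sum_{j<i} r_j$ and $I$ abbreviates the appropriate identity morphisms. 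Then $d_+ d_-$ and $d_- d_+$ are each double sums over pairs $(i,i')$; in the composite $d_+ d_-$ the inner $d_-$ changes $r_{i'}$ to $r_{i'}-1$ and then the outer $d_+$ acts in slot $i$, so the sign picked up by the outer factor changes by $(-1)^{-1} = -1$ precisely when $i > i'$.

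The key step is the cancellation of off-diagonal terms. For $i \ne i'$, the term in $d_+ d_-$ indexed by $(i, i')$ is $(-1)^{\sigma_i^{\pm}} (-1)^{\sigma_{i'}} D(\dots, d_{i,+}, \dots, d_{i',-}, \dots)$ (with $d_{i,+}$ and $d_{i',-}$ in slots $i$ and $i'$, identities elsewhere), and there is a matching term in $d_- d_+$ indexed by $(i', i)$ giving the same $D(\dots)$ up to a sign. A careful comparison of the two sign exponents — using that exactly one of the two compositions shifts the homological grading of the slot lying between positions $i$ and $i'$ before the second factor acts — shows the signs are opposite, so these terms cancel in $d_+ d_- + d_- d_+$. (This is the standard Koszul-sign argument for tensor products of complexes, and I would cite the parallel with the classical case rather than belabor it.) What survives is only the diagonal $i = i'$ contribution: for each $i$, the terms $(-1)^{\sigma_i}D(I,\dots,d_{i,+}^{r_i-1},\dots,I)\circ(-1)^{\sigma_i}D(I,\dots,d_{i,-}^{r_i},\dots,I)$ and the analogous $d_- d_+$ term add, and since $(-1)^{\sigma_i}(-1)^{\sigma_i} = 1$ and $D$ is multilinear in a way compatible with composition in each slot — i.e., $D(I,\dots,a,\dots,I)\circ D(I,\dots,b,\dots,I) = D(I,\dots,a\circ b,\dots,I)$ when $a,b$ occupy the same slot — this contributes $D(I_1^{r_1}, \dots, d_{i,+}^{r_i-1} d_{i,-}^{r_i} + d_{i,-}^{r_i+1} d_{i,+}^{r_i}, \dots, I_d^{r_d}) = D(I_1^{r_1}, \dots, \lambda_i^{r_i}, \dots, I_d^{r_d})$. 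Summing over $i$ gives the claimed formula, and since every surviving term preserves each $r_j$ (only the slot-$i$ factor changes grading, and it returns to $r_i$), $\lambda^r$ is diagonal with respect to \eqref{eq: product-Omega}.

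The main obstacle is purely the sign bookkeeping: one must check that when the inner factor $d_-$ in slot $i'$ lowers $r_{i'}$ by one, the prefactor $(-1)^{\sigma_i}$ attached to the outer factor $d_+$ in slot $i$ is computed using the shifted gradings, and that this is exactly what makes the off-diagonal terms cancel in pairs and the diagonal terms survive with coefficient $+1$. I expect this to go through verbatim as in the classical proof that $\operatorname{Tot}$ of a double complex (or the tensor product of two complexes with $d^2 \ne 0$) has curvature equal to the sum of the curvatures of the factors; accordingly I would keep the exposition brief, emphasizing the analogy and spelling out only the one sign comparison that drives the cancellation.
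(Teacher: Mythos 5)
Your proposal is correct and follows essentially the same route as the paper: expand $d_+d_-+d_-d_+$ on each summand $D(\Omega_1^{r_1},\dots,\Omega_d^{r_d})$ using \eqref{eq: product-d}, cancel the off-diagonal terms by the Koszul-sign comparison, and collapse the diagonal terms via the interchange law $D(I,\dots,a,\dots,I)\circ D(I,\dots,b,\dots,I)=D(I,\dots,a\circ b,\dots,I)$. Your explicit pairing --- the $(i,i')$ term of $d_+d_-$ against the $(i',i)$ term of $d_-d_+$, with the extra sign coming from whether the slot shifted by the inner factor lies below the outer slot --- is exactly the cancellation the paper's proof relies on.
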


\begin{proof}
We compute:
\begin{align*}
d_\mp &d_\pm |_{D(\Omega_1^{r_1},\dots,\Omega_d^{r_d})} \\
&= d_{\mp} \circ \sum_i (-1)^{\sum_{j<i} r_j} D(I_1^{r_1}, \dots, d_{i,\pm}^{r_i}, \dots, I_d^{r_d}) \\
&= \sum_{i=1}^d \sum_{k < i}   (-1)^{\sum_{j=k}^{i-1} r_j}
D(I_1^{r_1}, \dots, d_{k,\mp}^{r_k}, \dots, I_i^{r_i \pm 1}, \dots, I_d^{r_d}) \circ
D(I_1^{r_1}, \dots, I_k^{r_k}, \dots, d_{i,\pm}^{r_i}, \dots, I_d^{r_d}) \\
& \quad + \sum_{i=1}^d D(I_1^{r_1}, \dots, d_{i,\mp}^{r_i\pm 1}, \dots, I_d^{r_d}) \circ D(I_1^{r_1}, \dots, d_{i,\pm}^{r_i}, \dots, I_d^{r_d}) \\
& \quad +
\sum_{i=1}^d \sum_{k > i}  (-1)^{\sum_{j=i}^{k-1} r_j + 1}
D(I_1^{r_1}, \dots, I_i^{r_i\pm 1} , \dots, d_{k,\mp}^{r_k}, \dots, I_d^{r_d}) \circ
D(I_1^{r_1}, \dots, d_{i,\pm}^{r_i}, \dots, I_k^{r_k}, \dots I_d^{r_d})
\end{align*}
The two double sums cancel out, so we deduce:
\[
d_\mp d_\pm |_{D(\Omega_1^{r_1},\dots,\Omega_d^{r_d})} = \sum_{i=1}^d D(I_1^{r_1}, \dots, d_{i,\mp}^{r_i\pm 1} \circ d_{i,\pm}^{r_i}, \dots, I_d^{r_d}).
\]
and hence
\[
(d_+ d_- + d_- d_+)|_{D(\Omega_1^{r_1},\dots,\Omega_d^{r_d})} = \sum_{i=1}^d D(I_1^{r_1}, \dots, d_{i,+}^{r_i-1} d_{i,-}^{r_i} + d_{i,-}^{r_i+1} d_{i,+}^{r_i}, \dots, I_d^{r_d})
\]
as required.
\end{proof}

\section{The Batson--Seed complex of a tangle} \label{sec: BScomplex}

In this section, we extend Batson--Seed's variant of Khovanov homology to tangles, taking values in the category $\CKob$ constructed in the previous section.

Throughout this section, let $T$ be an oriented tangle, represented by a diagram in a disk $\Delta$. Let $B = \partial T$. Let us assume that the diagram has $n$ crossings, labeled $c_1, \dots, c_n$, and let $n_+$ (resp.~$n_-$) denote the number of positive (resp.~negative) crossings. For each $v \in \{0,1\}^n$, let $\abs{v} = \sum_{i=1}^n v_i$, let $T_v$ be the corresponding resolution of $T$ according to the convention shown in Figure \ref{fig: resolutions}, viewed as an unoriented tangle.

The Batson-Seed complex will require two additional pieces of data. A \emph{weighting} on $T$ consists of a choice $w$ of an element of $R$ for each component of $T$. (Note that components may be either arcs or circles.) We also need to specify a \emph{checkerboard shading} of the tangle diagram. A checkerboard shading gives rise to a \emph{sign assignment} $s$ which assigns to each crossing $c_i$ a sign as indicated in Figure \ref{fig: checkerboard}. This function is easily seen to satisfy Batson--Seed's definition of a sign assignment \cite [Section 2.2] {BatsonSeed}, since adjacent crossings have the same sign if and only if the diagram alternates over-under on the segment joining them. Reversing the checkerboard coloring negates $s$. (Batson and Seed's definition of a sign assignment is slightly more general in the case of a disconnected diagram, but we shall restrict our attention to sign assignments arising from a checkerboard coloring.)

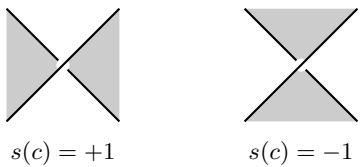
\begin{figure}
  \begin{subfigure}[b]{0.2\textwidth}
  \centering
    \begin{tikzpicture}[scale=0.15]
      \filldraw[black!20!white] (0,10) -- (5,5) -- (0,0);
      \filldraw[black!20!white] (10,10) -- (5,5) -- (10,0);
      \draw[thick] (0,10) -- (10,0);
      \node[crossing] at (5,5) {};
      \draw[thick] (0,0) -- (10,10);
    \end{tikzpicture}
    \caption*{$s(c)=+1$}
  \end{subfigure}
  \begin{subfigure}[b]{0.2\textwidth}
    \centering
    \begin{tikzpicture}[scale=0.15]
      \filldraw[black!20!white] (0,10) -- (5,5) -- (10,10);
      \filldraw[black!20!white] (0,0) -- (5,5) -- (10,0);
      \draw[thick] (0,10) -- (10,0);
      \node[crossing] at (5,5) {};
      \draw[thick] (0,0) -- (10,10);
    \end{tikzpicture}
    \caption*{$s(c)=-1$}
  \end{subfigure}
\caption{The sign assignment coming from a checkerboard shading.}
\label{fig: checkerboard}
\end{figure}

Let $T$ be an oriented, weighted, shaded tangle diagram, with weight $w$ and checkerboard shading/sign assignment $s$. We will now define a complex in $\CKob(B)$, which we call the \emph{Batson--Seed complex} of $(T,w)$, written $BS(T,w)$ or simply $BS(T)$.  (We suppress the shading from the notation.) The chain spaces and $d_+$ differential will not depend at all on the weighting or shading; indeed, they will be identical (up to a slight sign modification) to Bar--Natan's version of the Khovanov complex, thought of as an object in $\Kob(B)$. The $d_-$ differential will depend significantly on both $w$ and $s$.

\begin{definition} \label{def: BS-chain}
The piece of $BS(T,w)$ in homological grading $r-n_-$ is defined (as an object in $\Mat(\Cob^3(B))$) to be:
\begin{equation} \label{eq: BS-chain}
BS(T,w)^{r-n_-} = \bigoplus_{v \in \{0,1\}^n : \abs{v} = r} T_v\{n_+-n_-\}.
\end{equation}
That is, $BS(T,w)$ is just the sum of all the objects $T_v$ (for all $v \in \{0,1\}$), each in homological grading $\abs{v} - n_-$ and with an internal shift of $n_+ - n_-$.
\end{definition}

\begin{remark} \label{rmk: grading}
Note that Bar--Natan \cite[Definition 6.4]{BarNatanTangles} uses a shift of $r+n_+ - n_-$ for the $r$th chain space rather than $n_+ - n_-$.  The reason for this difference is that the degree in Bar--Natan's work is meant to imitate the $j$ grading from Section \ref{ssec: BSlink}, whereas here the degree captures the $l=j-i$ grading, with respect to which the Batson--Seed total differential is homogeneous. To avoid ambiguity, we will refer to the degree in this setting (i.e., the internal grading of the category $\CKob(B)$ as the \emph{Batson--Seed grading}.
\end{remark}

We now discuss the $d_+$ and $d_-$ differentials.

\subsection{The \texorpdfstring{$d_+$}{d+} differential}  \label{ssec: dplus}

Following the template from Section \ref{ssec: BSlink}, the $d_+$ differential on $BS(T,w)$ will be the ordinary Khovanov differential from \cite{BarNatanTangles}, albeit with a slight sign modification.

For each $v \lessdot v'$, let $S_{v,v'}$ be the canonical saddle cobordism from $T_v$ to $T_{v'}$, viewed as a morphism between the corresponding summands of $BS(T,w)$. As in \cite{BarNatanTangles}, if $v, v'_1, v'_2, v'' \in \{0,1\}^n$ are vectors such that $v \lessdot v'_1 \lessdot v''$ and $v \lessdot v'_2 \lessdot v''$ (so that $v$ and $v''$ differ in two indices), then we have
\begin{equation} \label{eq: saddles-commute}
S_{v'_1, v''} \circ S_{v,v'_1} = S_{v'_2, v''} \circ S_{v,v'_2}
\end{equation}
as morphisms in $\Cob^3(B)$.

We must also ``sprinkle signs'' --- i.e., assign a sign to each edge of the cube $[0,1]^n$, such that the boundary of each $2$-dimensional face has an odd number of minus signs. To do this, for an immediate successor pair $v \lessdot v'$, let $i$ be the index in which $v$ and $v'$ differ, and let $m(v,v')$ be the number of indices $j<i$ for which either
\begin{inparaenum}
  \item $v_j = 1$ and $c_j$ is a positive crossing; or
  \item $v_j = 0$ and $c_j$ is a negative crossing.
\end{inparaenum}
We then define $d_+^{r-n_{-}} \co BS(T)^{r-n_-}\to BS(T)^{r-n_-+1}$ to be the morphism in $\Mat(\Cob^3)$ whose $(v,v')$ entry is $(-1)^{m(v,v')} S_{v,v'}$ when $v \lessdot v'$, and is $0$ otherwise. (Note that this is a different sign convention than in \cite{BarNatanTangles} and elsewhere; see Section \ref{ssec: signs}.)

Observe that $d_+$ is homogeneous of degree $-1$ with respect to the Batson--Seed grading, as all saddles have degree $-1$ and the degree shifts of all terms in \eqref{eq: BS-chain} are the same.

The fact that $(d_+)^2 = 0$ then follows from \eqref{eq: saddles-commute} together with the following simple lemma:

\begin{lemma} \label{lemma: sprinkle}
If $v, v'_1, v'_2, v'' \in \{0,1\}^n$ are vectors such that $v \lessdot v'_1 \lessdot v''$ and $v \lessdot v'_2 \lessdot v''$, then \begin{equation} \label{eq: sprinkle}
m(v,v'_1) + m(v'_1,v'') \not\equiv m(v,v'_2) + m(v'_2,v'') \pmod 2.
\end{equation}
\end{lemma}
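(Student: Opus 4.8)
The plan is to set up explicit coordinates for the two pairs of indices involved and compute both sides of \eqref{eq: sprinkle} directly, tracking the contributions index by index. Suppose $v$ and $v''$ differ in indices $i_1 < i_2$, so that $v'_1$ is obtained from $v$ by flipping index $i_1$ (then $v'_1 \lessdot v''$ flips $i_2$), while $v'_2$ flips $i_2$ first (then $v'_2 \lessdot v''$ flips $i_1$). Writing $P$ for the set of indices $j$ with $v_j = 1$ and $c_j$ positive, and $N$ for the set of indices $j$ with $v_j = 0$ and $c_j$ negative, the quantity $m(v,v')$ counts indices $j$ below the flipped index that lie in the ``active'' set $P \cup N$ determined by the \emph{source} vector of that edge.

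The key observation is that flipping index $i_1$ changes whether $i_1$ itself is counted as active, but only affects the active status of index $i_1$ — no other index's status changes. So I would compare the two sums term by term. For indices $j \notin \{i_1, i_2\}$: such a $j$ contributes to $m(v,v'_1)$ iff $j < i_1$ and $j$ is active in $v$; it contributes to $m(v'_1, v'')$ iff $j < i_2$ and $j$ is active in $v'_1$, which (since $j \neq i_1$) is the same as active in $v$. Summing, $j$ contributes $[j<i_1] + [j<i_2]$ times its activity in $v$. On the other side, the same index $j$ contributes $[j<i_2] + [j<i_1]$ times its activity in $v$ to $m(v,v'_2) + m(v'_2,v'')$. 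These match, so all indices $j \notin \{i_1,i_2\}$ cancel modulo $2$.

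It remains to compare the contributions of $i_1$ and $i_2$ themselves. Index $i_2 > i_1$ never contributes to any of the four terms (it is never strictly below the flipped index in any of the four edges, since in every edge the flipped index is $\le i_2$ and $i_2$ is not strictly below itself). Index $i_1$: on the left side, $i_1$ does not contribute to $m(v,v'_1)$ (it is the flipped index, not below it), but it contributes to $m(v'_1,v'')$ iff $i_1 < i_2$ (true) and $i_1$ is active in $v'_1$. On the right side, $i_1$ contributes to $m(v,v'_2)$ iff $i_1 < i_2$ (true) and $i_1$ is active in $v$, and it does not contribute to $m(v'_2,v'')$ (it is the flipped index there). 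So the left side minus the right side, modulo $2$, equals (activity of $i_1$ in $v'_1$) $-$ (activity of $i_1$ in $v$). But $v'_1$ differs from $v$ exactly by flipping $v_{i_1}$ from $0$ to $1$: if $c_{i_1}$ is positive, this turns $i_1$ from non-active (it was in neither $P$ nor $N$, since $v_{i_1}=0$) to active (now in $P$); if $c_{i_1}$ is negative, it turns $i_1$ from active (in $N$, since $v_{i_1}=0$) to non-active. Either way the activity of $i_1$ changes, so the difference is $1 \pmod 2$, giving \eqref{eq: sprinkle}.

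I do not anticipate a genuine obstacle here — this is a bookkeeping lemma of the standard ``sign-sprinkling makes the cube anticommute'' type, and the only real content is verifying that the particular sign rule $m(v,v')$ (which depends on crossing signs, unlike Bar-Natan's simpler convention) still has the cancellation property. The one point requiring care is making sure the case analysis on whether $c_{i_1}$ is positive or negative is handled uniformly, and that indices equal to $i_1$ or $i_2$ are excluded cleanly from the ``generic'' cancellation argument; I would state the decomposition into $P$ and $N$ at the outset precisely so that this case split becomes a one-line observation rather than a scattered check. An alternative, slicker framing would be to note that $m(v,v')$ agrees with the usual Bar-Natan sprinkling applied to the relabeled cube where negative-crossing coordinates are flipped, reducing \eqref{eq: sprinkle} to the classical statement; but the direct computation above is short enough that I would likely just present it.
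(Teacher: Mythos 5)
Your proposal is correct and follows essentially the same route as the paper's proof: reduce to the single index $i_1$ (the paper's $c_i$) after observing that all other indices contribute equally to both sides, then do the case analysis on whether $c_{i_1}$ is positive or negative to see that its activity flips between $v$ and $v'_1$. Your write-up just makes the cancellation of the indices $j \notin \{i_1, i_2\}$ more explicit than the paper does.
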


\begin{proof}
Without loss of generality, assume that $v$ and $v'_1$ differ in index $i$ and that $v$ and $v'_2$ differ in index $j$, where $i < j$. The crossings $c_k$ for $k \ne i$ contribute equally to both sides of \eqref{eq: sprinkle}; we must consider the contributions to $m(v'_1,v'')$ and $m(v,v'_2)$ from $c_i$. If $c_i$ is a positive crossing, then it contributes to $m(v'_1,v'')$ but not to $m(v,v'_2)$; if $c_i$ is a negative crossing, then it contributes to $m(v,v'_2)$ but not to $m(v,v'')$. In either case, the two sides differ mod $2$.
\end{proof}

\subsection{The \texorpdfstring{$d_-$}{d-} differential} \label{ssec: dminus}

As in \cite{BatsonSeed}, the terms in the $d_-$ differential on $BS(T,w,s)$ consist of the reverse cobordisms of those in $d_+$, weighted appropriately. To be precise, for each immediate successor pair $v \lessdot v'$, let $S_{v',v}$ be the reverse of the cobordism $S_{v,v'}$ from above, viewed as a saddle cobordism from $T_{v'}$ to $T_v$. The morphism
\[
d_-^{r} \co BS(T)^{r}\to BS(T)^{r-1}
\]
is then defined to be the matrix of all the morphisms
\begin{equation} \label{eq: dminus-term}
(-1)^{m(v,v')} s(c_i) (w^{i}_{\text{over}}-w^{i}_{\text{under}}) S_{v',v},
\end{equation}
where $i$ is the index in which $v$ and $v'$ differ, and $w^{i}_{\text{over}}$ and $w^{i}_{\text{under}}$ are the weights of the overstrand and understrand at the crossing $c_i$. Observe that $d_-$ is also homogeneous of degree $-1$.

The proof that $(d_-)^2=0$ proceeds just as with $d_+$, keeping track of the extra information in \eqref{eq: dminus-term}. (See Case 1 in the proof of \cite[Proposition 2.2]{BatsonSeed}.) This completes the verification that $(BS(T,w,s),d_+, d_-)$ is a valid object of $\CKob^3(B)$. Note that we do not necessarily have $d^2=0$; we will return to this in Section \ref{ssec: curvature}.

\begin{remark} \label{rmk: change-shading}
From \eqref{eq: dminus-term}, we see that changing the checkerboard shading on $T$ has the same effect on $d_-$ as multiplying all weights by $-1$.  Thus, when arbitrary weights are being considered, it often suffices to examine only one choice of shading.
\end{remark}

\subsection{Local description} \label{localstory}

In this section we will use the planar algebra structure of $\CKob$ (from Section \ref{ssec: planar}) to provide a completely local description of the $BS$ complex.

Let $D$ be a $d$-input oriented planar arc diagram as in Section \ref{ssec: planar}, and let $T_1, \dots, T_d$ be oriented tangles that can be inserted to the inputs of $D$ consistent with orientations. Let $T = D(T_1, \dots, T_d)$ denote the resulting tangle. We say that a collection of weightings (resp.~shadings) on $T_1, \dots, T_d$ is \emph{compatible} with $D$ if there is a weighting (resp.~shading) on $T$ that restricts to the chosen one on each $T_i$.

\begin{figure}
\begin{tabular}{|c|c|c|} \hline
$s(c)$ & Positive crossing & Negative crossing \\ \hline
$1$ &
\begin{tikzpicture}
\node[](poscross) at (-2,0) {
\begin{tikzpicture}
      \filldraw[black!20!white] (0,1) -- (.5,.5) -- (0,0);
      \filldraw[black!20!white] (1,1) -- (.5,.5) -- (1,0);
      \draw [->,thick] (1,0) -- (0,1); 
      \node[crossing] at (.5,.5) {};
      \draw [->,thick] (0,0) -- (1,1); 
      \node[] at (-0.1,.8) {$a$};
      \node[] at (1.1,.8) {$b$};
\end{tikzpicture}};
\node[label=north:{$0$}](0res) at (0,0) {\vres} ;
\node[label=north:{$1$}](1res) at (3,0) {\hres} ;
\draw [->] (0res.10) -- (1res.170) node[midway, above] {\usebox{\vsaddlebox}};
\draw [->] (1res.190) -- (0res.350) node[midway, below] {$(b-a)\usebox{\hsaddlebox}$};
\end{tikzpicture} &
\begin{tikzpicture}
\node[](negcross) at (-2,0) {
\begin{tikzpicture}
      \filldraw[black!20!white] (0,1) -- (.5,.5) -- (0,0);
      \filldraw[black!20!white] (1,1) -- (.5,.5) -- (1,0);
      \draw [->,thick] (0,1) -- (1,0); 
      \node[crossing] at (.5,.5) {};
      \draw [->,thick] (0,0) -- (1,1); 
      \node[] at (-0.1,.8) {$a$};
      \node[] at (1.1,.8) {$b$};
\end{tikzpicture}};
\node[label=north:{$-1$}](0res) at (0,0) {\vres} ;
\node[label=north:{$0$}](1res) at (3,0) {\hres} ;
\draw [->] (0res.10) -- (1res.170) node[midway, above] {\usebox{\vsaddlebox}};
\draw [->] (1res.190) -- (0res.350) node[midway, below] {$(b-a)\usebox{\hsaddlebox}$};
\end{tikzpicture}
 \\ \hline
$-1$ &
\begin{tikzpicture}
\node[](poscross) at (-2,0) {
\begin{tikzpicture}
      \filldraw[black!20!white] (0,1) -- (.5,.5) -- (1,1);
      \filldraw[black!20!white] (0,0) -- (.5,.5) -- (1,0);
      \draw [->,thick] (1,0) -- (0,1); 
      \node[crossing] at (.5,.5) {};
      \draw [->,thick] (0,0) -- (1,1); 
      \node[] at (-0.1,.8) {$a$};
      \node[] at (1.1,.8) {$b$};
\end{tikzpicture}};
\node[label=north:{$0$}](0res) at (0,0) {\vres} ;
\node[label=north:{$1$}](1res) at (3,0) {\hres} ;
\draw [->] (0res.10) -- (1res.170) node[midway, above] {\usebox{\vsaddlebox}};
\draw [->] (1res.190) -- (0res.350) node[midway, below] {$(a-b)\usebox{\hsaddlebox}$};
\end{tikzpicture} &
\begin{tikzpicture}
\node[](negcross) at (-2,0) {
\begin{tikzpicture}
      \filldraw[black!20!white] (0,1) -- (.5,.5) -- (1,1);
      \filldraw[black!20!white] (0,0) -- (.5,.5) -- (1,0);
      \draw [->,thick] (0,1) -- (1,0); 
      \node[crossing] at (.5,.5) {};
      \draw [->,thick] (0,0) -- (1,1); 
      \node[] at (-0.1,.8) {$a$};
      \node[] at (1.1,.8) {$b$};
\end{tikzpicture}};
\node[label=north:{$-1$}](0res) at (0,0) {\vres} ;
\node[label=north:{$0$}](1res) at (3,0) {\hres} ;
\draw [->] (0res.10) -- (1res.170) node[midway, above] {\usebox{\vsaddlebox}};
\draw [->] (1res.190) -- (0res.350) node[midway, below] {$(a-b)\usebox{\hsaddlebox}$};
\end{tikzpicture}
 \\ \hline
\end{tabular}

\caption{Batson--Seed complexes associated to the four possible orientations and shadings of a one-crossing, four-end tangle. The numbers above the resolutions indicate the homological grading, and $s(c)$ denotes the value of the sign assignment at the crossing.}
\label{fig: 1crossing}
\end{figure}

\begin{theorem} \label{thm: BS-local}
Let $D$ be a $d$-input oriented planar arc diagram, and let $T_1, \dots,T_d$ be oriented, weighted, shaded tangles that are compatible with $D$. Then
\[
BS(D(T_1, \dots,T_d))=D(BS(T_1), \dots,BS(T_d)).
\]
\end{theorem}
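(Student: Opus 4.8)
The plan is to prove the two sides are equal as objects of $\CKob(B)$ by a direct comparison: identify the underlying chain objects via \eqref{eq: product-Omega}, and then check that $d_+$ and $d_-$ on $BS(D(T_1,\dots,T_d))$ agree, entry by entry, with the differentials on $D(BS(T_1),\dots,BS(T_d))$ given by \eqref{eq: product-d}.

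First I would set up notation and handle the underlying objects. I would order the $n$ crossings of $T := D(T_1,\dots,T_d)$ block by block (all crossings of $T_1$, then those of $T_2$, and so on), invoking Remark~\ref{rmk: planar-commute} (and the analogous independence statement for $BS$) so that this choice does not matter up to canonical isomorphism. This identifies $\{0,1\}^n$ with $\prod_i \{0,1\}^{n_i}$; since taking the $0$- or $1$-resolution of a crossing is a local modification, $T_v = D((T_1)_{v_1},\dots,(T_d)_{v_d})$ for $v = (v_1,\dots,v_d)$. Each crossing of $T$ is a crossing of exactly one $T_i$ with the same sign, so $n_+$ and $n_-$ are additive, hence so are the homological grading $\abs{v} - n_-$ and the internal shift $n_+ - n_-$. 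Combined with the fact that $D$ adds grading shifts, this shows that the $r$-th chain object of $BS(T)$ in \eqref{eq: BS-chain} is $\bigoplus_{r = r_1 + \dots + r_d} D(BS(T_1)^{r_1},\dots,BS(T_d)^{r_d})$, exactly as in \eqref{eq: product-Omega}.

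Next I would check that the differentials match, term by term. Fix an immediate successor pair $v \lessdot v'$ differing at a crossing $c$ lying in block $T_m$; then $v = (v_1,\dots,v_d)$ with $v_k = v'_k$ for $k \ne m$ and $v_m \lessdot v'_m$. The canonical saddle is supported near $c$, so $S_{v,v'} = D(I,\dots,S_{v_m,v'_m},\dots,I)$ (identities in the other slots), and likewise $S_{v',v} = D(I,\dots,S_{v'_m,v_m},\dots,I)$; moreover $s(c)$, $w^c_{\mathrm{over}}$, $w^c_{\mathrm{under}}$ are read off near $c$ and, since "compatible with $D$" means the shading and weighting of $T$ restrict to those of $T_m$, coincide with the corresponding data for $T_m$. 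Thus the $(v',v)$-entry \eqref{eq: dminus-term} of $d_-$ on $BS(T)$ equals $(-1)^{m(v,v')} s(c)(w^c_{\mathrm{over}} - w^c_{\mathrm{under}})\, D(I,\dots,S_{v'_m,v_m},\dots,I)$, and similarly for the $(v,v')$-entry of $d_+$. The only remaining point is to match $(-1)^{m(v,v')}$ with the Koszul sign $(-1)^{\sum_{k<m} r_k}$ of \eqref{eq: product-d} (where $r_k = \abs{v_k} - (n_k)_-$) times the internal sign $(-1)^{m_m(v_m,v'_m)}$ built into $d_{m,\pm}$ on $BS(T_m)$. I would deduce this from the observation that, for any $u \in \{0,1\}^n$, a crossing $c_j$ contributes $+1$ to the homological grading of its own block exactly when $u_j = 1$ and $c_j$ is positive, $-1$ exactly when $u_j = 0$ and $c_j$ is negative, and $0$ otherwise; hence $r_k \bmod 2$ equals the parity of the number of indices $j$ in block $k$ satisfying one of the two conditions in the definition of $m$, so with the block ordering in force
\[
m(v,v') \ \equiv\ \Bigl( \sum_{k<m} r_k \Bigr) + m_m(v_m,v'_m) \pmod 2 .
\]
Therefore $(-1)^{m(v,v')} S_{v,v'} = (-1)^{\sum_{k<m} r_k}\, D(I,\dots,(-1)^{m_m(v_m,v'_m)} S_{v_m,v'_m},\dots,I)$, which is precisely the $(v,v')$-entry of $d_+$ produced by \eqref{eq: product-d}; the identical computation handles $d_-$, and summing over all immediate successor pairs finishes the proof.

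The hard part will be exactly this last sign comparison: reconciling $m(v,v')$ with the Koszul sign of the planar ("tensor-product") differential \eqref{eq: product-d}. This is precisely the reason the paper adopts the modified edge-sign convention of Section~\ref{ssec: signs} instead of the one in \cite{BarNatanTangles} — the modification is chosen so that $m$ becomes additive in exactly the way needed here. Everything else is routine: locality of resolutions and of saddle (and reverse-saddle) cobordisms, locality of the weight and shading data, additivity of the two gradings, and the bookkeeping with grading shifts under $D$. One should also remember to check that a checkerboard shading of $T$ restricts to a genuine checkerboard shading of each $T_m$ (which is what compatibility supplies, since two regions of $T_m$ adjacent across an arc are parts of adjacent, hence oppositely shaded, regions of $T$), and to use the block ordering of crossings (legitimate by Remark~\ref{rmk: planar-commute}) so that $m_m$ is computed with respect to the order inherited by the crossings of $T_m$.
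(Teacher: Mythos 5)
Your proposal is correct and follows essentially the same route as the paper: the heart of both arguments is the observation that a crossing $c_j$ contributes to the parity of the homological grading of its block exactly when it contributes to $m(v,v')$, so that $(-1)^{m(v,v')}$ matches the Koszul sign of \eqref{eq: product-d}. The only difference is presentational — the paper first reduces to single-crossing input tangles via associativity of the planar algebra structure, which turns your general parity computation $r_k \equiv p_k - q_k \pmod 2$ into a one-line reading of Figure \ref{fig: 1crossing} — and your appeal to order-independence of the crossing labeling is legitimately grounded in Lemma \ref{lemma: sprinkle-indep} rather than being circular.
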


\begin{proof}
 This proof mimics the proof of \cite[Theorem 2]{BarNatanTangles}, making use of the planar algebra structure of $\CKob$ discussed in Section \ref{ssec: planar}. Just as in that proof, it suffices to assume that each tangle $T_i$ consists of two strands crossing once; the general case then follows from the associativity of the planar algebra structure. Taking shadings and orientation into account, there are four possibilities for $BS(T_i)$, shown in Figure \ref{fig: 1crossing}.

Equality on the level of chain spaces is immediate: each resolution of $T$ is obtained by applying $D$ to resolutions of $T_1, \dots, T_d$ in a unique way. Likewise, each of the cobordisms $S_{v,v'}$ (or $S_{v',v}$) in the definition of $d_\pm$ for $T$ is obtained by extending a saddle cobordism between resolutions of some $T_i$ by the identity on the remaining tangles.

It remains to verify that the signs agree. Consider the term of $d_+$ corresponding to an immediate successor pair $v \lessdot v'$ differing in the $i\Th$ entry. By definition, the sign that appears in the differential on $BS(D(T_1, \dots,T_d))$ is $(-1)^{m(v,v')}$, while the sign in the differential on $D(BS(T_1), \dots, BS(T_d))$ is, according to \eqref{eq: product-d}, $(-1)^{\sum_{j<i} r_j}$, where $r_j$ denotes the homological grading of the resolution of $T_j$ being used. As seen in Figure \ref{fig: 1crossing}, for a single-crossing tangle, the $1$-resolution of a positive crossing and the $0$-resolution of a negative crossing are the ones that lie in odd homological grading. Thus, we have $\sum_{j<i} r_j \equiv m(v,v') \pmod 2$, as required. The same analysis holds for terms of $d_-$, using the fact that the factors $s(c_i)$ and $w^{i}_{\text{over}}-w^{i}_{\text{under}}$ occurring in \eqref{eq: dminus-term} both depend only on the local picture in $T_i$.
\end{proof}

As a consequence of Theorem \ref{thm: BS-local}, we obtain a method of extending chain maps:

\begin{theorem} \label{thm: BS-extend}
Suppose $T_1, T_2$ are oriented, weighted, shaded tangle diagrams that agree (including in their orientations, weightings, and shadings) outside a disk $\Delta$, and let $T_1' = T_1 \cap \Delta$ and $T_2' = T_2 \cap \Delta$, with induced orientations, weightings, and shadings. Then:
\begin{enumerate}
\item
For any chain map $f \co BS(T_1') \to BS(T_2')$, there is a natural extension $f \co BS(T_1) \to BS(T_2)$.

\item
If $f, f' \co BS(T_1') \to BS(T_2')$ are homotopic, then so are their extensions.
\end{enumerate}
\end{theorem}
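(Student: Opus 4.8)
The plan is to deduce Theorem~\ref{thm: BS-extend} directly from the local description of the $BS$ complex established in Theorem~\ref{thm: BS-local}, together with the planar-algebra machinery of Theorem~\ref{thm: CKob-planar}. The starting observation is purely topological: since $T_1$ and $T_2$ agree (with all decorations) outside $\Delta$, they can both be exhibited as $D$ applied to the same list of inputs except in one slot. Concretely, let $D$ be a $1$-input planar arc diagram whose interior disk is $\Delta$ and whose ambient tangle is the common part $T_1 \cap (\Delta' \minus \Delta) = T_2 \cap (\Delta' \minus \Delta)$, where $\Delta'$ is the disk containing the whole diagram. Then $T_1 = D(T_1')$ and $T_2 = D(T_2')$, and the hypothesis that the orientations, weightings, and shadings match outside $\Delta$ is exactly the statement that $T_1'$ and $T_2'$ are compatible with $D$. (More generally one may take a $d$-input diagram with one distinguished input being $\Delta$ and the others filled with fixed tangles; I will phrase it with a single input for simplicity, since the extra inputs are just carried along by the identity.)

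For part (1), I would apply Theorem~\ref{thm: BS-local} to get canonical identifications $BS(T_1) = D(BS(T_1'))$ and $BS(T_2) = D(BS(T_2'))$ in $\CKob(B)$. Now $D$ is a $1$-input operation, so the multilinear map on morphisms from Theorem~\ref{thm: CKob-planar} specializes to a genuine functorial operation on chain maps: given $f \co BS(T_1') \to BS(T_2')$, we obtain $D(f) \co D(BS(T_1')) \to D(BS(T_2'))$, which under the above identifications is a chain map $BS(T_1) \to BS(T_2)$. This is the desired extension, and I would note that it is ``natural'' in the sense that it respects composition and sends identities to identities, again because $D(-)$ does so at the level of morphisms (this is part of the planar-algebra axioms verified in Theorem~\ref{thm: CKob-planar}). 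For part (2), I would invoke the last sentence of the proof of Theorem~\ref{thm: CKob-planar}: the $D$ operation preserves chain homotopies. So if $h$ is a homotopy from $f$ to $f'$, then $D(h)$ is a homotopy from $D(f) = f$ to $D(f') = f'$ (abusing notation to conflate a map with its extension), which is exactly the claim.

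The only genuinely nontrivial point — and the one I would want to state carefully rather than wave at — is the compatibility of the identifications in Theorem~\ref{thm: BS-local} with the $D$-operation on morphisms, i.e.\ that $D(f)$ really is a chain map between the \emph{correctly identified} complexes, with matching signs and grading shifts. But this is not new work: it is precisely the content of the sign-matching argument in the proof of Theorem~\ref{thm: BS-local} (comparing $(-1)^{m(v,v')}$ with $(-1)^{\sum_{j<i} r_j}$), applied now to the specific $D$ of this situation, and the grading shifts $\{n_+ - n_-\}$ add under $D$ by construction since the degree is additive under stacking. So the proof is essentially a two-line argument: assemble the diagram outside $\Delta$ into a planar arc diagram $D$, then push $f$ and $h$ through $D$ using Theorems~\ref{thm: BS-local} and~\ref{thm: CKob-planar}. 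The main thing to watch is that one is allowed to use an arbitrary (not just connected) exterior tangle, and that the compatibility of weightings/shadings is automatic from the hypothesis; both are handled by the definition of ``compatible with $D$'' and by Theorem~\ref{thm: BS-local} as stated.
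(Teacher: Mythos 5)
Your approach is the paper's approach: identify the ambient diagram as a planar arc diagram and push $f$ and $h$ through the operation $D(-)$ of Theorems~\ref{thm: BS-local} and~\ref{thm: CKob-planar}. There is, however, one misstep in the way you set it up. A planar arc diagram is by definition a crossingless, properly embedded $1$-manifold in the complement of the input disks, so your ``$1$-input diagram whose ambient tangle is the common part of $T_1$ and $T_2$ outside $\Delta$'' is not a valid planar arc diagram whenever $T_1 \minus \Delta$ contains crossings, which is the generic situation. Consequently the identification $BS(T_i) = D(BS(T_i'))$ that your main argument rests on is not available. The fix is exactly the construction you relegate to a parenthetical and describe as an optional generality: one must choose small disks around each crossing of $T_1$ lying outside $\Delta$, take $D$ to be the resulting $(m+1)$-input planar arc diagram (which is now genuinely crossingless in its exterior region), write $T_i = D(T_i', T_{c_1}, \dots, T_{c_m})$ with the $T_{c_j}$ single-crossing tangles, and extend $f$ as $D(f, \id, \dots, \id)$. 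This is precisely what the paper does. So the content of your proof is correct, but the ``simple'' $1$-input version should be discarded (or restricted to the case where $T_1$ and $T_2$ have no crossings outside $\Delta$) and the multi-input version promoted from aside to main argument. The remaining points you raise --- compatibility of signs and grading shifts, and preservation of homotopies --- are handled correctly by citing the sign-matching in Theorem~\ref{thm: BS-local} and the last part of Theorem~\ref{thm: CKob-planar}.
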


\begin{proof}
This follows directly from Theorem \ref{thm: BS-local} and Theorem \ref{thm: CKob-planar}. To be precise, we may choose small disks around each of the crossings of $T_1$ that are not in $\Delta$, and view the remainder of $T$ as a planar arc diagram $D$. We then extend $f$ by taking its planar algebra product (as in the proof of Theorem \ref{thm: CKob-planar}) with the identity morphisms of the $BS$ complexes of the single-crossing tangles. The proof for homotopies is similar.
\end{proof}

\subsection{Curvature terms} \label{ssec: curvature}

With the local story in hand, we now show that the curvature terms in $BS(T)$ have a particularly nice form when we make use of the dotted cobordism category.

\begin{figure}
\centering
\begin{subfigure}[b]{0.3\textwidth}
\centering
\begin{tikzpicture}[scale=2]
\filldraw[black!20!white] (0,.5) -- (0,1) arc (90:60:1) -- (.5,.5);
\draw (0,1) arc (90:120:1);
\draw (0,1) arc (90:60:1);
\draw[thick] (0,.5) -- (0,1);
\node[blackdot] at (0,1) {};
\end{tikzpicture}
\caption*{$s(p)=1$}
\end{subfigure}
~
\begin{subfigure}[b]{0.3\textwidth}
\centering
\begin{tikzpicture}[scale=2]
\filldraw[black!20!white] (0,.5) -- (0,1) arc (90:120:1) -- (-.5,.5);
\draw (0,1) arc (90:120:1);
\draw (0,1) arc (90:60:1);
\draw[thick] (0,.5) -- (0,1);
\node[blackdot] at (0,1) {};
\end{tikzpicture}
\caption*{$s(p)=-1$}
\end{subfigure}
\caption{Signs associated to a boundary point $p \in B$.}
\label{fig: boundary-sign}
\end{figure}
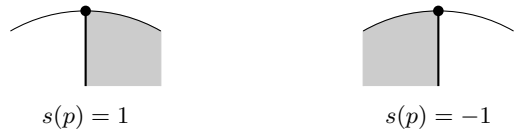

Given an oriented, weighted, shaded, oriented tangle $T$ with boundary $B$, let us assign signs to the boundary points $p \in B$ as follows:  let $s(p) = + 1$ (resp.~$s(p)=-1$) if, as we traverse the boundary of the disk counterclockwise, we pass from shaded to unshaded (resp.~unshaded to shaded) at $p$, as seen in Figure \ref{fig: boundary-sign}. Let $w(p)$ denote the weight of the strand containing $p$. Let $X_p \co BS(T,w) \to BS(T,w)$ denote the chain map that consists of, for each $v \in \{0,1\}^n$, the identity cobordism $T_v \times [0,1]$ with a single dot placed on the component containing $p$.

\begin{proposition} \label{prop: BS-curv}
Let $T$ be an oriented, weighted, shaded tangle diagram with boundary $B$, and consider the complex $BS(T)$ as an object in $\CKob_\bullet(B)$. Then the curvature $\lambda_T$ of $BS(T)$ is diagonal with respect to the decomposition $BS(T) = \bigoplus_{v \in \{0,1\}^n} T_v$, and
\begin{equation} \label{eq: BS-curv}
\lambda_T|_{T_v} = \sum_{p \in B} s(p) w(p) X_p.
\end{equation}
\end{proposition}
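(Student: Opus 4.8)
The plan is to reduce the statement to the single-crossing case using the planar algebra structure, and then verify it there by a direct local computation. By Theorem~\ref{thm: BS-local}, we may write $T = D(T_1, \dots, T_d)$, where each $T_i$ is a one-crossing, four-end tangle and $D$ is a suitable planar arc diagram, so that $BS(T) = D(BS(T_1), \dots, BS(T_d))$. Lemma~\ref{lemma: curv-planar} then expresses the curvature $\lambda_T$ in terms of the curvatures $\lambda_{T_i}$ of the individual pieces: $\lambda_T$ is diagonal, and on each summand it is a sum over $i$ of the curvatures $\lambda_{T_i}$ extended by the identity on the remaining inputs. Thus, granting the proposition for single-crossing tangles (and also for the even simpler case of a crossingless tangle, where the curvature is trivially zero and there are no boundary points contributing nontrivially — actually one must be a little careful here, since crossingless strips still have boundary points, but for a crossingless tangle $BS(T)$ has a single chain group and $d_\pm = 0$, so $\lambda_T = 0$; the claimed formula must therefore be interpreted as holding after summing contributions across the whole planar decomposition), the general case follows by additivity of both sides under the planar algebra operation. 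Concretely, the boundary points $p \in B$ of $T$ are distributed among the internal circles of $D$, and a dot on the component of $T_v$ through $p$ corresponds, under $D$, to a dot on the corresponding component of the appropriate $(T_i)_{v_i}$; matching up $s(p)$ and $w(p)$ requires only that these quantities are determined by the local picture, which is clear from their definitions.

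The heart of the argument is therefore the single-crossing computation. Fix one of the four tangles $T_i$ of Figure~\ref{fig: 1crossing}, with strands weighted $a$ and $b$. Here $BS(T_i)$ is a two-term curved complex with $d_+$ the saddle $S_{01}$ (degree $-1$) and $d_-$ equal to $\pm(b-a)$ times the reverse saddle $S_{10}$ (the sign depending on $s(c)$). The curvature is the single map $\lambda = d_+ d_- + d_- d_+$ on the two chain groups. I would compute $d_+ d_- = \pm(b-a)\, S_{01} \circ S_{10}$ and $d_- d_+ = \pm(b-a)\, S_{10} \circ S_{01}$ directly as dotted cobordisms, using the neck-cutting relation of Figure~\ref{fig: local-relations}: a saddle composed with its reverse is a tube, and neck-cutting that tube rewrites it as a sum of two identity cobordisms, each carrying a single dot on one of the two boundary components of the tube. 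This is exactly the mechanism by which $X_p$-type maps appear. Tracking which boundary point each dot lands on, and bookkeeping the signs $(-1)^{m(v,v')}$ from $d_+$ and $d_-$ together with the factor $s(c)$ and the sign of $b-a$ versus $a-b$, should yield precisely $\sum_{p \in B} s(p)\, w(p)\, X_p$ restricted to each of the two resolutions. I expect the four cases of Figure~\ref{fig: 1crossing} to collapse into essentially one computation once the definitions of $s(p)$ (Figure~\ref{fig: boundary-sign}) and $s(c)$ (Figure~\ref{fig: checkerboard}) are unwound, since changing the shading negates both $s(c)$ and all the $s(p)$ simultaneously (cf.\ Remark~\ref{rmk: change-shading}), and reversing the crossing swaps which resolution sits in which homological grading without altering the cobordisms.

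The main obstacle I anticipate is the sign reconciliation: there are several independent sign conventions in play — the edge signs $m(v,v')$ in $d_\pm$, the sign assignment $s(c)$ multiplying the weight difference, the choice between $b-a$ and $a-b$ dictated by the shading, and the boundary signs $s(p)$ — and one must check that they conspire to give the clean formula \eqref{eq: BS-curv} uniformly, with the \emph{same} sign on both resolutions of the crossing and consistently across all four local models. A secondary subtlety is making the neck-cutting step precise: one must confirm that the two dots produced by neck-cutting $S_{01}\circ S_{10}$ sit on the components of the resolved tangle that genuinely contain the relevant boundary points of $B$, and that after extending by $D$ these match the global boundary points of $T$. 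Both of these are routine in principle but require careful diagram-chasing; everything else (the planar-algebra reduction, the diagonality of $\lambda_T$) is immediate from the cited results.
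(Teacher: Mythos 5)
Your proposal follows essentially the same route as the paper: verify the formula for a single-crossing tangle by writing the curvature as $s(c)(b-a)$ times the tube $S_{01}\circ S_{10}$ and applying neck-cutting to convert it into the two dotted identity cobordisms, then assemble the general case from Theorem~\ref{thm: BS-local} and Lemma~\ref{lemma: curv-planar}. The one detail you gloss over in the planar-algebra step is that Lemma~\ref{lemma: curv-planar} produces a sum over \emph{all} boundary points of \emph{all} the single-crossing pieces, and the terms at two points joined by an arc of the planar diagram cancel in pairs because $s(p)=-s(p')$ there, leaving exactly the contributions from points connected out to $B$ — this cancellation is the paper's closing bookkeeping and is what makes \eqref{eq: BS-curv} come out cleanly (and also handles crossingless strands, whose two endpoints in $B$ cancel each other).
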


\begin{proof}
We begin by considering the case where $T$ is a single-crossing tangle, as shown in Figure \ref{fig: 1crossing}. Let $s(c)$ be the sign associated to the crossing by Figure \ref{fig: checkerboard}, and let $a$ and $b$ denote the weights of the understrand and overstrand, respectively, as shown. Note that the two points in $B$ with $w(p)=b$ have $s(p)=s(c)$, and the two points with $w(p) = a$ have $s(p) = -s(c)$.

In either case, the curvature, restricted to each $T_v$, is then equal to $s(c)(b-a)$ times the composite of two saddle cobordisms, which is obtained by tubing together the two components of $T_v \times [0,1]$. By the neck-cutting relation from Figure \ref{fig: local-relations}, this cobordism is equal (in $\Mor_{\Cob^3_{\bullet/l}}$) to the sum of the two ways of putting a dot on either component of $T_v \times [0,1]$. Distributing out the multiplication, we see that $\lambda_T$ has the desired form.

For the general case, let $T$ be a tangle diagram with $n$ crossings. We may think of $T$ as obtained by inserting $1$-crossing tangles $T_1, \dots, T_n$ into the inputs of a planar arc diagram $D$. By Theorem \ref{thm: BS-local}, we have $BS(T) = D(BS(T_1), \dots, BS(T_n))$. By Lemma \ref{lemma: curv-planar}, the $\lambda_T$ is diagonal with respect to the decomposition $BS(T) = \bigoplus_v T_v$, and
\begin{equation} \label{eq: BS-curv2}
\lambda_T|_{T_v} = \sum_{p \in \partial T_1 \cup \dots \cup \partial T_n} s(p) w(p) X_p,
\end{equation}
where the signs $s(p)$ come from the boundary orientations of the disks containing the small tangles $T_i$.
If $p$ and $p'$ are points of $\partial T_1 \cup \dots \partial T_n$ that are connected by an arc of $D$, then $s(p)=-s(p')$, so the corresponding terms of \eqref{eq: BS-curv2} cancel. Likewise, for $p, p' \in \partial T$, the corresponding terms in \eqref{eq: BS-curv} cancel. On the other hand, if $p \in \partial T_1 \cup \dots \partial T_n$ and $p' \in \partial T$ are connected by an arc of $D$, then $p$ contributes equally to the sums in \eqref{eq: BS-curv} and \eqref{eq: BS-curv2}, which completes the proof.
\end{proof}

As an immediate consequence of Proposition \ref{prop: BS-curv}, we see that when $L$ is a link, the total differential $d$ on $BS(L,w)$ satisfies $d^2=0$, as shown by Batson and Seed \cite[Proposition 2.2]{BatsonSeed} for the original version of their chain complex. As we noted in Remark \ref{rmk: curved}, this is akin to obtaining an honest chain complex from a tensor product of matrix factorizations, as in \cite{KhovanovRozansky1} and elsewhere.

\subsection{More on signs} \label{ssec: signs}

We now elaborate on our convention for sprinkling signs, as discussed in Section \ref{ssec: dplus}, and how it differs from the convention elsewhere in the literature. (The casual reader may safely skip this section.)

In place of our $m(v,v')$, Bar-Natan uses the quantity $m'(v,v') := \sum_{j<i} v_j$ when defining the sign associated to each edge of the cube $[0,1]^n$ in the Khovanov complex, and this convention has become standard in the literature. As seen in the proof of Theorem \ref{thm: BS-local} above, our sign convention is necessary in order to guarantee that the signs agree when taking planar algebra products using \eqref{eq: product-d}.

Without the shift of $-n_-$ in the homological grading, the complex defined using $m'(v,v')$ would behave correctly under planar algebra products. Indeed, in Bar-Natan's expository paper on Khovanov homology \cite{BarNatanCategorification}, he defines a (classical) complex $\Brackets{L}$ for any unoriented link diagram $L$, using the $m'$ sign convention, with no shift in the homological or quantum gradings; he then defines the Khovanov complex of an oriented link to be obtained from $\Brackets{L}$ by an overall shift in both gradings. (This approach is motivated by the relationship between the Kauffman bracket and the Jones polynomial.) In \cite{BarNatanTangles}, Bar-Natan likewise defines $\Brackets{T}$ for an unoriented tangle diagram $T$, again using the $m'$ convention, but with the homological grading shift of $-n_-$ already built in. If one instead defines $\Brackets{T}$ without the $-n_-$ shift, then the signs work out exactly right so that $\Brackets{ \, }$ behaves nicely with respect to planar algebra products, as stated in \cite[Theorem 2]{BarNatanTangles}. (With the shifts, the theorem still holds as a statement of isomorphism of complexes, but the isomorphism has some ``$-$" signs built in.)

In any event, the choice of how to sprinkle signs is in some sense immaterial up to isomorphism. Indeed, we have a more general principle, which is well-known to experts (see, e.g., \cite[Lemma 4.5]{ClarkMorrisonWalkerFunctoriality}, \cite[Lemma 2.2]{OzsvathRasmussenSzaboOdd}, and, in a more general setting, \cite[Theorem 6.6]{ChandlerPosets}):

\begin{lemma} \label{lemma: sprinkle-indep}
Let $m$ be any function on the set of all immediate successor pairs in $\{0,1\}^n$ that satisfies \eqref{eq: sprinkle}, and let $d_+^m$ and $d_-^m$ be the differentials on $BS(L,w)$ defined as above using the signs $(-1)^{m(v,v)}$ where applicable. Then the isomorphism type of $(BS(L,w), d_+^m, d_-^m)$ (in $\CKob^f$) is independent of $m$.
\end{lemma}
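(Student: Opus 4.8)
The plan is to write down an explicit filtered isomorphism between the two complexes, given by a sign on each summand. Fix two functions $m$ and $m'$ on immediate successor pairs, each satisfying \eqref{eq: sprinkle}, and for every $v \lessdot v'$ set $\varepsilon(v,v') = (-1)^{m(v,v')}$ and $\varepsilon'(v,v') = (-1)^{m'(v,v')}$. The point of \eqref{eq: sprinkle} is exactly that, for every $2$-dimensional face of the cube $\{0,1\}^n$, the product of $\varepsilon$ over its four edges equals $-1$, and likewise for $\varepsilon'$; hence the pointwise product $\delta(v,v') := \varepsilon(v,v')\,\varepsilon'(v,v')$ has product $+1$ around every $2$-face. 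Equivalently, $\delta$ is a $\Z/2$-valued $1$-cocycle on the cube.

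First I would invoke the standard fact that the cube $\{0,1\}^n$, regarded as a CW complex, is contractible, hence simply connected, so every $\Z/2$-valued $1$-cocycle on it is a coboundary: there is a function $\eta \co \{0,1\}^n \to \{\pm 1\}$ with $\delta(v,v') = \eta(v)\,\eta(v')$ for every edge $v \lessdot v'$. Concretely, one sets $\eta$ equal to $1$ at the all-zeros vertex and propagates along edges; the value is independent of the chosen edge-path because any two paths with the same endpoints differ by a finite sequence of $2$-face ``flips,'' each of which leaves the accumulated $\delta$-product unchanged by the cocycle property.

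Next, I would define $\Phi \co (BS(L,w), d_+^m, d_-^m) \to (BS(L,w), d_+^{m'}, d_-^{m'})$ to be the morphism acting as $\eta(v) \cdot \id$ on the summand $L_v\{n_+ - n_-\}$ indexed by $v$, for every $v \in \{0,1\}^n$. Since $\Phi$ is homogeneous of degree $0$ and preserves the homological grading, all its nonzero matrix entries are of the form $\Phi^{r,r}_0$, so $\Phi$ is filtered in the sense of Definition \ref{def: filtered}; and because $\eta(v)^2 = 1$, it is its own inverse, hence an isomorphism in $\CKob^f$. It then remains to check that $\Phi$ is a chain map, i.e.\ that $\Phi d_+^m = d_+^{m'} \Phi$ and $\Phi d_-^m = d_-^{m'} \Phi$. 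Comparing the $(v,v')$-entries of both sides for $v \lessdot v'$, the $d_+$-identity reduces to $\eta(v')\,\varepsilon(v,v') = \eta(v)\,\varepsilon'(v,v')$, which holds because $\eta(v)\eta(v') = \delta(v,v') = \varepsilon(v,v')\varepsilon'(v,v')$ and all quantities are $\pm 1$; the $d_-$-identity is the same computation, since the extra factor $s(c_i)\bigl(w^i_{\text{over}} - w^i_{\text{under}}\bigr)$ from \eqref{eq: dminus-term} is identical in the two complexes and commutes past the sign.

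The argument is routine and I do not foresee a genuine obstacle; the only care needed is the translation between condition \eqref{eq: sprinkle} and the cocycle condition for $\delta$, together with the observation that the resulting $\Phi$ is filtered (not merely a chain map in $\CKob$). This is the same mechanism appearing in \cite[Lemma 4.5]{ClarkMorrisonWalkerFunctoriality}, \cite[Lemma 2.2]{OzsvathRasmussenSzaboOdd}, and \cite[Theorem 6.6]{ChandlerPosets}.
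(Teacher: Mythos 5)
Your proposal is correct and is essentially identical to the paper's proof: both recognize that the product $\varepsilon\varepsilon'$ (equivalently $\mu_{m'}-\mu_m$) is a $\Z/2$-cocycle on the contractible cube, hence a coboundary $\delta\phi$, and both define the isomorphism as $(-1)^{\phi(v)}$ (your $\eta(v)$) times the identity on each summand. Your explicit verification of the chain-map and filteredness conditions is a slightly more detailed write-up of the same argument.
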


\begin{proof}
Pairs $v \lessdot v'$ correspond to edges of the cube $[0,1]^n$ with its standard cell structure. A function $m$ as in the definition may thus viewed as a cellular cochain $\mu_m \in C^1([0,1]^n;\Z/2)$; the property \eqref{eq: sprinkle} is equivalent to saying that $\delta \mu_m$ evaluates to $1$ on every $2$-dimensional face. If $m'$ is another such function, then $\delta(\mu_{m'} - \mu_m) = 0$, so $\mu_{m'} = \mu_m + \delta\phi$ for some $\phi \in C^0([0,1]^n;\Z/2)$ since $H^1([0,1]^n;\Z/2)=0$. That is, for each $v \lessdot v'$, we have
\[
m'(v,v') = m(v,v') + \phi(v) - \phi(v').
\]
We then obtain an isomorphism
\[
\Phi \co (BS(L), d_+^m, d_-^m) \to (BS(L), d_+^{m'}, d_-^{m'})
\]
consisting of $(-1)^{\phi(v)}$ times the identity cobordism of each resolution $T_v$.
\end{proof}

Next, note that the function $m$ depends on the orientation of $T$, since it is based on the signs of the crossings, while Bar-Natan's $m'$ does not. However, this dependence also disappears (up to isomorphism) thanks to the preceding lemma. To be precise, let $T$ and $T'$ denote different orientations on the same underlying unoriented (weighted, shaded) tangle diagram. Let $n_{\pm}$ and $n_{\pm}'$ denote the number of $\pm$ crossings in $T$ and $T'$, respectively. On the level of chain spaces, we have
\[
BS(T) = BS(T')[-n_- + n_-']\{ n_+ - n_- - n_+' + n_-'\}
\]
where $[ \cdot ]$ represents a shift in the homological grading, and the differentials are the same up to the formula for sprinkling. By Lemma \ref{lemma: sprinkle-indep}, the two complexes are isomorphic. We leave the construction of an explicit isomorphism to the reader.

Finally, while the definition of $m$ depends on the ordering of the crossings of $T$, this dependence again disappears up to isomorphism thanks to Theorem \ref{thm: BS-local} and Remark \ref{rmk: planar-commute}.

\subsection{Recovering the original chain complex}

To come full circle, we now discuss how to recover Batson and Seed's original construction from the description above.

Let $\FF \co \Cob^3_\bullet(\emptyset) \to \Mod_R$ denote the Khovanov TQFT functor, using the Frobenius algebra $V = R[X]/(X^2)$. To be precise, $\FF$ assigns $V^{\otimes k}$ to a crossingless diagram with $k$ circles, with a grading shift if applicable. (In Bar-Natan's notation, $v_+$ corresponds to $1$ and $v_-$ to $X$.) The maps associated to elementary birth, saddle, and death cobordisms in $\R^2 \times [0,1]$ are as described in \cite[Section 2.2]{Khov}; the map associated to a dotted cylinder is multiplication by $X$. The functor respects the relations in Figure \ref{fig: local-relations}, and hence descends to a functor $\Cob^3_{\bullet/l}(\emptyset) \to \Mod_R$. (This is spelled out for the undotted relations in Figure \ref{fig: local-nodots} in \cite[Proposition 7.2]{BarNatanTangles}; the proof for the dotted relations proceeds similarly.)

If $L$ is a weighted link diagram, applying $\FF$ to $BS(L,w)$ gives a curved chain complex in $\CKom(\Mod_R)$, which agrees precisely with the original Batson--Seed complex, as described in Section \ref{ssec: BSlink} and Example \ref{ex: BS-Ckom-RMod}.

\section{Reidemeister invariance maps for the Batson--Seed complex} \label{sec: reidemeister}

In this section, we prove the invariance of the $BS$ complex up to filtered homotopy equivalence, generalizing Batson and Seed's proof of invariance \cite[Proposition 4.2]{BatsonSeed}. More precisely, just as in \cite[Section 4.3]{BarNatanTangles} (and somewhat different from \cite{BatsonSeed}), we will explicitly construct a filtered homotopy equivalence for each Reidemeister move. These maps will then be used in the construction of cobordism maps in Section \ref{sec: cobordism-maps}.

The precise statement is as follows:

\begin{theorem} \label{thm: BS-invariant}
Suppose $T$ and $T'$ are oriented, weighted, shaded tangle diagrams that are related by a sequence of Reidemeister moves (where the weightings and shadings agree). Then there is a filtered, degree-$0$, undotted homotopy equivalence $f \co BS(T) \to BS(T')$ whose associated graded map $f^{(0)}$ agrees with the homotopy equivalence on Khovanov complexes defined by Bar-Natan.
\end{theorem}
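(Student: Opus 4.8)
The strategy is to handle each of the Reidemeister moves R1, R2, R3 in turn, in each case localizing to a small disk $\Delta$ containing the portion of the diagram that changes, using Theorem~\ref{thm: BS-extend} to reduce to the case where $T$ and $T'$ are the two small tangles in $\Delta$. Inside $\Delta$ we must exhibit an explicit homotopy equivalence $f \co BS(T_1') \to BS(T_2')$ of curved complexes in $\CKob^f$, together with the homotopy inverse $g$ and the two homotopies $gf \simeq \id$, $fg \simeq \id$. For the $d_+$ part of each complex, Bar-Natan has already written down the relevant maps and homotopies in \cite[Section 4.3]{BarNatanTangles} using the ``delooping'' and ``Gaussian elimination'' techniques; our job is to verify that (i) these maps, reinterpreted as morphisms in $\CKom$ (i.e.\ allowed to shift homological grading by even amounts), also commute with the \emph{total} differential $d = d_+ + d_-$, not just with $d_+$, and (ii) they can be taken to be filtered of degree $0$. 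Since the $d_-$ differential is built from the reverse saddle cobordisms weighted by $s(c)(w_{\text{over}} - w_{\text{under}})$, and since the weightings and shadings agree across the move, the extra $d_-$ terms are ``symmetric'' to the $d_+$ terms, and the same formal cancellations that make Bar-Natan's maps chain maps for $d_+$ should work for $d_-$ --- essentially because $d_-$ on the relevant one- and two-crossing tangles is just a rescaled mirror of $d_+$.

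Concretely, for R1 one uses the delooping isomorphism to split off the extra circle, producing a complex isomorphic (in $\Mat(\Cob^3)$) to a direct sum, then applies Gaussian elimination to cancel an acyclic summand. The key point is to check that the off-diagonal ``correction'' term introduced by Gaussian elimination, which in Bar-Natan's setting involves the composite of a component of $d_+$, the inverse of the cancelled differential, and another component of $d_+$, is still well-defined and still produces a chain map when the cancelled arrow is a component of the total differential $d$ and the surrounding arrows include $d_-$ contributions. For R2 one does two successive Gaussian eliminations (or one delooping followed by eliminations); for R3 one combines these with the observation that both sides become isomorphic to a common complex after delooping/cancellation, as in \cite[Section 4.3]{BarNatanTangles}. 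In each case the resulting maps are built from births, deaths, saddles, and identity cobordisms with integer coefficients, hence undotted, and a degree count (using $\deg(\text{saddle}) = -1$, $\deg(\text{birth}) = \deg(\text{death}) = +1$, and the grading conventions of Definition~\ref{def: BS-chain} and Remark~\ref{rmk: grading}) shows they are homogeneous of degree $0$; since a degree-$0$ map whose homological-grading shifts are all $\le 0$ is automatically filtered in the sense of Definition~\ref{def: filtered}, filteredness follows once we check no summand strictly increases homological grading, which is visibly true for delooping and Gaussian-elimination maps.

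Finally, to identify $f^{(0)}$ with Bar-Natan's Khovanov homotopy equivalence: by the last paragraph before Lemma~\ref{lemma: filtered}, since $f$ is homogeneous of degree $0$ and $0$-filtered, $f^{(0)}$ is exactly the part of $f$ preserving homological grading, and by Lemma~\ref{lemma: filtered}(1) it is a chain map $(\Omega, d_+) \to (\Omega', d_+')$. But $(\BS(T), d_+)$ is, by construction (Section~\ref{ssec: dplus}), Bar-Natan's Khovanov complex up to the harmless sign reparametrization discussed in Section~\ref{ssec: signs} and Lemma~\ref{lemma: sprinkle-indep}, and the homological-grading-preserving part of each of our localized maps is precisely the corresponding delooping/Gaussian-elimination map Bar-Natan uses; so $f^{(0)}$ agrees with Bar-Natan's map on the nose (or up to the explicit sign isomorphism of Lemma~\ref{lemma: sprinkle-indep}).

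**Main obstacle.** I expect the genuinely non-routine step to be verifying that the Gaussian elimination procedure is compatible with the curved/total differential rather than just with $d_+$ --- that is, that after cancelling an acyclic piece, the ``corrected'' differential on the smaller complex still has the form $d_+ + d_-$ of an object of $\CKom$ with $d_+^2 = d_-^2 = 0$ (even though $d^2$ need not vanish), and that the homotopy equivalence intertwines the total differentials. One clean way to organize this is to prove a general ``Gaussian elimination in $\CKom$'' lemma: if an object of $\CKom(\CC)$ decomposes so that some component of the total differential $d$ between two summands is an isomorphism, then one may cancel those summands and obtain a homotopy-equivalent object of $\CKom(\CC)$, with the standard correction formula, and the equivalence is filtered of degree $0$ when the original data is. This lemma is a formal consequence of interpreting everything as matrices (as in Remark~\ref{rmk: commonfacts}) and mimicking the classical proof; once it is in hand, each Reidemeister move reduces to a short explicit computation of the relevant $2\times 2$ or $3\times 3$ block, exactly as in \cite[Section 4.3]{BarNatanTangles}, and the $d_-$ contributions come along for free because they sit in the matrix entries just like the $d_+$ contributions.
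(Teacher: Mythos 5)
Your overall framework (localize via Theorem \ref{thm: BS-extend}, construct the map move-by-move, read off $f^{(0)}$ via Lemma \ref{lemma: filtered}) matches the paper's, but the core of your argument rests on two claims that do not hold as stated. First, the heuristic that ``the same formal cancellations that make Bar-Natan's maps chain maps for $d_+$ should work for $d_-$'' fails precisely for the Reidemeister 3 move: the paper checks that Bar-Natan's R3 map $f_0$ satisfies $f_0 d_- \ne d_- f_0$ (for instance on the component from the $111$ resolution of $T$ to the $011$ resolution of $T'$), and one must add a correction term $f_1$ that \emph{drops} homological grading by $2$ and satisfies $f_1 d_+ - d_+ f_1 = d_- f_0 - f_0 d_-$. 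This is exactly the phenomenon that forces morphisms in $\CKom$ to carry nontrivial even homological shifts (Remark \ref{rmk: lessimp}); it does not come along for free. (Your reasoning does go through for R1, where $d_-$ vanishes on both sides, and for R2, where Bar-Natan's maps happen to commute with $d_-$ as well --- that is essentially what the paper does in those two cases.)

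Second, the proposed ``Gaussian elimination in $\CKom$'' lemma is the load-bearing step of your R3 argument, and as stated it is problematic. If you cancel a forward isomorphism $\phi \co b_1 \to b_2$ with $b_1 \subset \Omega^r$ and $b_2 \subset \Omega^{r+1}$, the corrected differential acquires terms $\beta\,\phi^{-1}\gamma$ in which $\gamma$ may be a $d_-$ component out of $\Omega^{r+2}$ and $\beta$ a $d_-$ component into $\Omega^{r-1}$, yielding a term of homological degree $-3$; such a term cannot be absorbed into an object of $\CKom(\CC)$, whose differential must split as $d_+ + d_-$ of degrees $\pm 1$ with $d_+^2 = d_-^2 = 0$. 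Even when these terms happen to vanish, the standard proof that the reduced differential behaves correctly uses $d^2 = 0$, which fails here: by Proposition \ref{prop: BS-curv} the local tangles appearing in R2 and R3 have nonzero curvature, so the lemma needs a separate argument controlling how the curvature interacts with $\phi^{-1}$. The paper avoids all of this by writing the maps down explicitly ($f = f_0 + f_1$ for R3), obtaining the homotopy inverse from the $\pi$-rotation symmetry of the move, and exhibiting an explicit homotopy; $BS$-simplicity of trivial tangles (Lemma \ref{lemma: trivial-tangle}) is then used elsewhere for uniqueness. If you pursue the elimination route you must prove the curved elimination lemma carefully and verify case by case that the bad correction terms vanish; moreover you would only conclude that $f^{(0)}$ is \emph{homotopic} to Bar-Natan's map rather than equal to it, which still suffices for the paper's later applications but is weaker than the statement as written.
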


\begin{proof}
Thanks to Theorem \ref{thm: BS-extend}, this reduces to a local construction for each individual Reidemeister move, which is then extended by the identity for an arbitrary tangle.

For each of the three Reidemeister moves, let $T$ and $T'$ denote the ``before'' and ``after'' tangles,  equipped with compatible orientations, weightings, and shadings. The weights will be assumed to be arbitrary elements of an arbitrary commutative ring $R$. By Remark \ref{rmk: change-shading}, since the weights are arbitrary, it suffices to consider only one possible checkerboard shading in each case. Moreover, we will write down the homotopy equivalences using a particular choice of orientation and ordering of crossings for each tangle; for an arbitrary choice of orientation and ordering of crossings, we obtain the map by pre- and post-composing with the isomorphisms discussed in Section \ref{ssec: signs}. We refer to \eqref{eq: key} for the symbol convention.

We consider each of the Reidemeister moves separately.

\begin{figure}
\[
\xymatrix @R=1in@C=1in{
BS\left(
\tikzbox{\begin{tikzpicture}[scale=0.5]
\filldraw[black!20!white] (135:1) .. controls (-.25,0) and (0,.25) .. (.5,.25) arc (90:-90:.25) .. controls (0,-.25) and (-.25,0) .. (225:1) arc (225:135:1);
\draw[] (0,0) circle (1);
\draw[thick] (135:1) .. controls (-.25,0) and (0,.25) .. (.5,.25) arc (90:-90:.25) .. controls (0,-.25) and (-.25,0) .. (225:1);
\end{tikzpicture}}\right)
\ar@<2pt>[d]^{f} &
\tikzbox{\begin{tikzpicture}[scale=0.5]
\draw[] (0,0) circle (1);
\draw[thick] (135:1) .. controls (-.25,0) and (0,.25) .. (.5,.25) arc (90:-90:.25) .. controls (0,-.25) and (-.25,0) .. (225:1);
\end{tikzpicture}}
\ar[r]^0
\ar@<2pt>[d]^{f=\vcenter{\hbox{\includegraphics[width=8mm]{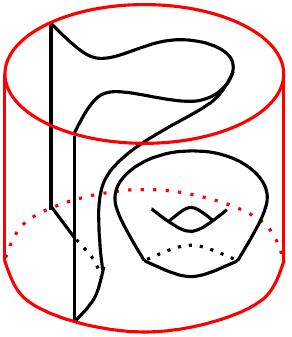}}}-\vcenter{\hbox{\includegraphics[width=8mm]{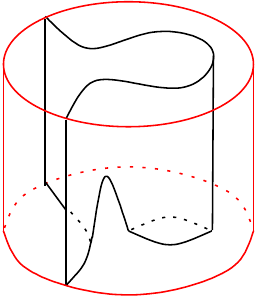}}}}
& 0 \ar@<2pt>[d]^{0} \\
BS\left(
\tikzbox{\begin{tikzpicture}[scale=0.5]
\filldraw[black!20!white] (135:1) .. controls (-.25,0) and (0,-.25) .. (.5,-.25) arc (-90:90:.25) .. controls (0,.25) and (-.25,0) .. (225:1) arc (225:135:1);
\draw[] (0,0) circle (1);
\draw[thick] (135:1) .. controls (-.25,0) and (0,-.25) .. (.5,-.25) arc (-90:0:.25);
\node[crossing] at (-.15,0) {};
\draw[thick] (.75,0) arc (0:90:.25) .. controls (0,.25) and (-.25,0) .. (225:1);
\end{tikzpicture}}\right)
\ar@<2pt>[u]^{g} &
\tikzbox{
\begin{tikzpicture}[scale=0.5]
\draw[] (0,0) circle (1);
\draw[thick] (135:1) .. controls (0,0) .. (225:1);
\draw[thick] (.5,0) circle (.25);
\end{tikzpicture}}
\ar@<2pt>[u]^{g=\vcenter{\hbox{\includegraphics[width=8mm]{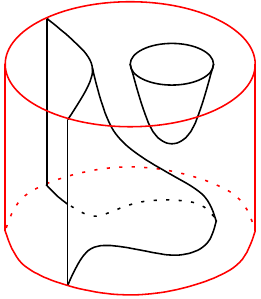}}}} \ar@<2pt>[r]^{d_+=\vcenter{\hbox{\includegraphics[width=8mm]{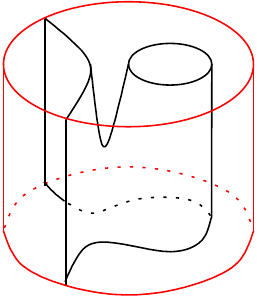}}}} &
\tikzbox{
\begin{tikzpicture}[scale=0.5]
\draw[] (0,0) circle (1);
\draw[thick] (135:1) .. controls (-.25,0) and (0,.25) .. (.5,.25) arc (90:-90:.25) .. controls (0,-.25) and (-.25,0) .. (225:1);
\end{tikzpicture}}
\ar@<2pt>[u]^0
\ar@<2pt>[l]^{d_- = 0}
\ar@{-->}@<4pt>@/^1pc/[l]^{h=\vcenter{\hbox{\includegraphics[width=8mm]{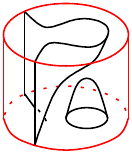}}}}
}
\]
\caption{Reidemeister 1 invariance maps, adapted from \cite[Figure 5]{BarNatanTangles}.} \label{fig: R1}
\end{figure}

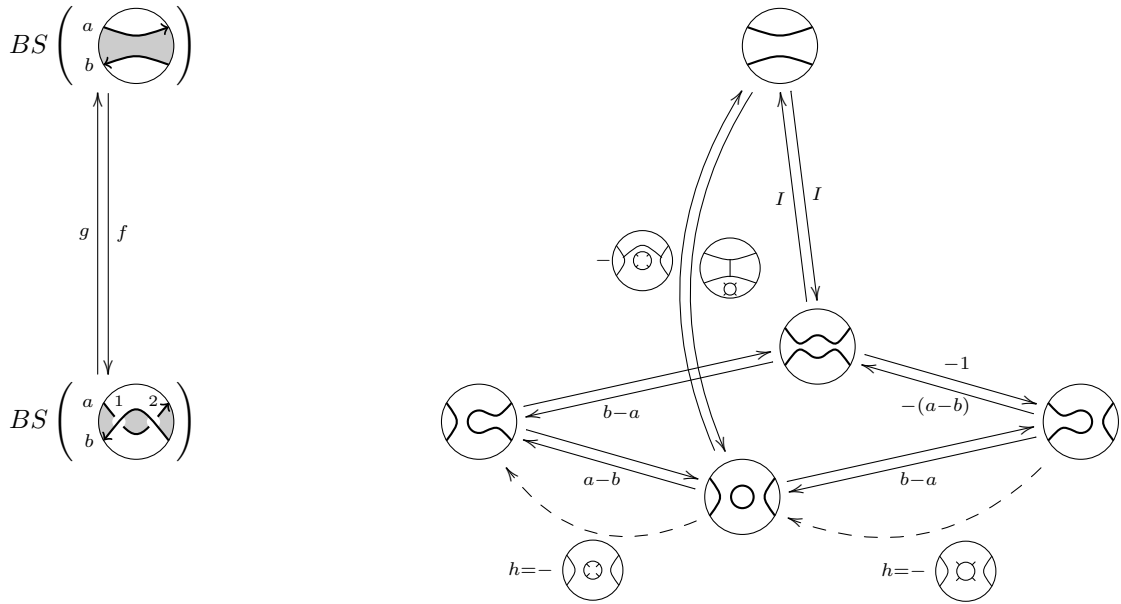
\begin{figure}[htp!]
\[
\xy
(-50,50)*+{BS\left(\tikzbox{\begin{tikzpicture}[scale=0.5]
\filldraw[black!20!white] (30:1) .. controls (0,0.2) .. (150:1) arc (150:210:1) .. controls (0,-0.2) .. (-30:1) arc (-30:30:1);
\draw[] (0,0) circle (1);
\draw[thick, ->] (150:1) .. controls (0,0.2) .. (30:1) node [at start, left] {\scriptsize $a$};
\draw[thick, ->] (-30:1) .. controls (0,-0.2) .. (210:1) node [at end, left] {\scriptsize $b$};
\end{tikzpicture}}\right)}="BST";
(40,50)*+{
\tikzbox{\begin{tikzpicture}[scale=0.5]
\draw[] (0,0) circle (1);
\draw[thick] (30:1) .. controls (0,0.2) .. (150:1);
\draw[thick] (-30:1) .. controls (0,-0.2) .. (210:1);
\end{tikzpicture}}} = "Tres";
(-50,0)*+{BS\left(\tikzbox{\begin{tikzpicture}[scale=0.5]
\filldraw[black!20!white] (30:1) .. controls (0,-.6) .. (150:1) arc (150:210:1) .. controls (0,.6) .. (-30:1) arc (-30:30:1);
\draw[] (0,0) circle (1);
\draw[thick, ->] (150:1) .. controls (0,-.6) .. (30:1) node [at start, left] {\scriptsize $a$};
\node[crossing, label = above:{\tiny $1$}] at (-.46,0) {};
\node[crossing, label = above:{\tiny $2$}] at (.46,0) {};
\draw[thick, ->] (-30:1) .. controls (0,.6) .. (210:1) node [at end, left] {\scriptsize $b$};
\end{tikzpicture}}\right)}="BST'";
(0,0)*+{\begin{tikzpicture}[scale=0.5]
\draw[] (0,0) circle (1);
\draw[thick] (30:1) .. controls (.5,.1) .. (60:.3) arc (60:300:.3) .. controls  (.5,-.1) .. (-30:1);
\draw[thick] (150:1) .. controls (-.5,0) .. (210:1);
\end{tikzpicture}}="T'00";
(45,10)*+{
\begin{tikzpicture}[scale=0.5]
\draw[] (0,0) circle (1);
\draw[thick] (30:1) .. controls (.5,0) .. (45:.3) arc (45:135:.3) .. controls (-.5,0) .. (150:1);
\draw[thick] (-30:1) .. controls (.5,0) .. (-45:.3) arc (-45:-135:.3) .. controls (-.5,0) .. (210:1);
\end{tikzpicture}}="T'10";
(35,-10)*+{\begin{tikzpicture}[scale=0.5]
\draw[] (0,0) circle (1);
\draw[thick] (30:1) ..  controls  (.5,0) .. (-30:1);
\draw[thick] (150:1) .. controls (-.5,0) .. (210:1);
\draw[thick] (0,0) circle (0.3);
\end{tikzpicture}}="T'01";
(80,0)*+{\begin{tikzpicture}[scale=0.5]
\draw[] (0,0) circle (1);
\draw[thick] (150:1) .. controls (-.5,.1) .. (120:.3) arc (120:-120 :.3) .. controls  (-.5,-.1) .. (210:1);
\draw[thick] (30:1) .. controls (.5,0) .. (-30:1);
\end{tikzpicture}}="T'11";
{\ar@<2pt>^{f} "BST";"BST'"};
{\ar@<2pt>^{g} "BST'";"BST"};
{\ar@<2pt> "T'00";"T'01"};
{\ar@<2pt> "T'00";"T'10"};
{\ar@<2pt> "T'01";"T'11"};
{\ar@<2pt>^{-1} "T'10";"T'11"};
{\ar@<2pt>^{b-a} "T'11";"T'01"};
{\ar@<2pt>^{-(a-b)} "T'11";"T'10"};
{\ar@<2pt>^{a-b} "T'01";"T'00"};
{\ar@<2pt>^(0.6){b-a} "T'10";"T'00"};
{\ar@<2pt>^{I} "Tres";"T'10"};
{\ar@<2pt>^{I} "T'10";"Tres"};
{\ar@<2pt>@/_1cm/^{\begin{tikzpicture}[scale=0.4]
\draw[] (0,0) circle (1);
\draw[] (30:1) .. controls (0,0.2) .. (150:1);
\draw[] (-30:1) .. controls (0,-0.2) .. (210:1);
\draw[] (0,-.3) -- (0,.3);
\begin{scope}[rotate around={45:(0,-.7)}]
\draw[] (0,-.7) circle (0.2);
\draw[] (0,-.5) -- (0,-.4);
\draw[] (0,-.9) -- (0,-1);
\draw[] (.2,-.7) -- (.3,-.7);
\draw[] (-.2,-.7) -- (-.3,-.7);
\end{scope}
\end{tikzpicture}} "Tres";"T'01"};
{\ar@<2pt>@/^1cm/^{-\vcenter{\hbox{\begin{tikzpicture}[scale=0.4]
\draw[] (0,0) circle (1);
\draw[] (30:1) ..  controls  (.5,0) .. (-30:1);
\draw[] (150:1) .. controls (-.5,0) .. (210:1);
\begin{scope}[rotate around={45:(0,0)}]
\draw[] (0,0) circle (0.3);
\draw[] (0,0.3) -- (0,0.2);
\draw[] (0,-0.3) -- (0,-0.2);
\draw[] (0.3,0) -- (0.2,0);
\draw[] (-0.3,0) -- (-0.2,0);
\end{scope}
\draw[] (-.65,.1) .. controls (0,.65) .. (.65,.1);
\end{tikzpicture}}}} "T'01";"Tres"};
{\ar@/^1cm/@{-->}^{h=-\tikzbox{
\begin{tikzpicture}[scale=0.4]
\draw[] (0,0) circle (1);
\draw[] (30:1) ..  controls  (.5,0) .. (-30:1);
\draw[] (150:1) .. controls (-.5,0) .. (210:1);
\draw[] (0,0) circle (0.3);
\draw[] (45:0.3)--(45:0.45);
\draw[] (135:0.3)--(135:0.45);
\draw[] (225:0.3)--(225:0.45);
\draw[] (315:0.3)--(315:0.45);
\end{tikzpicture}
}} "T'11";"T'01"};
{\ar@/^1cm/@{-->} ^{h=-\tikzbox{
\begin{tikzpicture}[scale=0.4]
\draw[] (0,0) circle (1);
\draw[] (30:1) ..  controls  (.5,0) .. (-30:1);
\draw[] (150:1) .. controls (-.5,0) .. (210:1);
\draw[] (0,0) circle (0.3);
\draw[] (45:0.3)--(45:0.15);
\draw[] (135:0.3)--(135:0.15);
\draw[] (225:0.3)--(225:0.15);
\draw[] (315:0.3)--(315:0.15);
\end{tikzpicture}
}} "T'01";"T'00"};
\endxy
\]
    \caption{Reidemeister 2 invariance maps, adapted from \cite[Figure 6]{BarNatanTangles}. The weights of the two strands are given by $a$ and $b$, and the small numbers inside the lower tangle indicate the order of the crossings. Each arrow in $d_+$ and $d_-$ indicates a multiple of the appropriate saddle cobordism, with the coefficient indicated on the arrow.}
    \label{fig: R2}
\end{figure}

\subsubsection*{Reidemeister 1}

In the case of a Reidemeister 1 move, let us take $T$ to be the $0$-crossing tangle and $T'$ to be the $1$-crossing tangle. Since the sole crossing in $T'$ occurs between two strands of the same tangle component, the $d_-$ differential on $BS(T')$ vanishes, as do both the $d_+$ and $d_-$ differentials on $BS(T)$. Therefore, the chain maps $f \co BS(T) \to BS(T')$ and $g \co BS(T') \to BS(T)$ defined by Bar-Natan \cite{BarNatanTangles}, which are shown in Figure \ref{fig: R1}, are also chain maps in the sense of $\CKob$. Moreover, $g \circ f = \id$, while $f \circ g$ is homotopic to the identity using the same homotopy $h$ as in \cite{BarNatanTangles}, which is also shown in Figure \ref{fig: R1}.

\subsubsection*{Reidemeister 2}

Similarly, in the case of a Reidemeister 2 move, let us take $T$ to be the $0$-crossing tangle and $T'$ to be the $2$-crossing tangle. In this case, the $d_-$ differential on $BS(T')$ may be nontrivial. Let $f \co BS(T) \to BS(T')$ and $g \co BS(T') \to BS(T)$ be the maps defined by Bar-Natan \cite{BarNatanTangles}, which are shown in Figure \ref{fig: R2}. Bar-Natan verifies that these are chain maps in the sense of $\Kob$, which gives $d_+ f = 0 = f d_+$ and $g d_+ = 0 = d_+ g$, and an almost identical argument (using the fact that the difference of weights is the same at both crossings of $T'$) shows that $d_- f = 0 = f d_- $ and $g d_- = 0 = d_- g$. Thus, $f$ and $g$ are also chain maps in the $\CKob$ sense. Moreover, $g f = \id$ and $f g - \id = d_+ h + h d_+$, where $h \co BS(T') \to BS(T')$ is a homotopy shown in \cite[Figure 6]{BarNatanTangles}. It is easy to check that $d_- h + h d_- = 0$, which then shows that $h$ is also a homotopy in the sense of $\CKob$.

\subsubsection*{Reidemeister 3}

The case of the Reidemeister 3 move is the most interesting. Let $T$ and $T'$ denote the ``before'' and ``after'' tangles, equipped with weights, shadings, orientations, and order of crossings as in Figure \ref{fig: R3}. Because the first two crossings (as ordered) are positive and the third is negative, our formula for sprinkling signs agrees with Bar-Natan's (see Section \ref{ssec: signs}). Let $f_0 \co BS(T) \to BS(T')$ be the Reidemeister 3 map constructed by Bar-Natan, which is represented by the blue arrows in Figure \ref{fig: R3}. (For simplicity, we have suppressed the labels of most of the arrows in the $d_{\pm}$ differentials on $BS(T)$ and $BS(T')$.) Bar-Natan showed that $f_0$ is a chain map (in the sense of $\Kob$) with respect to the ordinary Khovanov differential $d_+$; that is, $f_0 d_+ = d_+ f_0$. However, observe that $f_0 d_- \ne d_- f_0$. For instance, if we consider only the components going from the $111$ resolution of $T$ to the $011$ resolution of $T'$, we have $f_0 d_-=0$ but $d_- f_0 = b-c$ times a saddle cobordism.

To resolve this issue, let $f_1$ consist of the red arrows in Figure \ref{fig: R3}. It is easy to verify that $f_1$ commutes with $d_-$ and that the failure of $f_1$ to commute with $d_+$ exactly cancels the failure of $f_0$ to commute with $d_-$:
\[
f_1 d_+ - d_+ f_1 = d_- f_0 - f_0 d_-.
\]
For instance, going from the $111$ resolution of $T$ to the $011$ resolution of $T'$, we have $f_1 d_+ = 0$ and $d_+ f_1 = -(b-c)$ times a saddle cobordism, which exactly cancels the contribution from $d_- f_0$ given above. Verifying the remaining components is left to the reader as an exercise; some components involve using the local relations from Figure \ref{fig: local-nodots}. Defining $f = f_0 + f_1$, we thus have $df = fd$, so $f$ is a chain map in $\CKob$. Note that $f$ is homogeneous of degree $0$ with respect to the Batson--Seed grading, and it is $0$-filtered (in the sense of Definition \ref{def: filtered}) with associated graded map equal to $f_0$.

To see why $f$ is a homotopy equivalence, we first note that up to checkerboard shading and orientations, the source and target tangles of the third Reidemeister move are rotations of one another by $\pi$. Thus, up to changing the signs of some arrows, we can obtain the chain map $g$ for the reverse Reidemeister 3 move by simply rotating each resolution in Figure \ref{fig: R3} by $\pi$ and taking all the same arrows (with the slight change that the weights $a,b,c$ are negated). The composition $g \circ f$ is then given by the blue and red arrows in Figure \ref{fig: R3-homotopy}. (Note that several potential terms in this composition terms vanish due to the sphere relation.) The green dashed arrows in the figure then provide a chain homotopy between $g \circ f$ and the identity, as the reader can verify. An almost identical argument applies to $f \circ g$, which concludes the proof.
\end{proof}

\begin{figure}
\labellist
\small
\pinlabel {{\color{blue} $I$}}  [tr] at 209 159
\pinlabel {{\color{blue} $I$}}  [bl] at 332 173
\pinlabel {{\color{blue} $I$}}  [bl] at 404 182
\pinlabel {{\color{blue} $I$}}  [bl] at 449 182
\pinlabel {{\color{blue} $-I$}}  [bl] at 546 182
\pinlabel {{\color{red} $(b-c)$}}  [r] at 510 169
\pinlabel {{\color{red} $(b-c)I$}}  [br] at 206 106
\pinlabel $a$ [r] at 18 249
\pinlabel $b$ [br] at 24 265
\pinlabel $c$ [bl] at 62 265
\pinlabel $a$ [r] at 18 58
\pinlabel $b$ [br] at 24 77
\pinlabel $c$ [bl] at 62 77
\scriptsize
\pinlabel $b-c$ [tr] at 172 226
\pinlabel $c-a$ [b] at 193 247
\pinlabel $a-b$ [br] at 210 270
\pinlabel $-$ [b] at 296 204
\pinlabel $-$ [b] at 311 260
\pinlabel $-$ [b] at 341 234
\pinlabel $-$ [b] at 465 246
\pinlabel $-$ [b] at 357 14
\pinlabel $-$ [b] at 401 42
\pinlabel $-$ [t] at 371 72
\pinlabel $-(a-c)$ [b] at 529 56
\pinlabel $c-b$ [bl] at 534 83
\pinlabel $b-a$ [tl] at 514 37
\tiny
\pinlabel $1$ at 48 245
\pinlabel $2$ at 54 251
\pinlabel $3$ at 32 251
\pinlabel $1$ at 43 67
\pinlabel $2$ at 37 50
\pinlabel $3$ at 48 50
\pinlabel $000$ [b] at 118 264
\pinlabel $100$ [b] at 209 222
\pinlabel $010$ [b] at 241 264
\pinlabel $001$ [b] at 275 306
\pinlabel $110$ [b] at 366 222
\pinlabel $101$ [b] at 399 264
\pinlabel $011$ [b] at 433 306
\pinlabel $111$ [b] at 524 264
\endlabellist
\includegraphics[width=6in]{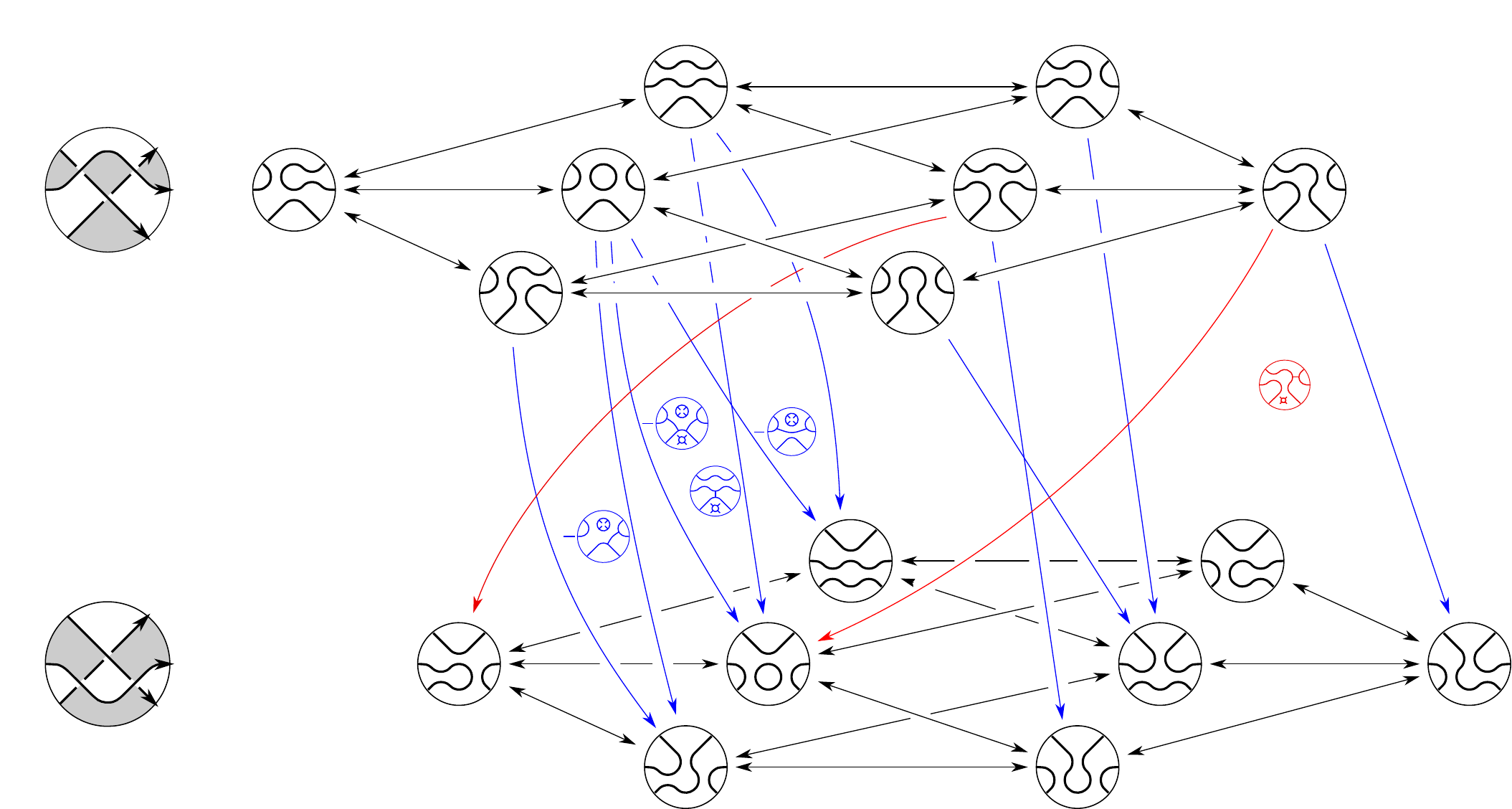}
\caption{The Reidemeister 3 invariance map, adapted from \cite[Figure 9]{BarNatanTangles}. The weights of the three tangle components are $a$, $b$, and $c$ as shown, and the numerals in the tangle diagrams indicate the order of the crossings. The resolutions of $T$ are decorated with their corresponding binary sequences. For conciseness, we have denoted the terms in $d_+$ and $d_-$ with double-headed arrows (cf.~Figures \ref{fig: R1} and \ref{fig: R2}), and we have omitted the factors $s(c_i) (w_{\text{over}} - w_{\text{under}})$ for most of the $d_-$ terms apart from the ones in the upper left and lower right of the figure. The $-$ signs on certain arrows denote the signs $(-1)^{m(v,v')}$ that appear in both $d_+$ and $d_-$ differentials. (Note that the horizontal arrow in the lower-right corner includes both this $-$ sign and the factor of $s(c_i) (w_{\text{over}} - w_{\text{under}}) = a-c$.)}
\label{fig: R3}
\end{figure}

\begin{figure}
\labellist
\small
\pinlabel $a$ [r] at 18 249
\pinlabel $b$ [br] at 24 265
\pinlabel $c$ [bl] at 62 265
\pinlabel $a$ [r] at 18 58
\pinlabel $b$ [br] at 24 77
\pinlabel $c$ [bl] at 62 77
\scriptsize
\pinlabel {{\color{blue} $I$}}  [l] at 435 172
\pinlabel {{\color{blue} $I$}}  [tr] at 388 159
\pinlabel {{\color{blue} $I$}}  [tr] at 418 204
\pinlabel {{\color{blue} $I$}}  [bl] at 564 138
\pinlabel {{\color{blue} $I$}}  [r] at 212 136
\pinlabel {{\color{blue} $I$}}  [l] at 348 145
\pinlabel {{\color{red} $(c-b)I$}}  [br] at 217 93
\pinlabel {{\color{red} $(b-c)$}}  [l] at 482 169
\pinlabel $b-c$ [tr] at 172 226
\pinlabel $c-a$ [b] at 193 247
\pinlabel $a-b$ [br] at 210 270
\pinlabel $-$ [b] at 296 204
\pinlabel $-$ [b] at 311 260
\pinlabel $-$ [b] at 327 231
\pinlabel $-$ [b] at 465 246
\pinlabel $-$ [b] at 357 14
\pinlabel $-$ [b] at 401 42
\pinlabel $-$ [t] at 371 72
\pinlabel $-(c-a)$ [b] at 529 56
\pinlabel $b-c$ [bl] at 534 82
\pinlabel $a-b$ [tl] at 514 39
\tiny
\pinlabel $1$ at 48 245
\pinlabel $2$ at 54 251
\pinlabel $3$ at 32 251
\pinlabel $1$ at 48 55
\pinlabel $2$ at 54 61
\pinlabel $3$ at 32 61
\pinlabel $000$ [b] at 118 264
\pinlabel $100$ [b] at 209 222
\pinlabel $010$ [b] at 241 264
\pinlabel $001$ [b] at 275 306
\pinlabel $110$ [b] at 366 222
\pinlabel $101$ [b] at 401 264
\pinlabel $011$ [b] at 433 306
\pinlabel $111$ [b] at 524 264
\endlabellist
\includegraphics[width=6in]{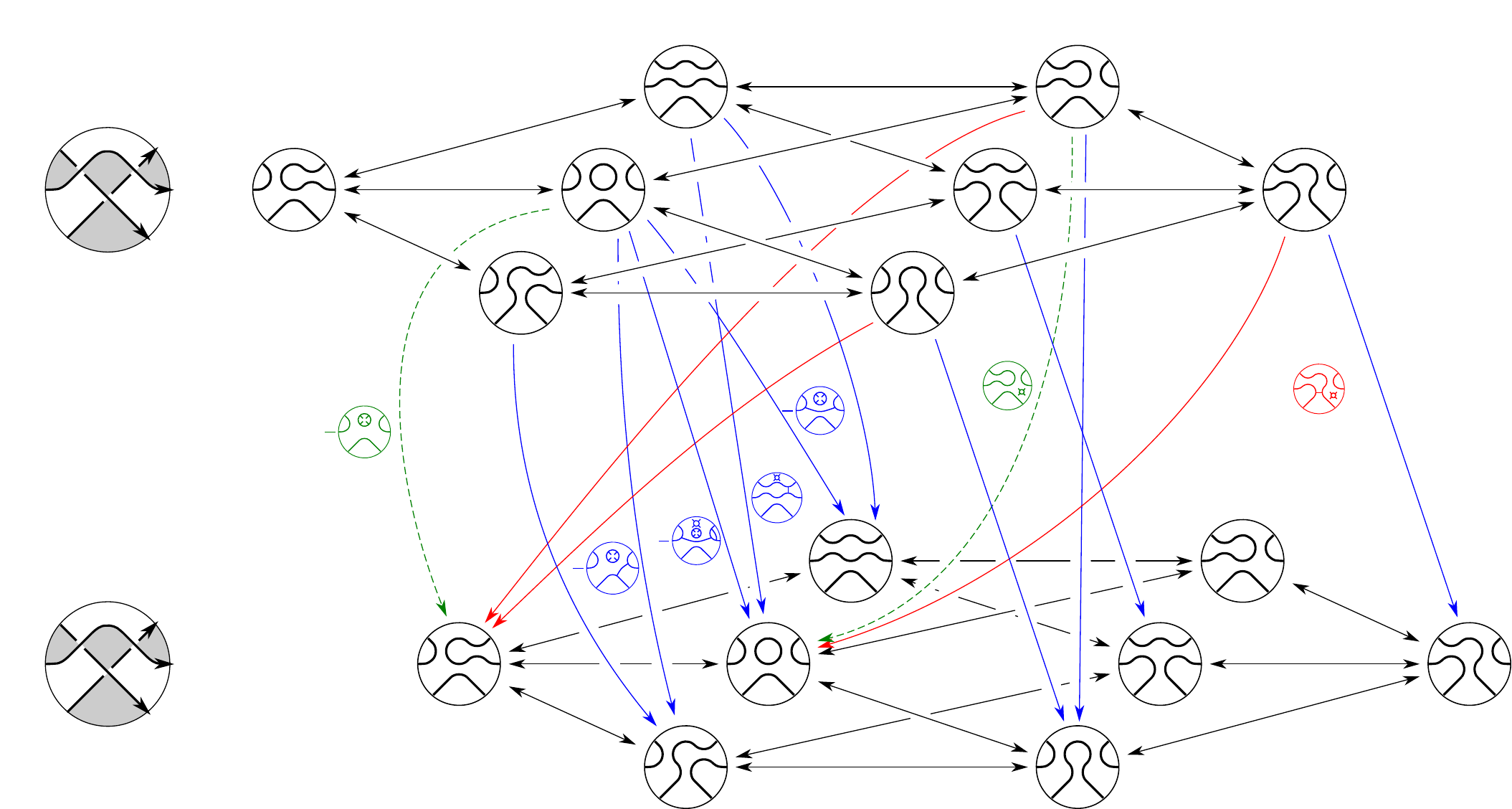}
\caption{A homotopy between the identity and the composite of two inverse Reidemeister 3 maps. The red $(c-b)I$ label in the lower left applies to both of the nearby red arrows.}
\label{fig: R3-homotopy}
\end{figure}

\begin{remark} \label{rmk: lessimp}
The structure of the Reidemeister 3 map makes clear why we needed to define chain maps in $\CKom$ to include terms with nontrivial shifts in the homological grading. Furthermore, the fact that these maps end up being homogeneous with respect to the internal grading of $\CKob$ shows why this grading (rather than the quantum grading) is the correct choice.
\end{remark}

\subsection{Simple tangles} \label{ssec: simple}

Just as in \cite{BarNatanTangles}, our argument will rely on showing that the $BS$ complexes of certain simple tangles admit no non-trivial automorphisms, as we now discuss. The proofs follow Bar-Natan's template nearly verbatim, but in the modified setting of our categories $\CKob$ and $\CKob^f$.

\begin{definition} \label{def: simple}
Let $(\Omega, d_+, d_-)$ be an object of $\CKob$, and consider $(\Omega, d_+)$ as an object in $\Kob$. For $\CC \in \{\CKob, \CKob^f, \Kob\}$, we say that $\Omega$ is \emph{$\CC$-simple} if every degree-$0$ endomorphism of $\Omega$ in $\CC$ is homotopic to a unique scalar multiple of the identity, i.e., if the map
\[
R \to \Mor_{\CC_{/h}}(\Omega, \Omega)_0, \qquad r \mapsto [r \id_\Omega]
\]
is an isomorphism.

A weighted tangle $(T,w)$ is called \emph{$BS$-simple} if $BS(T,w)$ is $\CC$-simple for all three choices of $\CC$.
\end{definition}

In particular, observe that any degree-$0$ self-homotopy equivalence of a $\CC$-simple complex must be (filtered) homotopic to a \emph{unit} scalar multiple of the identity (cf. \cite[Definition 8.5]{BarNatanTangles}).

The following lemma is an easy exercise in diagram-chasing:

\begin{lemma} \label{lemma: simple-complex}
Let $\Omega, \Omega'$ be objects in $\CKob$.
\begin{enumerate}
\item If $\Omega$ is $\CC$-simple, and $f \co \Omega \to \Omega'$ is a degree-$0$ $\CC$-homotopy equivalence (i.e., a morphism in $\CC$ that is an isomorphism in $\CC_{/h}$), then $\Omega'$ is $\CC$-simple.

\item If $\Omega$ and $\Omega'$ are (filtered) simple, and $f, g \co \Omega \to \Omega'$ are degree-$d$ $\CC$-homotopy equivalences, then $f$ and $g$ are $\CC$-homotopic up to multiplication by a unit in $R$.
\end{enumerate}
\end{lemma}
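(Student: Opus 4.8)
The plan is to prove both parts by the standard diagram-chasing arguments of homological algebra, carried out uniformly for each $\CC \in \{\CKob, \CKob^f, \Kob\}$. The one input I will rely on is the defining property of a $\CC$-simple complex $\Omega$: the assignment $r \mapsto [r\,\id_\Omega]$ is an isomorphism onto $\Mor_{\CC_{/h}}(\Omega,\Omega)_0$. Surjectivity is what lets me replace a degree-$0$ self-map by a scalar multiple of the identity up to homotopy; injectivity is what lets me conclude equality of scalars.

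For part (1), I would fix a homotopy inverse $\bar f \co \Omega' \to \Omega$ of $f$, taken homogeneous of degree $0$. Given an arbitrary degree-$0$ endomorphism $\phi$ of $\Omega'$ in $\CC$, the composite $\bar f \phi f$ is a degree-$0$ endomorphism of $\Omega$, so $\bar f \phi f \simeq r\,\id_\Omega$ for a unique $r \in R$. Transporting back,
\[
\phi \simeq (f\bar f)\,\phi\,(f\bar f) = f\,(\bar f \phi f)\,\bar f \simeq f\,(r\,\id_\Omega)\,\bar f = r\,(f\bar f) \simeq r\,\id_{\Omega'},
\]
so every degree-$0$ endomorphism of $\Omega'$ is homotopic to a scalar. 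For the injectivity half, if $r\,\id_{\Omega'} \simeq r'\,\id_{\Omega'}$, composing on both sides with $\bar f$ and $f$ yields $r\,\id_\Omega \simeq r'\,\id_\Omega$, hence $r = r'$ by simplicity of $\Omega$. This shows $r \mapsto [r\,\id_{\Omega'}]$ is an isomorphism, i.e.\ $\Omega'$ is $\CC$-simple.

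For part (2), with $f, g \co \Omega \to \Omega'$ both degree-$d$ homotopy equivalences, I would first arrange that their homotopy inverses are homogeneous of degree $-d$: given any homotopy inverse $\bar f$ of $f$, its degree-$(-d)$ part is again a two-sided homotopy inverse, since homotopies in $\CC$ split into homogeneous pieces (Remark~\ref{rmk: commonfacts}); similarly for $\bar g$. Then $\bar f g$ is a degree-$0$ endomorphism of $\Omega$, so $\bar f g \simeq r\,\id_\Omega$ for some $r$, giving $g \simeq f(\bar f g) \simeq r f$. Symmetrically $\bar g f \simeq r'\,\id_\Omega$ and $f \simeq r' g$; hence $g \simeq r f \simeq rr'\, g$, and composing on the left with $\bar g$ gives $\id_\Omega \simeq \bar g g \simeq rr'\,\id_\Omega$, so $rr' = 1$ by injectivity. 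Thus $r$ is a unit and $g \simeq r f$ in $\CC$. (Only simplicity of the source $\Omega$ enters.)

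I do not expect a genuine obstacle here — the lemma really is a routine diagram chase. The only thing to be careful about is the bookkeeping in the filtered setting $\CKob^f$: I must check that the composites and homotopies appearing above stay $0$-filtered, and that truncating a homotopy inverse to its degree-$(-d)$ part preserves the filtered condition. Both are immediate from Definition~\ref{def: filtered}, using that $f$ and $g$, being homogeneous morphisms of $\CKob^f$, are automatically $0$-filtered and that composition of a $p$-filtered with a $q$-filtered map is $(p+q)$-filtered.
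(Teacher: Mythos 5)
Your diagram chase is correct, and it is exactly the argument the paper intends — the paper gives no proof beyond calling the lemma "an easy exercise in diagram-chasing." You also rightly handle the one point that needs care (truncating homotopy inverses and homotopies to their homogeneous parts via Remark \ref{rmk: commonfacts}, and checking that everything stays $0$-filtered in $\CKob^f$), so nothing is missing.
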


\begin{definition} \label{def: pairing}
A \emph{pairing} is a tangle diagram without crossings or closed components. We say that two tangles in $B^3$ are \emph{freely isotopic} if they are isotopic, not necessarily rel boundary; that is, the endpoints are permitted to move. A tangle $T \subset B^3$ is called \emph{trivial} if it is freely isotopic to a pairing.
\end{definition}

Because the mapping class group of an $n$-punctured sphere is generated by Dehn twists, two tangle diagrams represent freely isotopic tangles iff they are related by a sequence of Reidemeister moves plus the operation of adding or removing a crossing between strands at the boundary, as in Figure \ref{fig: add-crossing}. The main result of this section is:

\begin{lemma} \label{lemma: trivial-tangle}
Any trivial tangle is $BS$-simple.
\end{lemma}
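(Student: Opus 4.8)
The strategy mirrors Bar-Natan's argument in \cite[Section 8]{BarNatanTangles} that trivial tangles give simple complexes, adapted to the curved setting. The plan is to reduce, via Theorem \ref{thm: BS-invariant} and Lemma \ref{lemma: simple-complex}(1), to the case of a \emph{pairing}, i.e.\ a crossingless diagram with no closed components; any trivial tangle is freely isotopic to a pairing, and by the discussion preceding the statement, free isotopy is realized by Reidemeister moves together with the boundary-crossing moves of Figure \ref{fig: add-crossing}. One first handles genuine Reidemeister moves directly: each gives a filtered, degree-$0$ homotopy equivalence of $BS$ complexes, so $BS$-simplicity is transported along it by Lemma \ref{lemma: simple-complex}(1). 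Adding or removing a boundary crossing is handled by the local computation in Figure \ref{fig: 1crossing} together with Theorem \ref{thm: BS-local}: a single-crossing four-end tangle has a two-term $BS$ complex, and one checks by hand (exactly as in Bar-Natan, but now tracking the $d_-$ arrow as well) that the obvious delooping/Gaussian-elimination map identifying it with the resolved pairing is a filtered homotopy equivalence in $\CKob$, $\CKob^f$, and (after forgetting $d_-$) $\Kob$. Extending by the identity via Theorem \ref{thm: BS-extend} then shows that adding a boundary crossing preserves $BS$-simplicity.

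It therefore remains to prove that the $BS$ complex of a pairing $P$ is $\CC$-simple for $\CC \in \{\CKob, \CKob^f, \Kob\}$. Since $P$ has no crossings, $BS(P)$ is concentrated in a single homological grading, with $d_+ = d_- = 0$ and hence no curvature; it is just the single object $P\{0\} \in \Mat(\Cob^3_{/l}(B))$ placed in one homological degree. A degree-$0$ endomorphism in any of the three categories is then literally a degree-$0$ cobordism from $P$ to $P$ in $\Cob^3_{/l}(B)$ (the homological-shift terms $f^{r,r+2k}$ with $k\ne 0$ and the curved/filtered bookkeeping all collapse, since there is only one homological degree), and homotopies are trivial because there is no differential. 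So the claim reduces to the purely $2$-dimensional statement that $\Mor_{\Cob^3_{/l}(B)}(P,P)_0 \cong R$, generated by the identity cobordism. This is exactly Bar-Natan's computation for his category $\Kob$ over $\Z$, and the proof goes through over an arbitrary $R$ verbatim: a degree-$0$ cobordism $P \times I \to P \times I$ is, after cutting along vertical disks, forced by the degree formula \eqref{eq: degree} and the sphere/tube relations of Figure \ref{fig: local-nodots} to reduce to a scalar multiple of the identity, and linear independence of the identity over $R$ follows from applying the TQFT functor $\FF$ of Section 2.8 (closing up $P$ to a link) and observing the resulting map on $V^{\otimes(\text{something})}$ detects the scalar.

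The main obstacle — really the only place where the curved setting could in principle cause trouble — is verifying that the single-crossing-tangle maps from Figure \ref{fig: 1crossing} are honest homotopy equivalences in $\CKom$, i.e.\ that the homotopies witnessing $fg \simeq \id$ and $gf \simeq \id$ can be chosen to commute with the full differential $d = d_+ + d_-$ and to be filtered, not just to work for $d_+$ as in \cite{BarNatanTangles}. Concretely, after delooping one resolution of the four-end single-crossing tangle, the ``Gaussian elimination'' map has a candidate homotopy built from a cap or cup cobordism; one must check that composing this homotopy with the $d_-$ arrow (a multiple $(w_a - w_b)$ of the reverse saddle, see \eqref{eq: dminus-term}) produces only terms that cancel against the corresponding $d_-$-contributions, using the neck-cutting and sphere relations of Figure \ref{fig: local-relations}—or, staying undotted, Figure \ref{fig: local-nodots}. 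I expect this to be a short diagram-chase of the same flavor as the Reidemeister~2 verification in the proof of Theorem \ref{thm: BS-invariant} (where $d_- h + h d_- = 0$ is checked), and once it is in place the rest of the argument is formal. I would organize the write-up as: (i) pairings are $BS$-simple (the $2$-dimensional computation, citing \cite[Section 8]{BarNatanTangles} for the structure and \eqref{eq: degree} plus Figure \ref{fig: local-nodots} for the details, with $\FF$ for the faithfulness); (ii) Reidemeister moves preserve $BS$-simplicity, by Theorem \ref{thm: BS-invariant} and Lemma \ref{lemma: simple-complex}(1); (iii) boundary-crossing moves preserve $BS$-simplicity, by Figures \ref{fig: 1crossing} and the local homotopy-equivalence check, extended via Theorems \ref{thm: BS-local} and \ref{thm: BS-extend}; (iv) conclude, since any trivial tangle reaches a pairing through moves of types (ii) and (iii).
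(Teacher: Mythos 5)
Your steps (i), (ii), and (iv) match the paper's proof: for a pairing $P$ the curved complex is a single object in one homological degree with no differential, so endomorphisms and homotopies collapse and the question reduces to $\Mor_{\Cob^3_{/l}(B)}(P,P)_0\cong R$, with faithfulness obtained by closing $P$ up to an unknot and applying the TQFT; and Reidemeister moves transport simplicity via Theorem \ref{thm: BS-invariant} and Lemma \ref{lemma: simple-complex}(1). The gap is in step (iii). You propose that the $BS$ complex of the single-crossing four-end tangle is filtered homotopy equivalent, by ``delooping/Gaussian elimination,'' to the $BS$ complex of a crossingless pairing, and then to extend by the identity to get $BS(TX)\simeq BS(T)$. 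No such local equivalence exists. Neither resolution of a four-end single-crossing tangle contains a closed circle, so there is nothing to deloop, and the saddle component of the differential is not an isomorphism in $\Cob^3_{/l}$ (it has degree $-1$), so Gaussian elimination does not apply. The claim is already false in $\Kob$: if the two-term complex of a single crossing were homotopy equivalent to one of its resolutions, then choosing a closure under which that resolution becomes a two-component unlink while the crossing closes to an unknot diagram would equate homologies of ranks $4$ and $2$.

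The mechanism the paper uses for this step, following \cite[Lemmas 8.7 and 8.8]{BarNatanTangles}, is not an equivalence $BS(TX)\simeq BS(T)$ but a transfer-of-simplicity argument: $TXX^{-1}$ differs from $T$ by a Reidemeister 2 move, so $BS(TXX^{-1})$ is simple whenever $BS(T)$ is; given a degree-$0$ endomorphism of $BS(TX)$, one applies the planar-algebra functor ``compose with $BS(X^{-1})$'' to land in the simple complex $BS(TXX^{-1})$, and then uses the structure of the R2 homotopy equivalence to pull the resulting scalar back to the original endomorphism. Your instinct that the only new work in the curved setting is checking compatibility with $d_-$ is correct for steps (i) and (ii), but step (iii) requires adapting Bar-Natan's actual argument to $\CKob$, not the delooping you describe.
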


\begin{figure}
\begin{subfigure}[c]{0.2\textwidth}
\begin{tikzpicture}
\draw[] (0,0) circle (1);
\draw[thick] (30:.5) -- (30:1);
\draw[thick] (-30:.5) -- (-30:1);
\draw[thick] (90:.5) -- (90:1);
\draw[thick] (-90:.5) -- (-90:1);
\draw[thick] (150:.5) -- (150:1);
\draw[thick] (-150:.5) -- (-150:1);
\node [] at (0,0) {$T$};
\end{tikzpicture}
\end{subfigure}
\begin{subfigure}[c]{0.3\textwidth}
\begin{tikzpicture}
\draw[] (0,0) circle (1);
\draw[thick] (30:.5) -- (30:1)  arc (120:45:.5) -- +(1,-1);
\node[crossing] at (1.89,0) {};
\draw[thick] (-30:.5) -- (-30:1) arc (240:315:.5) -- +(1,1);
\draw[thick] (90:.5) -- (90:1) -- (90:1.4);
\draw[thick] (-90:.5) -- (-90:1) -- (-90:1.4);
\draw[thick] (150:.5) -- (150:1) -- (150:1.2);
\draw[thick] (-150:.5) -- (-150:1) -- (-150:1.2);
\node [] at (0,0) {$T$};
\draw[] (0.7,0) ellipse [x radius = 1.9, y radius = 1.5];
\end{tikzpicture}
\end{subfigure}
\begin{subfigure}[c]{0.3\textwidth}
\begin{tikzpicture}
\draw[] (0,0) circle (1);
\draw[thick] (30:1)  arc (120:45:.5) -- +(1,-1);
\draw[thick] (-30:1) arc (240:315:.5) -- +(1,1);
\filldraw[white] (1.89,0) circle (0.5);
\draw[] (1.89,0) circle (0.5);
\draw[thick] (90:1) -- (90:1.4);
\draw[thick] (-90:1) -- (-90:1.4);
\draw[thick] (150:1) -- (150:1.2);
\draw[thick] (-150:1) -- (-150:1.2);
\draw[] (0.7,0) ellipse [x radius = 1.9, y radius = 1.5];
\end{tikzpicture}
\end{subfigure}
\caption{Adding a crossing to the outside of a $k$-strand tangle $T$ (left) to produce a new tangle $TX$ (right), via a planar arc diagram $D_k$ (right).}
\label{fig: add-crossing}
\end{figure}
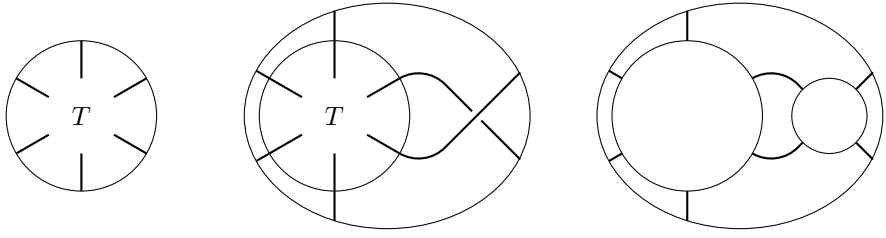

\begin{proof}
The proof follows exactly along the lines of \cite[Lemmas 8.6--8.9]{BarNatanTangles}. We briefly summarize this argument. The same argument works for all three choices of $\CC$; the case of $\Kob$ is Bar-Natan's argument.

First, we show that any pairing is $BS$-simple. Observe that if $T$ is a pairing, then $BS(T)$ consists of a single tangle (namely $T$, without orientation) in homological grading $0$, with vanishing differential, so it agrees with Bar-Natan's formal Khovanov complex $\Kh(T)$. Moreover, any morphism in $\Mor_{\CKob}(T,T)$ or $\Mor_{\CKob^f}(T,T)$ is homogeneous with respect to the homological grading. The proof of \cite[Lemma 8.6]{BarNatanTangles} then goes through (in all three categories) to show that any such morphism is a multiple of the identity. For the uniqueness in Definition \ref{def: simple}, first note that $T$ can be closed up with a planar arc diagram to obtain an unknot $U$. Formally, we can take a one-input planar arc diagram $D$, with no strands going to the outer boundary, such that $D(T)=U$. Then $D$ gives a functor from $\CKob_{/h}(B)$ to $\CKob_{/h}(\emptyset)$. The composition of this functor with the Khovanov TQFT takes $T$ to $\Kh(U)$, which is a free $R$-module of rank $2$. If $r \id_T = 0$ in $\Mor_{\CKob_{/h}}(T,T)$, then $r \id_{\Kh(U)} = 0$, so $r=0$. Thus, we deduce that $\Mor_{\CKob_{/h}}(T,T) = R$, as required. The same argument works in $\CKob^f$.

Second, we deduce from Lemma \ref{lemma: simple-complex} that if $T$ is $BS$-simple and $T'$ is obtained from $T$ by Reidemeister moves, then $T'$ is $BS$-simple. (See \cite[Lemma 8.7]{BarNatanTangles}.) The key point is that the Reidemeister maps constructed above (as well as their homotopy inverses) are degree-$0$, filtered homotopy equivalences.

Third, if $T$ is any tangle, let $TX$ denote the tangle obtained by adding a crossing as above, equipped with appropriate weighting and shading. Formally, we may compute $BS(TX)$ from $BS(T)$ by ``tensoring'' with $BS$ of a $1$-crossing tangle, using the planar arc diagram shown in Figure \ref{fig: add-crossing}. The argument from \cite[Lemmas 8.7 and 8.8]{BarNatanTangles} applies to show that $T$ is $BS$-simple if and only if $TX$ is. Further details are left to the reader.
\end{proof}

\section{Cobordism maps on the Batson--Seed complex} \label{sec: cobordism-maps}

In this section we will establish functoriality for our Batson--Seed tangle invariant. Namely, we describe how weighted cobordisms between links or tangles in $4$ dimensions induce filtered chain maps between the $BS$ complexes at either end. We then prove that these maps are invariant (up to filtered homotopy and multiplication by a unit) under isotopy rel boundary of the cobordisms. This argument is essentially parallel to Bar-Natan's argument for Khovanov homology \cite[Section 8]{BarNatanTangles}.

\subsection{Construction of the maps} \label{ssec: cobordism-construction}

To begin, recall that any smooth cobordism $C \subset \R^3 \times [0,1]$ between links $L, L'$ can be represented by a \emph{movie}, a sequence of link diagrams $L = L_0, \dots, L_k = L'$, where each $L_i$ is obtained from $L_{i-1}$ by a single \emph{elementary cobordism}: a Reidemeister move, birth, saddle, or death, as shown in Figure \ref{fig: elem}. Similarly, if $T,T'$ are oriented tangles in $D^3$ with $\partial T = \partial T'$, a cobordism from $T$ to $T'$ is a properly embedded, oriented surfaces $C \subset D^3 \times [0,1]$ with
\[
\partial C = (-T \times \{0\}) \cup (\partial T \times [0,1]) \cup (T \times \{1\}).
\]
A tangle cobordism can likewise be described by a series of tangle diagrams in $D^2$ with fixed boundary, each obtained from the previous one by an elementary cobordism. (See the book by Carter and Saito \cite{CarterSaitoBook} for a complete exposition.)

\begin{figure}[htp]
	\centering
	\includegraphics[width=12cm]{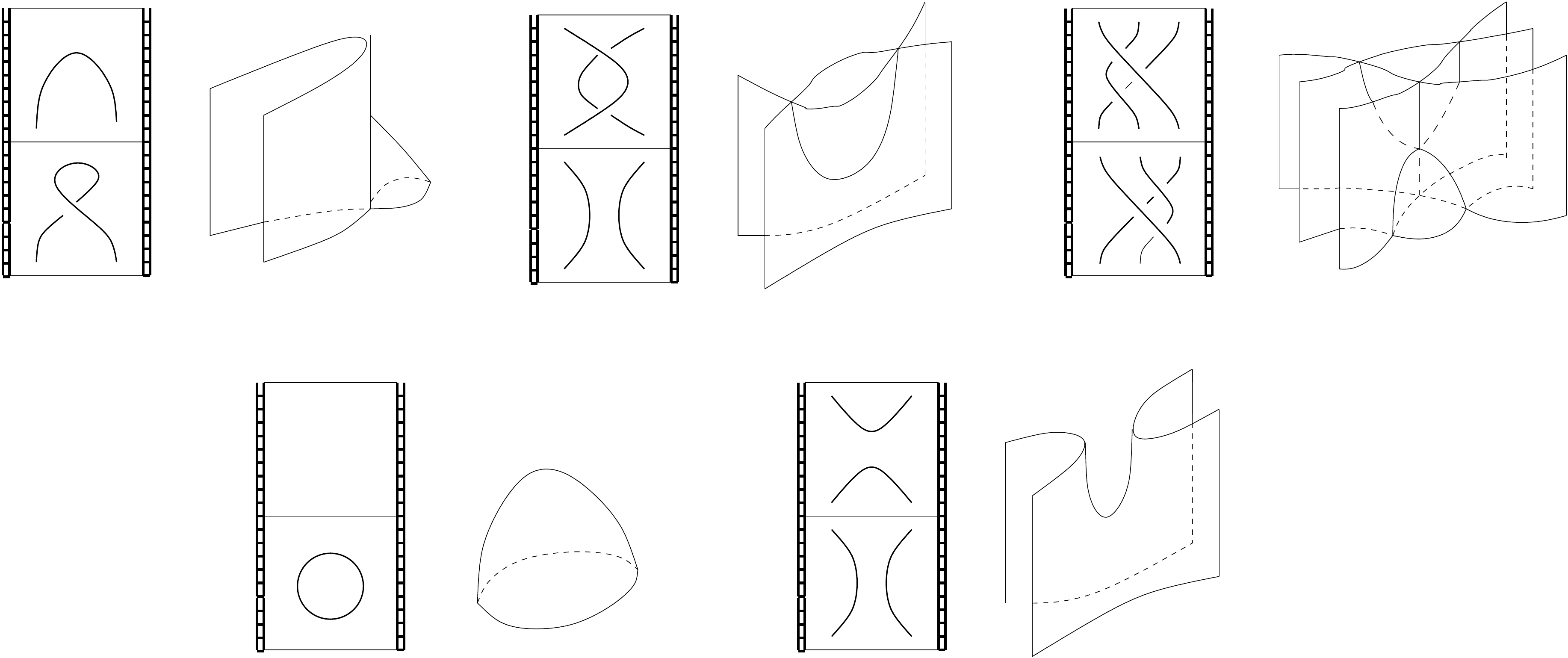}
	\caption{Elementary cobordisms; picture taken from \cite [Figure 10]{BarNatanTangles}.}
	\label{fig: elem}
\end{figure}

A \emph{weighted cobordism} is a link or tangle cobordism $C$ along with a weighting $w$ that assigns an element of $R$ to each component of $C$. A \emph{weighted movie} is a movie
\[
T_0 \xrightarrow{e_1} T_1 \xrightarrow{e_2} \dots \xrightarrow{e_k} T_k
\]
along with weights on each each $T_i$, such that the weights of any components in $T_{i-1}$ and $T_i$ that are connected by a component of $e_i$ agree. In particular, if $e_i$ is a saddle move that merges two components of $T_{i-1}$ into one component of $T_i$ or vice versa, then all three tangle components involved must have the same weight. Note that any movie for a weighted cobordism naturally becomes a weighted movie (where each component of each frame acquires the weight of the component of $C$ on which it lies), and any weighted movie determines a weighted cobordism.

For each weighted elementary cobordism $e$ between tangles $T,T'$, we define a morphism
\[
BS(e) \co BS(T,w) \to BS(T',w')
\]
as follows. The tangles $T$ and $T'$ are identical outside of a local ``region of interest,'' in which the two tangles differ as in Figure \ref{fig: elem}. In each case, we may define a chain map on the $BS$ complexes of the local region (using orientation, weighting, and shading induced from the larger tangles), and then extend it to the larger tangles using Theorem \ref{thm: BS-extend}, just like in \cite{BarNatanTangles}. More specifically:

\begin{enumerate}
\item For the elementary cobordism that corresponds to one of the three Reidemeister moves, we define the induced chain map on the $BS$ complex to be the homotopy equivalence constructed in Section \ref{sec: reidemeister}.

\item For the elementary cobordism that corresponds to a birth or a death, we define the induced chain map on the $BS$ complex locally as the birth or death cobordism between the empty tangle and the tangle consisting of a single closed circle. In either case, the weight of the closed circle is permitted to be an arbitrary element of $R$.

\item For a saddle cobordism, observe that both strands in both the source and target pictures are required to have the same weight, since they are all in the same component of the cobordism. The map $BS(T) \to BS(T')$ is defined to consist of a standard saddle cobordism (in $D^2 \times [0,1]$).
\end{enumerate}

We also note that $BS(e)$ is homogeneous of degree $p$ (with respect to the Batson--Seed grading) and $p$-filtered, where $p$ equals $0$ in the case of a Reidemeister move, $+1$ in the case of a birth or death, and $-1$ in the case of a saddle. The associated graded morphism in $\Kob$ (see Lemma \ref{lemma: filtered}) is given by the terms that preserve homological grading, which coincide precisely with the Khovanov map associated to $e$ in \cite{BarNatanTangles}, which we will denote $\Kh(e)$. Indeed, for all but the Reidemeister 3 move, we have $\Kh(e) = BS(e)$, since there are no terms that strictly decrease the homological grading.

Now, for a weighted cobordism $C$ from $T$ to $T'$, we represent $C$ by a weighted movie as above, and then define $BS(C)$ to be the composition of the maps induced by the corresponding elementary cobordisms $BS(e_i)$. Just as in the previous paragraph, the associated graded morphism is precisely $\Kh(C)$, as defined by Bar-Natan.

\subsection{Isotopy invariance} \label{ssec: isotopy}

We will now show that the $BS$ cobordism maps constructed in the previous section are invariant under isotopy rel boundary, which is precisely the content of Theorem \ref{thm: BS-func}, parts 1 and 2.

\begin{figure}
	\centering
	\includegraphics[width=9cm]{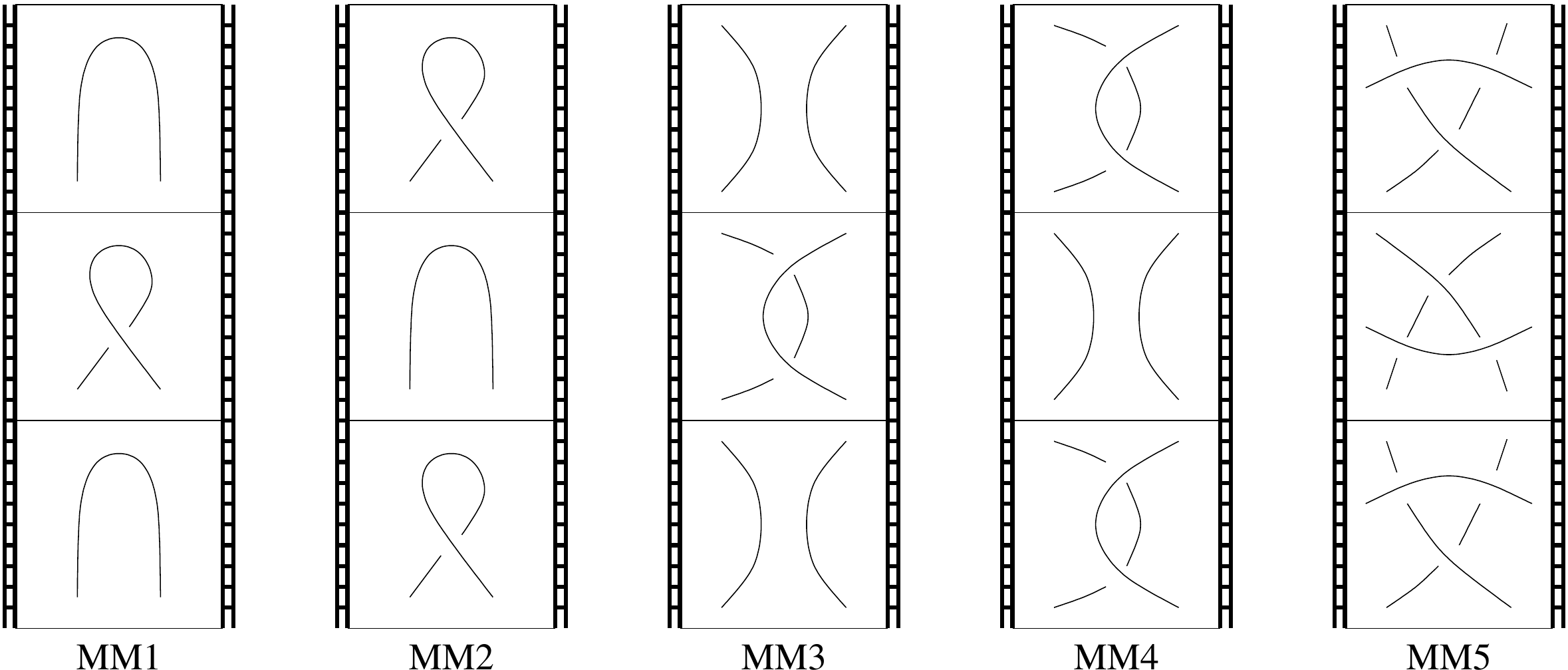}

\bigskip \bigskip

	\includegraphics[width=12cm]{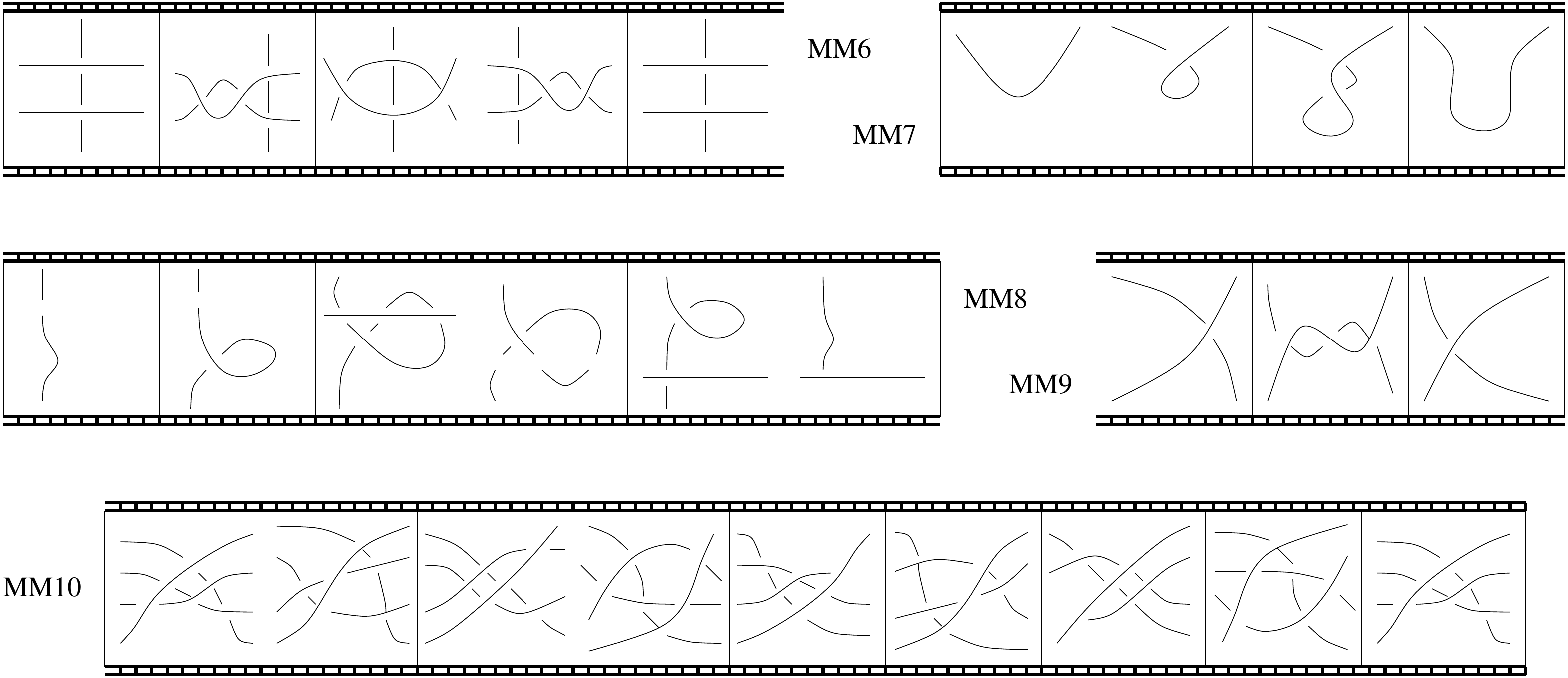}

\bigskip \bigskip

	\includegraphics[width=15cm]{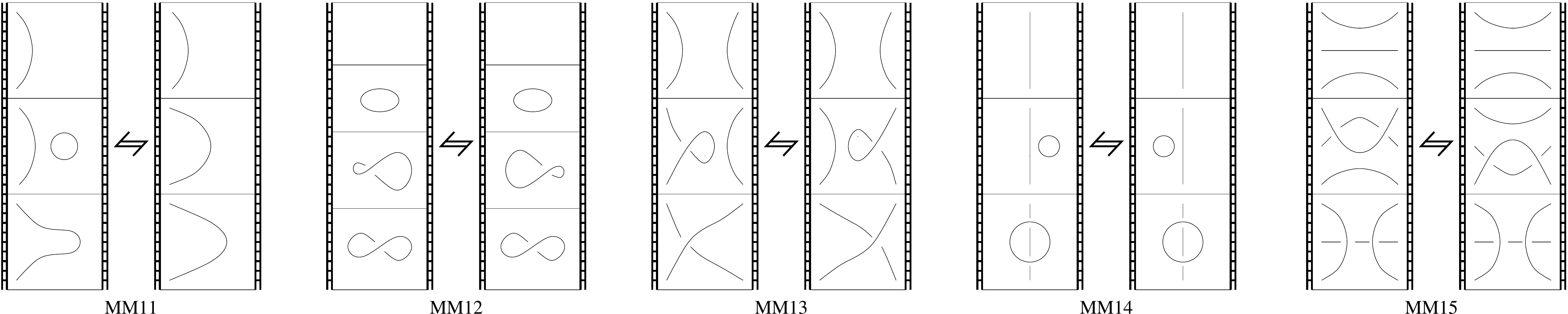}
\caption{Movie moves; figure taken from \cite[Figures 11--13]{BarNatanTangles}.}
\label{fig: movie-moves}
\end{figure}

Just as in \cite{KhovanovCobordism, JacobssonCobordisms, BarNatanTangles} and numerous other papers, the proof of isotopy invariance reduces to a verification for each of the \emph{movie moves} of Carter and Saito \cite{CarterSaitoBook}, shown in Figure \ref{fig: movie-moves}. That is, two movies represent isotopic cobordisms (rel boundary) iff they differ by a sequence of movie moves. (To clarify the meaning of these figures, each of the movies in MM1 through MM10 can be replaced by the constant movie, while each of the pairs of movies in MM11 through MM15 can be interchanged. In each case, the movies can be read in either direction.) The technical statement of invariance is the following:

\begin{theorem} \label{thm: movie-moves}
Let $L_0$ and $L_1$ be weighted tangles, and let $C$ and $C'$ be weighted cobordisms from $L$ to $L'$, presented by weighted movies $M$ and $M'$ respectively. If $M$ and $M'$ are related by a sequence of movie moves, then $BS(M)$ and $BS(M')$ are filtered homotopic up to multiplication by $\pm 1$.
\end{theorem}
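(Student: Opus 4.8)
The plan is to follow the template of Bar-Natan's proof of movie move invariance \cite[Section 8]{BarNatanTangles}, adapting each step to the setting of $\CKob$ and the Batson--Seed complex. First I would reduce to a local verification: by Theorem \ref{thm: BS-extend}, each movie move takes place inside a fixed disk $\Delta$ (or a fixed cylinder $\Delta \times [0,1]$) outside of which the tangle, weighting, and shading are constant, so it suffices to check that the two local movies induce filtered-homotopic chain maps (up to a unit) on the $BS$ complexes of the local tangles $T_1' = T_1 \cap \Delta$ and $T_2' = T_2 \cap \Delta$; the general statement then follows by extending via the planar algebra structure of Theorem \ref{thm: CKob-planar}, which preserves homotopy and the filtered condition.

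The heart of the argument is then the standard ``simple tangle'' dichotomy. For most of the movie moves (precisely, those whose source and target tangles are \emph{trivial} in the sense of Definition \ref{def: pairing}), Lemma \ref{lemma: trivial-tangle} tells us $BS(T_1')$ is $BS$-simple, and part (2) of Lemma \ref{lemma: simple-complex} then says that any two degree-$0$ homotopy equivalences between simple complexes agree up to a unit and a (filtered) homotopy. Since each chain map $BS(e_i)$ associated to an elementary cobordism is a homotopy equivalence or a homogeneous filtered map of the appropriate degree (Reidemeister moves give degree-$0$ homotopy equivalences by Theorem \ref{thm: BS-invariant}, births/saddles/deaths give degree $\pm 1$ filtered maps), the composite $BS(M)$ and the composite $BS(M')$ are both degree-$0$ filtered homotopy equivalences between the same pair of simple complexes (after accounting for the net degree shift, which matches since $M$ and $M'$ represent the same cobordism), hence homotopic up to a unit. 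This handles MM1--MM5 and most of MM6--MM15. The degree bookkeeping is routine: the degree of $BS(M)$ equals the degree of $\Kh(M)$, which is determined by the Euler characteristic and the number of dots, so it is the same for $M$ and $M'$.

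The remaining moves are the ones whose source or target is \emph{not} a trivial tangle --- these are the movie moves MM6, MM7, MM8, MM10 (in Bar-Natan's numbering, the ones involving a Reidemeister 1 or 2 kink together with a strand passing over or under, or a closed circle), where one cannot directly invoke simplicity. For these I would argue exactly as Bar-Natan does: one decomposes the complex as a mapping cone or uses the explicit form of the Reidemeister 1 and 2 maps from Figures \ref{fig: R1} and \ref{fig: R2} to reduce, after a Gaussian elimination, to a statement about a smaller complex that \emph{is} simple, or one checks the identity on the nose using the local relations in Figure \ref{fig: local-nodots}. The one genuinely new feature compared to \cite{BarNatanTangles} is that the $d_-$ differential is present, so I would need to verify that the maps and homotopies commute with $d_-$ as well as $d_+$; but since the relevant $d_-$ terms are just scalar multiples (by differences of weights) of the reverse $d_+$ saddles and the weights of the strands involved in these moves are constrained to agree (they lie on a single component), the same homotopies that work in Bar-Natan's setting continue to work --- one simply checks that $hd_- + d_- h = 0$ in each case, which holds for degree/weight reasons.

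The main obstacle I expect is precisely this last point: keeping careful track of the interaction of the homotopies with $d_-$ for the ``non-simple'' movie moves, and confirming that Bar-Natan's Gaussian-elimination reductions remain valid in $\CKob$ (where morphisms carry homological-degree shifts and $d^2 \ne 0$ in general). However, Lemmas \ref{lemma: filtered} and \ref{lemma: dminus0} and Remark \ref{rmk: commonfacts} ensure that the homological algebra machinery --- cones, Gaussian elimination, and homotopy arguments --- carries over to the curved setting, so this should amount to a careful but essentially mechanical check rather than a conceptual difficulty.
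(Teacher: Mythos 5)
Your overall strategy matches the paper's: reduce to a local check via Theorem \ref{thm: BS-extend}, dispose of the moves built entirely from Reidemeister moves using $BS$-simplicity of trivial tangles (Lemmas \ref{lemma: simple-complex} and \ref{lemma: trivial-tangle}), and handle the remaining moves by a more explicit comparison with Bar-Natan's maps. However, two points need repair. First, you have the taxonomy of the movie moves backwards: MM6--MM10 are precisely the ``circular'' moves composed entirely of Reidemeister moves on trivial tangles, and these are the ones the simplicity argument handles; the moves where simplicity fails are MM11--MM15, whose local tangles contain closed components or involve births/saddles/deaths. For those, the paper does not need Gaussian elimination in $\CKob$ (which is not developed there and would require justification in the curved setting): for MM11--MM13 all strands in the local picture lie on one component, so all weights agree, $d_-$ vanishes identically, and Bar-Natan's explicit homotopies apply verbatim; for MM14--MM15 the movies contain no Reidemeister 3 moves, so $BS$ of each elementary cobordism literally coincides with Bar-Natan's Khovanov map, and his verification that the two composites are \emph{equal} up to sign carries over with no new computation. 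Your instinct that the weight constraints tame $d_-$ is the right one, but it is deployed against the wrong moves.

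Second, the simplicity argument as you state it only yields agreement up to a \emph{unit} of $R$, whereas the theorem asserts agreement up to $\pm 1$. The paper closes this gap by passing to the associated graded: the composite for MM6--MM10 is a filtered degree-$0$ homotopy equivalence, hence filtered-homotopic to $r\,\id$ for some unit $r$; its associated graded map $f^{(0)}$ is exactly Bar-Natan's Khovanov map, which he shows is homotopic to $\pm\id$, and the uniqueness clause in Definition \ref{def: simple} (simplicity in $\Kob$ as well as in $\CKob^f$) forces $r = \pm 1$. Your proposal never extracts the sign from the unit, so as written it proves a weaker statement for arbitrary $R$.
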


\begin{proof}
The argument is very similar to the argument in \cite[Section 8]{BarNatanTangles}. By Theorem \ref{thm: BS-extend}, it suffices to give a local verification for each of the tangles appearing in the movie moves. Since the $BS$ chain maps are dependent on the weighting information, this means that we need to show that they are unchanged (up to filtered homotopy and multiplication by $\pm 1$) for an arbitrary weighting of each diagram that is compatible with the definition of a weighted cobordism. We now consider each of the cases:

\begin{itemize}
\item
The movies in MM1 through MM5 each consist of a Reidemeister move followed by its inverse. As seen in Section \ref{sec: reidemeister}, the composition is filtered homotopic to the identity, as required.

\item
The movies in MM6 through MM10 are each composed entirely of Reidemeister moves, so in each case, the induced map $f \co BS(T,w) \to BS(T,w)$ is a filtered, degree-$0$, undotted homotopy equivalence, where $(T,w)$ is the weighted tangle at the beginning and end of the movie. By Lemma \ref{lemma: trivial-tangle}, $T$ is $BS$-simple, so $f$ is filtered-homotopic to a unit scalar multiple of the identity, say $r \id$. Therefore, the associated graded map $f^{(0)}$ is homotopic to $r \id$ in the sense of $\Kob$. By Theorem \ref{thm: BS-invariant}, $f^{(0)}$ is precisely the map on formal Khovanov complexes defined by Bar-Natan, which is homotopic to $\pm \id$ \cite[p.~1479]{BarNatanTangles}. By the uniqueness provision in our definition of $BS$-simplicity, we deduce that $r = \pm 1$, as required.

\item
For MM11, MM12, and MM13, observe the weights of all components in each tangle picture must be the same. Hence, the $d_-$ differentials vanish, and the $BS$ complexes and morphisms coincide exactly with the $\Kh$ complexes and morphisms from \cite{BarNatanTangles}. Therefore, the explicit homotopies constructed in \cite[Page 1480]{BarNatanTangles} apply here as well.

\item
In MM14 and MM15, the $d_-$ differentials may be nontrivial depending on the weights. However, because the movies are composed only of Reidemeister 2, birth, death, and saddle moves (and no Reidemeister 3 moves), the induced chain maps coincide exactly with Bar-Natan's maps. As seen in \cite[p.~1481]{BarNatanTangles}, the chain maps for the two movies in each case are actually equal up to a sign, not just homotopic.

\end{itemize}

This completes the proof.
\end{proof}

\section{The Crossing Change map} \label{sec: crossing-change}

In this section, we describe the analogue of the crossing change map of \cite[Section 2.4]{BatsonSeed} in purely local terms. This will be essential for studying the effect of partition-homotopy on the maps described in the previous section.

\subsection{Construction of the maps}

Let $T$ be an oriented, weighted, shaded tangle diagram, and choose a crossing $c$ of $T$. Let $T'$ be the tangle that is the same as $T$, except that the crossing $c$ has been reversed; we may equip $T'$ with the same weighting and shading as $T$. We will define a chain map
\[
CC \co BS(T,w) \to BS(T',w),
\]
which is a chain map in the sense of Definition \ref{def: CKom}. Unlike the cobordism maps from Section \ref{sec: cobordism-maps}, $C$ will not be a filtered map; it may include terms that both increase and decrease the homological grading. If the crossings of $T$ are labeled $c_1, \dots, c_n$ and $c_i$ is the crossing being changed, we may refer to $CC$ as $CC(i)$ if needed.

\begin{definition} \label{def: crossmap}
First we specialize to the case where $T$ consists of a single crossing $c$ between two strands, with no closed components, as in Figure \ref{fig: CC}. Let $w_{\text{over}}$ and $w_{\text{under}}$ be the weights of the overstrand and understrand, respectively. Note that the $0$-resolution of $T$ is identical to the $1$-resolution of $T'$, and vice versa. We define $CC$ to consist of the identity cobordism from $T_0$ to $T'_1$, and $s(c)(w_{\text{over}} - w_{\text{under}})$ times the identity cobordism from $T_1$ to $T'_0$.

In the general case, we define $CC$ by extending the map from the previous (special) case by the identity using Theorem \ref{thm: BS-extend}.
\end{definition}

Figure \ref{fig: CC} illustrates the local crossing change maps (in both directions) for one possible choice of shadings. Each row shows the $BS$ complex of a $1$-crossing oriented tangle, where each arrow corresponds to a saddle cobordism times the ring element indicated (either $1$ or $b-a$). The numbers above and below the resolutions indicate the homological gradings, taking into account the shift of $-n_-$ in the definition. The upward and downward arrows give the entries in the crossing change maps in both directions; each corresponds to the identity cobordism times the ring element indicated. To be precise, if we take $T$ to be the upper tangle and $T'$ to be the lower one, we have $s(c)=1$, $w_{\text{over}} = b$, and $w_{\text{under}} = a$; while if $T$ is the lower tangle and $T'$ is the upper, we have $s(c)=-1$, $w_{\text{over}} = a$, and $w_{\text{under}} = b$; in either case, $s(c)(w_{\text{over}} - w_{\text{under}}) = b-a$. For the reverse shading, we simply replace $b-a$ with $a-b$ in all four places where it appears, by Remark \ref{rmk: change-shading}.

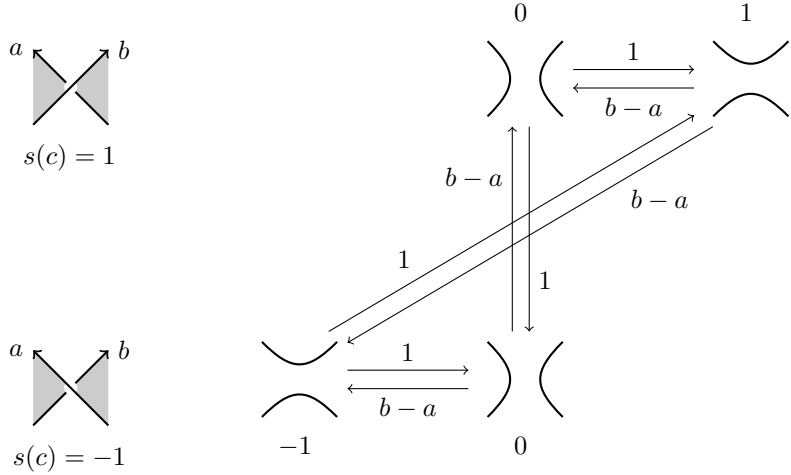
\begin{figure}
\begin{tikzpicture}
\node[label=south:{$s(c)=1$}](poscross) at (-6,0) {
\begin{tikzpicture}
      \filldraw[black!20!white] (0,1) -- (.5,.5) -- (0,0);
      \filldraw[black!20!white] (1,1) -- (.5,.5) -- (1,0);
      \draw [->,thick] (1,0) -- (0,1) node[at end, left] {$a$};
      \node[crossing] at (.5,.5) {};
      \draw [->,thick] (0,0) -- (1,1) node[at end, right] {$b$};
\end{tikzpicture}};

\node[label=south:{$s(c)=-1$}](negcross) at (-6,-4) {
\begin{tikzpicture}
      \filldraw[black!20!white] (0,1) -- (.5,.5) -- (0,0);
      \filldraw[black!20!white] (1,1) -- (.5,.5) -- (1,0);
      \draw [->,thick] (0,0) -- (1,1) node[at end, right] {$b$};
      \node[crossing] at (.5,.5) {};
      \draw [->,thick] (1,0) -- (0,1) node[at end, left] {$a$};
\end{tikzpicture}};

\node[label=north:{$0$}](top0res) at (0,0) {\vres} ;
\node[label=north:{$1$}](top1res) at (3,0) {\hres} ;
\node[label=south:{$-1$}](bottom0res) at (-3,-4) {\hres} ;
\node[label=south:{$0$}](bottom1res) at (0,-4) {\vres} ;
\draw [->] (top0res.10) -- (top1res.170) node[midway, above] {$1$};
\draw [->] (bottom0res.10) -- (bottom1res.170) node[midway, above] {$1$};
\draw [->] (top1res.190) -- (top0res.350) node[midway, below] {$b-a$};
\draw [->] (bottom1res.190) -- (bottom0res.350) node[midway, below] {$b-a$};

\draw [->] (top0res.280) -- (bottom1res.80) node[near end, right] {$1$};
\draw [->] (bottom1res.100) -- (top0res.260) node[near end, left] {$b-a$};

\draw [->] (top1res.235) -- (bottom0res.35) node[near start, below right] {$b-a$};
\draw [->] (bottom0res.55) -- (top1res.215) node[near start, above left] {$1$};
\end{tikzpicture}
\caption{The crossing change map. For the reverse shading, we replace $b-a$ with $a-b$ throughout.}
\label{fig: CC}
\end{figure}

\begin{lemma} \label{lemma: CC-chainmap}
For any tangle $T$, the map $CC$ is a valid chain map in $\CKob$. Moreover, if $w_{\text{over}} - w_{\text{under}}$ is invertible in $R$, then $CC$ is an isomorphism, with inverse given by $s(c) (w_{\text{over}} - w_{\text{under}})^{-1}$ times the reverse crossing change map.
\end{lemma}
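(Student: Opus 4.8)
The plan is to reduce both assertions to the one-crossing model of Definition~\ref{def: crossmap} via Theorem~\ref{thm: BS-extend}, and then to run a short explicit check on the cobordisms drawn in Figure~\ref{fig: CC}. Since $CC$ is defined by extending the one-crossing map by the identity, and the planar-algebra extension operation of Theorem~\ref{thm: CKob-planar} carries chain maps to chain maps, identities to identities, and commutes with composition, it is enough to treat the case in which $T$ is a single crossing. Write $\delta = w_{\text{over}} - w_{\text{under}}$. The combinatorial fact I would isolate first is that, by Figure~\ref{fig: checkerboard}, changing the crossing simultaneously reverses $s(c)$ and reverses the sign of $\delta$, so that the product $s(c)\delta$ is unchanged whether computed in $T$ or in $T'$; this is exactly why the coefficient $b-a$ decorates both the upward and the downward arrows of Figure~\ref{fig: CC}. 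By Remark~\ref{rmk: change-shading} it suffices to fix one shading, and by the isomorphisms of Section~\ref{ssec: signs} one orientation, so only the two rows of Figure~\ref{fig: CC} actually need to be inspected.

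For the chain-map claim I would verify $CC \circ d = d \circ CC$ entrywise, using that $d = d_+ + d_-$ on each complex and that the $0$-resolution of $T$ is identical to the $1$-resolution of $T'$ (and vice versa). Each of the two nonzero entries of $CC$ is an identity cobordism --- one with coefficient $1$, the other with coefficient $s(c)\delta$ --- so composing it with a saddle in $d_\pm$ simply reproduces the corresponding saddle on the other side; all that remains is the coefficient count, and each composite appearing in $CC\circ d$ and in $d\circ CC$ acquires exactly one factor of $s(c)\delta$, coming either from a $d_-$ term or from the nontrivial entry of $CC$, so the two sides match. There are no $(-1)^{m(v,v')}$ signs to track because a one-crossing tangle has a single edge in its resolution cube. (The automatic consequence $CC\circ\lambda_T = \lambda_{T'}\circ CC$ is then consistent with Proposition~\ref{prop: BS-curv}, since $s(p)$ and $w(p)$ are unchanged by the crossing change.)

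For the invertibility statement, assume $\delta$ is a unit and let $CC' \co BS(T') \to BS(T)$ be the reverse crossing change map. Both $CC$ and $CC'$ have exactly two nonzero entries, all of them identity cobordisms, so the composites $CC' \circ CC$ and $CC \circ CC'$ are diagonal, and on each summand the two entries multiply to $1 \cdot s(c)\delta$ or $s(c)\delta \cdot 1$; hence $CC' \circ CC = s(c)\delta \cdot \id_{BS(T)}$ and $CC \circ CC' = s(c)\delta \cdot \id_{BS(T')}$. As $s(c) = \pm 1$, we have $s(c)^{-1} = s(c)$, so $s(c)\delta^{-1} CC'$ is a two-sided inverse of $CC$; extending by the identity, and using once more that extension commutes with composition and preserves identities, yields the statement for arbitrary $T$. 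The only point that I expect to require genuine care is the sign bookkeeping of the first paragraph --- in particular the crossing-change invariance of $s(c)\delta$ and the reduction, via Remark~\ref{rmk: change-shading} and Section~\ref{ssec: signs}, of all orientation and shading cases to the pictures already on the page.
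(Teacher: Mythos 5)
Your proposal is correct and follows essentially the same route as the paper: reduce to the one-crossing case via the planar algebra structure (Theorem \ref{thm: BS-extend}/\ref{thm: CKob-planar}) and verify the chain-map and invertibility claims directly from Figure \ref{fig: CC}, with the key bookkeeping point being that $s(c)(w_{\text{over}}-w_{\text{under}})$ is unchanged by the crossing change. The paper's own proof is just a terser version of exactly this argument.
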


\begin{proof}
In the $1$-crossing case, each of these properties can be verified directly from Figure \ref{fig: CC}. Namely, each crossing change map commutes with the total differential, and the composition of the crossing change maps in both directions equals $(b-a)$ times the identity.

The general case then follows by applying the planar algebra structure of $\CKob$.
\end{proof}

\begin{remark} \label{rmk: CC-degree}
Due to the grading shifts of $n_+ - n_-$ in \eqref{eq: BS-chain}, the map $CC$ is of degree $+2$ (with respect to the Batson--Seed grading) when a negative crossing is changed to a positive crossing, and of degree $-2$ when a positive crossing is changed to a negative one. Additionally, note that $CC$ consists of terms of homological degree $0$ and $+2$ (resp. $0$ and $-2$) in those two cases. While $CC$ is a filtered map in the positive-to-negative case, its inverse map is not filtered, so it is not an isomorphism in $\CKob^f$.
\end{remark}

%

\begin{remark}
It is simple to describe the the crossing change map explicitly in the non-local case as well. Note that for each $v \in \{0,1\}^n$, there is a natural identification between the $v$-resolution of $T$ and the $v'$ resolution of $T'$, where $v'$ is obtained by changing the $i\Th$ entry of $v$ from $0$ to $1$ or $1$ to $0$. The morphism $CC(i) \co BS(T) \to BS(T')$ consists of the identity cobordism from $T_v$ to $T'_{v'}$ when $v_i = 0$, and of $s(c) (w_{\text{over}} - w_{\text{under}})$ times the identity cobordism from $T_v$ to $T'_{v'}$ when $v_i=1$. This definition essentially agrees with the one given by Batson and Seed \cite[Proof of Proposition 2.3]{BatsonSeed}, except that the signs are different (indeed, somewhat simplified) because of our modified sign convention, as discussed in Section \ref{ssec: signs}.
\end{remark}

\subsection{Commutation of crossing change maps and cobordism maps}

Because the crossing change maps are defined in purely local terms, the following two lemmas are immediate, using the planar algebra structure of $\CKob$:

\begin{lemma} \label{lemma: CC-commute}
Let $T$ be an oriented, weighted tangle diagram. Let $T'_i$ and $T'_j$ be obtained by respectively changing the $i\Th$ and $j\Th$ crossings of $T$ (where $i \ne j$), and let $T''$ be obtained by changing both crossings. Then the following diagram commutes:
\[
\xymatrix{
BS(T) \ar[r]^{CC(i)} \ar[d]_{CC(j)} & BS(T'_i) \ar[d]^{CC(j)} \\
BS(T'_j) \ar[r]^{CC(i)} & BS(T'')  }
\]

\end{lemma}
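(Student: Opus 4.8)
The plan is to reduce everything to the one-crossing local model, as suggested by the remark immediately preceding the statement. Concretely, I would start by choosing small disjoint disks $\Delta_i$ and $\Delta_j$ around the crossings $c_i$ and $c_j$ of $T$, together with small disks around every other crossing, and view the complement of these disks as a planar arc diagram $D$. By Theorem~\ref{thm: BS-local} we then have $BS(T) = D(\dots, BS(T\cap\Delta_i), \dots, BS(T\cap\Delta_j), \dots)$, and similarly for $T'_i$, $T'_j$, $T''$, where in each case one replaces the relevant single-crossing tangle by its reverse. By Definition~\ref{def: crossmap}, the maps $CC(i)$ and $CC(j)$ are defined as the planar-algebra extensions (via Theorem~\ref{thm: BS-extend}) of the local one-crossing crossing change maps, extended by identity morphisms on all other inputs.

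Next I would invoke the compatibility of the planar algebra operation with composition of morphisms, established in the proof of Theorem~\ref{thm: CKob-planar}: the $D$ operation on morphisms behaves exactly like a tensor product of chain maps, so for chain maps $\psi_1, \dots, \psi_d$ and $\psi'_1, \dots, \psi'_d$ with matching sources and targets, one has $D(\psi'_1, \dots, \psi'_d) \circ D(\psi_1, \dots, \psi_d) = D(\psi'_1 \circ \psi_1, \dots, \psi'_d \circ \psi_d)$ whenever the intermediate objects agree. Since $CC(i)$ acts as the local crossing-change map on the $\Delta_i$-slot and as the identity on the $\Delta_j$-slot, while $CC(j)$ does the reverse, both composites $CC(j)\circ CC(i)$ and $CC(i)\circ CC(j)$ around the square equal $D(\dots, CC_{\mathrm{loc}}(i), \dots, CC_{\mathrm{loc}}(j), \dots)$, the single planar-algebra product that applies the local crossing-change maps simultaneously in the $i$ and $j$ slots and the identity everywhere else. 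Hence the square commutes on the nose, not merely up to homotopy.

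The only genuinely delicate point is bookkeeping with the Koszul-type signs in \eqref{eq: product-d}: the planar-algebra differential (and hence, implicitly, the way chain maps sit inside the matrix formalism) carries signs $(-1)^{\sum_{j<i} r_j}$ depending on the homological gradings of the inputs. One must check that the sign picked up when $CC_{\mathrm{loc}}(i)$ shifts the homological grading in the $\Delta_i$-slot is the same regardless of whether $CC_{\mathrm{loc}}(j)$ has already been applied in the $\Delta_j$-slot. But this is exactly the statement that these signs are consistent under tensoring of chain maps of varying homological degree, which is the same elementary sign check that appears throughout the proof of Theorem~\ref{thm: CKob-planar} and in Remark~\ref{rmk: planar-commute}; the crossing-change maps are honest (possibly homological-degree-shifting) morphisms in $\CKom$, so the matrix formalism there applies verbatim. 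I expect writing this out to amount to a single paragraph invoking Theorems~\ref{thm: BS-local}, \ref{thm: BS-extend}, and \ref{thm: CKob-planar}, with the sign consistency being the one place one should pause; everything else is formal.

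\begin{proof}
Choose pairwise disjoint disks $\Delta_1, \dots, \Delta_n$ around the crossings $c_1, \dots, c_n$ of $T$, and let $D$ be the planar arc diagram obtained by deleting these disks from $\Delta$. By Theorem~\ref{thm: BS-local},
\[
BS(T) = D\bigl(BS(T\cap\Delta_1), \dots, BS(T\cap\Delta_n)\bigr),
\]
and $BS(T'_i)$, $BS(T'_j)$, $BS(T'')$ are obtained by the same formula after replacing $BS(T\cap\Delta_i)$ and/or $BS(T\cap\Delta_j)$ by the $BS$ complex of the reversed one-crossing tangle. By Definition~\ref{def: crossmap} and Theorem~\ref{thm: BS-extend}, the map $CC(i)$ is the planar-algebra product $D(I_1, \dots, CC_{\mathrm{loc}}(i), \dots, I_n)$, where $CC_{\mathrm{loc}}(i)$ is the local crossing-change map in the $i\Th$ input and $I_k$ is the identity chain map on the remaining inputs; similarly for $CC(j)$.

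Since, by the proof of Theorem~\ref{thm: CKob-planar}, the operation $D$ on morphisms is multiplicative under composition (just as for tensor products of chain maps, including the compatibility of the Koszul signs in \eqref{eq: product-d} for maps of arbitrary homological degree), both composites around the square are equal to the single planar-algebra product
\[
D\bigl(I_1, \dots, CC_{\mathrm{loc}}(i), \dots, CC_{\mathrm{loc}}(j), \dots, I_n\bigr) \co BS(T) \to BS(T''),
\]
using that $CC_{\mathrm{loc}}(i)$ commutes with the identity $I_j$ in a different input and vice versa. Hence $CC(j) \circ CC(i) = CC(i) \circ CC(j)$, so the diagram commutes.
\end{proof}
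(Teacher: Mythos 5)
Your proof is correct and follows exactly the route the paper intends: the paper declares this lemma ``immediate, using the planar algebra structure of $\CKob$,'' and your argument is just that statement spelled out via Theorems \ref{thm: BS-local}, \ref{thm: BS-extend}, and \ref{thm: CKob-planar}. The sign worry you flag is in fact vacuous here, since the morphism-level planar operation in the proof of Theorem \ref{thm: CKob-planar} carries no Koszul signs (all homological shifts of morphisms in $\CKom$ are even by definition), so the interchange law holds on the nose.
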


\begin{lemma} \label{lemma: CC-elem-commute}
Let $T_0$ and $T_1$ be oriented, weighted tangle diagrams that differ by a single elementary cobordism $e$ (a Reidemeister move, birth, saddle, death). Let $c_i$ be a crossing of $T_0$ that is not contained in the region of interest of $e$, and let $T'_0$ and $T'_1$ be obtained from $T_0$ and $T_1$ by changing $T_i$. Then the following diagram commutes:
\[
\xymatrix{ BS(T_0) \ar[r]^{BS(e)} \ar[d]_{CC(i)} & BS(T_1) \ar[d]^{CC(i)} \\
BS(T'_0) \ar[r]^{BS(e)} & BS(T'_1) }
\]
\end{lemma}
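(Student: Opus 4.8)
The plan is to exploit the hypothesis that the crossing $c_i$ lies \emph{outside} the region of interest of the elementary cobordism $e$, so that both maps in the square are ``localized'' in disjoint sub-disks and hence manifestly commute via the planar algebra structure of $\CKob$ established in Section \ref{ssec: planar}. First I would choose a two-input oriented planar arc diagram $D$ with input disks $\Delta_1, \Delta_2$ such that $\Delta_1$ contains the region of interest of $e$ together with every crossing of $T_0$ other than $c_i$, while $\Delta_2$ is a small disk containing only $c_i$; the complement of $\Delta_1 \cup \Delta_2$ in the ambient disk is then crossingless, as a planar arc diagram must be. Write $A_0 = T_0 \cap \Delta_1$, $A_1 = T_1 \cap \Delta_1$, $B = T_0 \cap \Delta_2$, and $B' = T'_0 \cap \Delta_2$, each equipped with the induced orientation, weighting, and shading. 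Then $T_0 = D(A_0,B)$, $T_1 = D(A_1,B)$, $T'_0 = D(A_0,B')$, and $T'_1 = D(A_1,B')$, all four insertions being compatible with $D$: reversing a crossing changes neither the endpoints of the tangle nor the assignment of weights to strand-components, so the compatibility in the sense of Section \ref{localstory} is unaffected. By Theorem \ref{thm: BS-local}, the same decomposition survives applying $BS$, e.g.\ $BS(T_0) = D(BS(A_0), BS(B))$.

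Next I would identify each of the two maps with a planar-algebra product. By the construction in Section \ref{ssec: cobordism-construction}, $BS(e) \co BS(T_0) \to BS(T_1)$ is by definition the extension by the identity (via Theorem \ref{thm: BS-extend}) of a local map supported in the region of interest of $e$; since that region lies inside $\Delta_1$, associativity of the planar algebra lets us rewrite this as $BS(e) = D(\phi_e, \id_{BS(B)})$ for a chain map $\phi_e \co BS(A_0) \to BS(A_1)$, and likewise $BS(e) = D(\phi_e, \id_{BS(B')})$ on the primed side. Similarly, Definition \ref{def: crossmap} defines $CC(i)$ as the extension by the identity of the local crossing change map $\psi \co BS(B) \to BS(B')$, so $CC(i) = D(\id_{BS(A_0)}, \psi)$ and $CC(i) = D(\id_{BS(A_1)}, \psi)$ on the two sides of the square.

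Finally I would invoke the functoriality of the operation $D$ on morphisms from the proof of Theorem \ref{thm: CKob-planar}, namely $D(\beta_1 \circ \alpha_1, \beta_2 \circ \alpha_2) = D(\beta_1,\beta_2) \circ D(\alpha_1,\alpha_2)$. (No Koszul signs intervene: every morphism in $\CKob$ has only even homological-degree components, so the sign that would appear in a tensor product of chain maps is trivial.) Going around the square one way yields $CC(i) \circ BS(e) = D(\id_{BS(A_1)}, \psi) \circ D(\phi_e, \id_{BS(B)}) = D(\phi_e, \psi)$, and the other way yields $BS(e) \circ CC(i) = D(\phi_e, \id_{BS(B')}) \circ D(\id_{BS(A_0)}, \psi) = D(\phi_e, \psi)$, so the square commutes on the nose.

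The only point that is not purely formal — and it is a mild one — is the setup in the first paragraph: checking that the region of interest of $e$ and the crossing $c_i$ can genuinely be separated by disjoint sub-disks with crossingless complement, and that the induced data on the four local tangles are compatible with $D$ so that Theorems \ref{thm: BS-local} and \ref{thm: BS-extend} apply verbatim. Given the disjointness hypothesis this is immediate, and the argument then runs uniformly over all four types of elementary cobordism $e$ (Reidemeister move, birth, saddle, death), with the Reidemeister case simply using the homotopy equivalence of Section \ref{sec: reidemeister} in the role of $\phi_e$.
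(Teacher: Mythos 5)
Your argument is correct and is exactly the one the paper intends: the paper dismisses this lemma as ``immediate, using the planar algebra structure of $\CKob$,'' and your proof is the honest expansion of that remark, separating $c_i$ from the region of interest by disjoint input disks and applying the interchange law for $D$ on morphisms. Your parenthetical about Koszul signs is also the right observation --- the unsigned product $D(\psi_1,\psi_2)$ of chain maps is a chain map (and the interchange law holds on the nose) precisely because all homological shifts $f^{r,r+2k}$ in $\CKom$ are even.
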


Lemma \ref{lemma: CC-commute} essentially says that distant crossing change maps commute with one another, while Lemma \ref{lemma: CC-elem-commute} says that crossing change maps that occur distant from an elementary cobordism map commute with that elementary cobordism map.

In order to prove our main theorem, we will also need to understand the interaction between Reidemeister 2 and 3 moves and crossing changes that take place within the region of interest. Let $T$ and $T'$ denote the initial and final local tangles of a Reidemeister $k$ move, where $k=2$ or $3$. We may label the strands of $T$ by $s_1,\dots, s_k$, and those of $T'$ by $s_1', \dots, s_k'$, such that $s_i$ corresponds to $s'_i$, and for $i<j$, $s_i$ crosses under $s_j$ and $s'_i$ under $s'_j$.

Let $\sigma$ be a permutation of $\{1,\dots, k\}$. Let $T_\sigma$ and $T'_\sigma$ be obtained from $T$ and $T'$, respectively, by changing the crossing between $s_i$ and $s_j$ (for $i<j$) iff $\sigma(i)>\sigma(j)$. (In other words, we change the order in which the strands are stacked.) It is easy to verify that $T'_\sigma$ then differs from $T_\sigma$ by a Reidemeister move of the same type, which we denote $e_\sigma$.

\begin{lemma} \label{lemma: CC-reid}
Let $T, T', T_\sigma, T'_\sigma$ be as above, and equip all four tangles with compatible orientation, weighting, and shading. Let $w_i$ denote the weight on $s_i$ (and on the corresponding strands in the other tangles), and assume that if $i<j$ and $\sigma(i)>\sigma(j)$, then $w_i - w_j$ is invertible in $R$. Then:
\begin{enumerate}
\item \label{item: CC-reid-iso} The maps $CC \co BS(T) \to BS(T_\sigma)$ and $CC \co BS(T') \to BS(T_{\sigma}')$ obtained by composing the relevant crossing change maps are each isomorphisms in $\CKob$, and they are both homogeneous of the same degree with respect to the Batson--Seed grading.

\item \label{item: CC-reid-commute} The diagram
\begin{equation} \label{eq: CC-reid}
\xymatrix{
BS(T) \ar[r]^{BS(e)} \ar[d]_{CC} & BS(T') \ar[d]^{CC} \\
BS(T_\sigma) \ar[r]^{BS(e_\sigma)}  & BS(T'_\sigma)  }
\end{equation}
commutes up to homotopy and multiplication by a unit.
\end{enumerate}
\end{lemma}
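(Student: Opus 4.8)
The plan is to deduce both parts purely from the local structure already established, without any further explicit chain-level computation; the whole point is to route everything through the fact that the tangles in sight are trivial, hence $BS$-simple, so that maps are pinned down up to a unit by their degree.

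First I would handle part \eqref{item: CC-reid-iso}. By definition and by Lemma \ref{lemma: CC-commute}, the map $CC \co BS(T) \to BS(T_\sigma)$ is the composition, in any order, of the single crossing-change maps $CC(c)$ over exactly those crossings $c$ of $T$ that are reversed, i.e.\ the crossing between $s_i$ and $s_j$ for each $i<j$ with $\sigma(i)>\sigma(j)$. For any such $c$, the relevant weight difference $w_{\text{over}}-w_{\text{under}}$ equals $\pm(w_i-w_j)$, which is a unit in $R$ by hypothesis, so each $CC(c)$ is an isomorphism in $\CKob$ by Lemma \ref{lemma: CC-chainmap}, and therefore so is the composite; the identical argument applies to $CC \co BS(T') \to BS(T'_\sigma)$ via the correspondence $s_i \leftrightarrow s'_i$. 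For the degree claim, recall from Remark \ref{rmk: CC-degree} that reversing a positive (resp.\ negative) crossing contributes $-2$ (resp.\ $+2$) to the Batson--Seed degree. Since a Reidemeister $k$ move preserves the set of crossings together with their signs, the crossing between $s_i$ and $s_j$ in $T$ has the same sign as the crossing between $s'_i$ and $s'_j$ in $T'$, so the two composites have equal degree, which I will call $d_0$. (In the Reidemeister $2$ case this is especially clean: $T$ has no crossings, so $CC$ is the identity and $d_0 = 0$, while on $T'$ the two crossings reversed have opposite signs and contribute $-2+2=0$.)

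For part \eqref{item: CC-reid-commute} it suffices to show that the two composites
\[
CC \circ BS(e) \co BS(T) \to BS(T'_\sigma)
\qquad\text{and}\qquad
BS(e_\sigma) \circ CC \co BS(T) \to BS(T'_\sigma)
\]
agree in $\CKob_{/h}$ up to multiplication by a unit of $R$. Both are homotopy equivalences: $BS(e)$ and $BS(e_\sigma)$ are homotopy equivalences by Theorem \ref{thm: BS-invariant}, and the vertical maps are isomorphisms by part \eqref{item: CC-reid-iso}. Both are homogeneous of degree $d_0$, since the Reidemeister maps have degree $0$ and, by part \eqref{item: CC-reid-iso}, each vertical $CC$ has degree $d_0$. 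Finally, $T$ and $T'_\sigma$ are trivial tangles --- they are local pieces of Reidemeister moves, exactly as invoked in the treatment of movie moves MM6--MM10 in the proof of Theorem \ref{thm: movie-moves} --- so $BS(T)$ and $BS(T'_\sigma)$ are $BS$-simple by Lemma \ref{lemma: trivial-tangle}. The second part of Lemma \ref{lemma: simple-complex} then says that any two degree-$d_0$ homotopy equivalences $BS(T) \to BS(T'_\sigma)$ agree up to a unit, which is precisely the statement that \eqref{eq: CC-reid} commutes up to homotopy and multiplication by a unit.

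The only genuinely delicate step is the sign bookkeeping in part \eqref{item: CC-reid-iso}: one must check that the crossings reversed on the source and target sides of the move correspond under $s_i \leftrightarrow s'_i$ and carry the same writhe signs, so that the two composites above land in the same degree and the simplicity argument can be run. Beyond that, the proof is an assembly of facts already established; in particular, no explicit identification of cobordism maps with their Batson--Seed analogues is required, which is exactly the advantage of passing through $BS$-simplicity rather than computing directly.
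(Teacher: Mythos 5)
Your proposal is correct and follows essentially the same route as the paper: part (1) via Lemma \ref{lemma: CC-chainmap} plus a count of the signs of the reversed crossings (Remark \ref{rmk: CC-degree}), and part (2) by observing that both composites around the square are homotopy equivalences of equal Batson--Seed degree between $BS$-simple complexes (Lemma \ref{lemma: trivial-tangle}), hence agree up to a unit by Lemma \ref{lemma: simple-complex}(2). Your explicit treatment of the Reidemeister 2 case, where the two sides have different crossing sets but the reversed crossings on the two-crossing side have opposite signs, fills in the degree comparison in exactly the way the paper intends.
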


\begin{proof}
For statement \ref{item: CC-reid-iso}, the $CC$ maps are isomorphisms by Lemma \ref{lemma: CC-chainmap} and the hypotheses, since the crossing changes only occur between strands whose differences of weights are invertible. Moreover, the numbers of positive-to-negative and negative-to-positive crossing changes are the same for both $T \to T_\sigma$ and $T' \to T'_\sigma$, so the  degrees of the two $CC$ maps agree by Remark \ref{rmk: CC-degree}. (However, note that they are not filtered isomorphisms, or even filtered homotopy equivalences.)

To prove statement \ref{item: CC-reid-commute}, note that the tangles $T$ and $T'_\sigma$ are each $BS$-simple, by Lemma \ref{lemma: trivial-tangle}. The compositions $BS(e_\sigma) \circ CC$ and $CC \circ BS(e)$ are each homotopy equivalences from $BS(T)$ to $BS(T'_\sigma)$, with the same quantum degree. Therefore, by part 2 of Lemma \ref{lemma: simple-complex}, they are homotopic up to a unit scalar, as required.
\end{proof}

\section{Splitting cobordisms} \label{sec: main-theorem}

Having assembled all the necessary ingredients, we now turn to the proofs of Theorem \ref{thm: main}, Corollary \ref{cor: closed-component}, and Theorem \ref{thm: BS-func}\eqref{item: BS-func-homotopy}. All of these will follow from a more technical statement in the world of tangles.

To begin, we describe a topological operation on tangles and movies that we call \emph{vertical separation}. Let $T$ and $T'$ be oriented tangles with the same boundary, and let $C$ be a tangle cobordism from $T$ to $T'$, represented by a movie $M$ of the form
\[
T = T_0 \xrightarrow{e_1} T_1 \xrightarrow{e_2} \cdots \xrightarrow{e_{r-1}} T_{r-1} \xrightarrow{e_r} T_r = T'.
\]
Assume that we are given a decomposition $C = C^1 \cup \dots \cup C^k$, where each $C^j$ is a possibly disconnected surface. For $i=0, \dots, r$, let $T_i^j$ denote the portion of $T_i$ contained on $C^j$, which we may think of as a tangle diagram in its own right; we refer to $T_i^1, \dots, T_i^k$ as the \emph{parts} of $T_i$. (Some of the $T_i^j$ may be empty.) For each $j$, we also obtain a valid movie $M^j$ of the form
\[
T_0^j \xrightarrow{e_1^j} T_1^j \xrightarrow{e_2^j} \cdots \xrightarrow{e_{r-1}^j} T_{r-1}^j \xrightarrow{e_r^j} T_r^j
\]
(where each $e_i^j$ is either an elementary cobordism or a planar isotopy) by simply erasing all but the $T_i^j$ component of each from of $M$. Let $\tilde T_i$ be the tangle diagram obtained from $T_i$ by changing the crossings between $T_i^j$ and $T_i^{j'}$ (for every pair $j,j'$ with $j<j'$) to make the strand of $T_i^{j'}$ the overstrand at every crossing. Let $\tilde T_i^j$ denote the part of $\tilde T_i$ obtained from $T_i^j$; by ignoring the rest of the diagram, we see that $\tilde T_i^j = T_i^j$.

\begin{figure}
\labellist
 \pinlabel $M$ at 54 5
 \pinlabel $\sep(M)$ at 179 5
 \pinlabel $\splitt(M)$ at 312 5
 \small
 \pinlabel $L_0^1$ [r] at 16 188
 \pinlabel {{\color{red} $L_0^2$}} [l] at 91 188
 \pinlabel $L_1^1$ [r] at 16 136
 \pinlabel {{\color{red} $L_1^2$}} [l] at 91 136
 \pinlabel $L_2^1$ [r] at 16 84
 \pinlabel {{\color{red} $L_2^2$}} [l] at 91 84
 \pinlabel $L_3^1$ [r] at 16 32
 \pinlabel {{\color{red} $L_3^2$}} [l] at 91 32
\endlabellist
\includegraphics{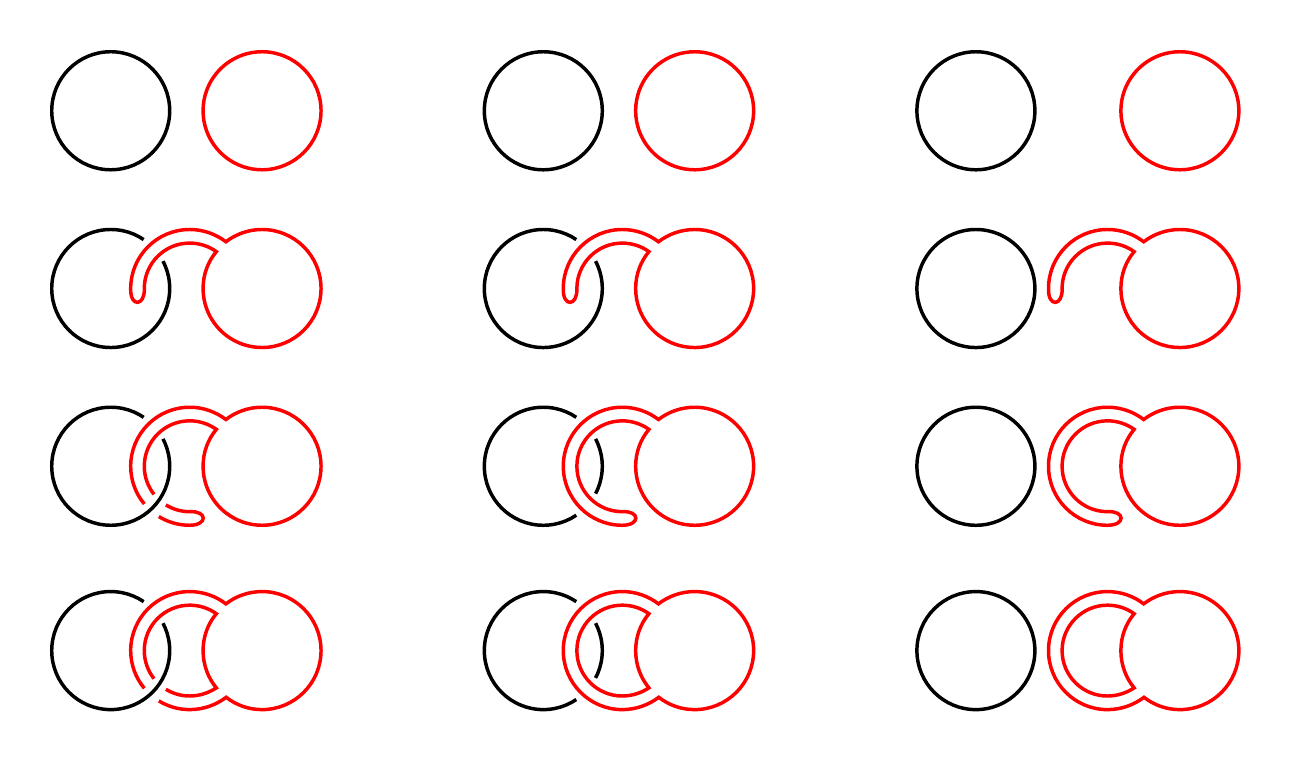}
\caption{A movie $M$ for a link cobordism (left column, consisting of two Reidemeister 2 moves followed by a saddle), and the corresponding movies $\sep(M)$ (center) and $\splitt(M)$ (right).}
\label{fig: splitting-movie}
\end{figure}

Observe that for each $i=1, \dots, r$, $\tilde T_i$ is obtained from $\tilde T_{i-1}$ by an elementary cobordism $\tilde e_i$ of the same type as $e_i$. This is trivial to see in the case where $e_i$ is a birth, a saddle, or a Reidemeister move that only involves strands that all belong to the same part. In the case of a Reidemeister 2 or 3 move between strands belonging to different parts, the local picture is just like in Lemma \ref{lemma: CC-reid}, where we have simply permuted the heights of the strands belonging to different parts, so $\tilde T_i$ is again obtained from $\tilde T_{i-1}$ by a Reidemeister move. Thus, the movie
\begin{equation} \label{eq: sep(M)-def}
\tilde T := \tilde T_0 \xrightarrow{\tilde e_1} \tilde T_1 \xrightarrow{\tilde e_2} \cdots \xrightarrow{\tilde e_{r-1}} \tilde T_{r-1} \xrightarrow{\tilde e_r} \tilde T_r =: \tilde T'
\end{equation}
is a valid weighted movie, which we denote by $\sep(M)$. See Figure \ref{fig: splitting-movie} for an example.

We may think of our tangles as lying in the $3$-ball $D^2 \times [0,1]$ and our tangle cobordisms as lying in $D^2 \times [0,1] \times [0,1]$, where a tangle diagram is obtained by projection onto the first factor. We use $(x,y,z,t)$ as coordinates. The tangle diagrams $\tilde T_i$ can be lifted into three dimensions in such a way that the lift of $\tilde T_i^j$ lies in $D^2 \times [\frac{j-1}{k}, \frac{j}{k}]$. Likewise, the movie $\sep(M)$ represents a cobordism $\sep(C) = \tilde C^1 \cup \dots \cup \tilde C^k$, where each $\tilde C^j$ lies in $D^2 \times [\frac{j-1}{k}, \frac{j}{k}] \times [0,1]$. Indeed, $\tilde C^j$ is simply obtained by compressing $C^j$ in the $z$ direction to lie within the specified interval.

Our main technical result says that under the right weighting hypotheses, the cobordisms $C$ and $\sep(C)$ induce homotopic maps on Batson-Seed complexes.

\begin{theorem} \label{thm: sep-tangle}
Let $(C,w)$ be a weighted tangle cobordism from $T$ to $T'$, represented by a movie $M$ and equipped with a decomposition $C = C^1 \cup \dots \cup C^k$ as above. Assume that if $w^j$ is the weight of some component of $C^j$ and $w^{j'}$ is the weight of some component of $C^{j'}$ for $j \ne j'$, then $w^j - w^{j'}$ is invertible in $R$. Give each tangle $T_i$ and $\tilde T_i$ the weighting induced from $w$. Then:
\begin{enumerate}
\item \label{item: sep-CCiso}
For each $i=0, \dots, r$, the morphism $CC \co BS(T_i) \to BS(\tilde T_i)$ given as the composition of the necessary crossing change maps is an isomorphism in $\CKob$, and it is homogeneous with respect to the Batson--Seed grading, with degree independent of $i$.

\item \label{item: sep-commute}
Each square in the diagram
\begin{equation} \label{eq: sep(M)-broken-down}
\xymatrix@C=0.6in{
BS(T_0) \ar[r]^{BS(e_1)} \ar[d]_{CC} & BS(T_1) \ar[r]^-{BS(e_2)} \ar[d]_{CC} & \dots \ar[r]^-{BS(e_{r-1})} & BS(T_{r-1}) \ar[r]^{BS(e_r)} \ar[d]_{CC} & BS(T_r) \ar[d]^{CC} \\
BS(\tilde T_0) \ar[r]^{BS(\tilde e_1)}  & BS(\tilde T_1) \ar[r]^-{BS(\tilde e_2)} & \dots \ar[r]^-{BS(\tilde e_{r-1})} & BS(\tilde T_{r-1}) \ar[r]^{BS(\tilde e_r)} & BS(\tilde T_r)
}
\end{equation}
commutes up to homotopy and multiplication by a unit, and therefore the same is true for the composite square
\begin{equation} \label{eq: sep(M)}
\xymatrix@C=0.6in{
BS(T) \ar[r]^{BS(M)} \ar[d]_{CC} & BS(T') \ar[d]^{CC}\\
BS(\tilde T) \ar[r]^{BS(\sep(M))} & BS(\tilde T').
}
\end{equation}
\end{enumerate}
\end{theorem}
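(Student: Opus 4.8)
The plan is to establish \eqref{item: sep-CCiso} by direct inspection, prove \eqref{item: sep-commute} by analyzing each square of \eqref{eq: sep(M)-broken-down} one at a time according to the type of $e_i$, and then obtain the composite square \eqref{eq: sep(M)} by pasting.

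For \eqref{item: sep-CCiso}: by construction $CC\co BS(T_i)\to BS(\tilde T_i)$ is the composite of the maps $CC(l)$ over all crossings $l$ of $T_i$ that lie between two distinct parts $T_i^j,T_i^{j'}$ (with $j<j'$) and at which $T_i^j$ is the overstrand. These changes occur at distinct crossings, so they commute by Lemma \ref{lemma: CC-commute} and the composite is well defined; each involves a weight difference of the form $w^j-w^{j'}$, invertible by hypothesis, so each $CC(l)$ is an isomorphism by Lemma \ref{lemma: CC-chainmap}, hence so is $CC$. By Remark \ref{rmk: CC-degree}, $CC(l)$ is homogeneous of degree $-2\,\mathrm{sign}(l)$ (a crossing change reverses the sign of its crossing), so $CC$ is homogeneous of degree $-2\sum_{l}\mathrm{sign}(l)$, the sum running over the set $X_i$ of inter-part crossings of $T_i$ at which the lower-indexed part is the overstrand. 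To see that $\sum_{l\in X_i}\mathrm{sign}(l)$ is independent of $i$, note that births, deaths, saddles, and Reidemeister moves internal to a single part neither create nor destroy nor alter inter-part crossings; a Reidemeister $2$ move between two parts inserts or deletes a pair of inter-part crossings of opposite sign at which the same strand is the overstrand (since R2 preserves the writhe), contributing $0$ to the sum; and a Reidemeister $3$ move between parts preserves all crossings together with their signs and their over/under relations. Hence $\deg(CC)$ is the same for all $i$.

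For \eqref{item: sep-commute}, fix $i$; recall that $\tilde e_i$ is an elementary cobordism of the same type as $e_i$. Write $CC=CC^{\mathrm{out}}\circ CC^{\mathrm{in}}$ on each of $BS(T_{i-1})$ and $BS(T_i)$, where $CC^{\mathrm{in}}$ collects the crossing changes at inter-part crossings in the region of interest of $e_i$ and $CC^{\mathrm{out}}$ collects the remaining ones (the two groups commute by Lemma \ref{lemma: CC-commute}, and the ``outside'' crossings of $T_{i-1}$ and $T_i$, with their signs and over/under data, are identified since $e_i$ only alters the region of interest). This factors the $i$-th square into a top square with vertical maps $CC^{\mathrm{in}}$ and a bottom square with vertical maps $CC^{\mathrm{out}}$. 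The bottom square commutes on the nose by repeated application of Lemma \ref{lemma: CC-elem-commute}, each ``outside'' change being distant from the region of interest of $\tilde e_i$. For the top square: if $e_i$ is a birth, death, saddle, or a Reidemeister move internal to one part, then $CC^{\mathrm{in}}=\id$ and it commutes trivially; if $e_i$ is a Reidemeister $2$ or $3$ move whose region of interest meets at least two parts, let $\sigma$ be the permutation that stably sorts the strands of the region of interest by part index, so that the top square is exactly the diagram \eqref{eq: CC-reid} of Lemma \ref{lemma: CC-reid} --- whose invertibility hypothesis holds because any pair of strands inverted by $\sigma$ lies in distinct parts --- and hence commutes up to homotopy and a unit. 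Pasting the two squares, the $i$-th square of \eqref{eq: sep(M)-broken-down} commutes up to homotopy and a unit.

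Finally, \eqref{eq: sep(M)} follows by horizontal pasting: composing the relations $CC\circ BS(e_i)\simeq u_i\,BS(\tilde e_i)\circ CC$ over $i=1,\dots,r$ and using that chain homotopies survive pre- and post-composition gives $CC\circ BS(M)\simeq \big(\prod_i u_i\big)\,BS(\sep(M))\circ CC$ with $\prod_i u_i$ a unit. The main obstacle is the Reidemeister $2$ and $3$ case of the top square in \eqref{item: sep-commute}, where the local strand-reordering must be matched precisely with the setup of Lemma \ref{lemma: CC-reid}, together with the writhe-type invariance underlying the degree computation in \eqref{item: sep-CCiso}; the rest is formal diagram-pasting and the lemmas of Section \ref{sec: crossing-change}.
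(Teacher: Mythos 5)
Your proposal is correct and follows essentially the same route as the paper: part (1) via Lemmas \ref{lemma: CC-chainmap}, \ref{lemma: CC-commute}, and Remark \ref{rmk: CC-degree}, and part (2) by factoring each square into an ``inside the region of interest'' square handled by Lemma \ref{lemma: CC-reid} and an ``outside'' square handled by Lemma \ref{lemma: CC-elem-commute}, then pasting. Your only addition is that you actually carry out the writhe-type verification that the signed count of changed crossings is independent of $i$ (tracking how R2 and R3 moves between parts affect the inter-part crossing set), which the paper dismisses as ``easy to verify.''
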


\begin{proof}
For statement \ref{item: sep-CCiso}, the hypothesis on the weights together with Lemma \ref{lemma: CC-chainmap} guarantees that each crossing map is an isomorphism. (Note that the order of composition of the individual crossing change maps does not matter, thanks to Lemma \ref{lemma: CC-commute}.) Moreover, it is easy to verify that the signed count of crossings that are changed in going from $T_i$ to $\tilde T_i$ is independent of $i$; together with Remark \ref{rmk: CC-degree}, this shows that the degree of the composite crossing map is independent of $i$ as well.

For statement \ref{item: sep-commute}, in the case where the elementary cobordism $e_i$ is is a birth, a saddle, or a Reidemeister move that only involves strands that all belong to the same $T^j_i$, the square commutes on the nose by Lemma \ref{lemma: CC-elem-commute} since all crossing changes are outside the region of interest. The one remaining case is where $e_i$ is a Reidemeister 2 or 3 move between strands belonging to different $T^j_i$, where some crossings may have changed inside the region of the interest.

Let $\intt(T_{i-1})$, $\intt(T_i)$, $\intt(\tilde T_{i-1})$, and $\intt(\tilde T_i)$ denote the intersections of each of the four tangles with the region of interest, where $\intt(T_{i-1})$ and $\intt(T_i)$ are the before and after pictures of a Reidemeister move $e$. Let $\ext(T_{i-1})$, $\ext(T_i)$, $\ext(\tilde T_{i-1})$, and $\ext(\tilde T_i)$ denote the intersections with a disk that contains all the remaining crossings, so that the complement of the two subtangles forms a planar arc diagram $D$. Then $\ext(T_{i-1})$ and $\ext(T_i)$ are identical, as are $\ext(\tilde T_{i-1})$ and $\ext(\tilde T_i)$, while the former is related to the latter by crossing changes. Each of the subtangles acquires its orientation, weighting, and shading from the larger tangle. Moreover, the $\intt$ tangles are related precisely as in Lemma \ref{lemma: CC-reid}, for some permutation $\sigma$.

By Theorem \ref{thm: BS-local}, we have $BS(T_i) = D(BS(\intt(T_i)), BS(\ext(T_i)))$, and so on. Moreover, the square from \eqref{eq: sep(M)-broken-down} can be broken down as
\[
\xymatrix@C=1in{
D(BS(\intt(T_{i-1})), BS(\ext(T_i))) \ar[r]^{D(BS(e),\id)}  \ar[d]_{D(CC, \id)}  & D(BS(\intt(T_{i})), BS(\ext(T_i))) \ar[d]^{D(CC, \id)} \\
D(BS(\intt(\tilde T_{i-1})), BS(\ext(T_i))) \ar[r]^{D(BS(\tilde e),\id)}  \ar[d]_{D(\id, CC)}  & D(BS(\intt(\tilde T_i)), BS(\ext(T_i))) \ar[d]^{D(\id,CC)} \\
D(BS(\intt(\tilde T_{i-1})), BS(\ext(\tilde T_i))) \ar[r]^{D(BS(\tilde e),\id)}   & D(BS(\intt(\tilde T_i)), BS(\ext(\tilde T_i))).
}
\]
Here, the upper pair of vertical arrows are induced by the crossing changes inside the region of interest, while the lower pair are induced by the remaining crossing changes. The upper square commutes up to homotopy by Lemma
\ref{lemma: CC-reid}, while the lower square commutes on the nose by Lemma \ref{lemma: CC-elem-commute}. This completes the proof.
\end{proof}

We now specialize to the setting of links rather than tangles. Here, there is a further operation to consider: \emph{horizontal splitting}.

Let $\Delta_1, \dots, \Delta_k$ be disjoint disks in $\R^2$. If $L$ is a link diagram with a partition $L = L^1 \cup \dots \cup L^k$ (where the parts $L^j$ may be links, or may even empty), let $\splitt(L)$ be the diagram consisting of a copy of $L^j$ inside of each $\Delta_j$, which is uniquely determined up to planar isotopy. The diagrams $\sep(L)$ and $\splitt(L)$ represent isotopic links and are related by a series of Reidemeister 2 and 3 moves; to see this, we simply slide the parts of $\sep(L)$ apart linearly in generically chosen directions. For any system of weights $w$, we have
\begin{equation} \label{eq: BS-split}
BS(\splitt(L), w) = BS(L^1, w^1) \otimes \dots \otimes BS(L^k, w^k),
\end{equation}
where $w^j$ denotes the restriction of $w$ to $L^j$, and the tensor product represents the planar algebra multiplication on $\Kob(\emptyset)$ coming from the $k$-input planar arc diagram with no strands.

Similarly, if $C = C^1 \cup \dots \cup C^k$ is a partition-preserving cobordism from $L = L^1 \cup \dots \cup L^k$ to $L' = L'{}^1 \cup \dots \cup L'{}^k$, represented by a movie $M$ of the form
\[
L = L_0 \xrightarrow{e_1} L_1 \xrightarrow{e_2} \cdots \xrightarrow{e_{r-1}} L_{r-1} \xrightarrow{e_r} L_r = L'
\]
as above, let $\splitt(M)$ be the movie consisting of a copy of the movie $M^j$ in each $\Delta_j$. (We may also think of $\splitt(M)$ as the \emph{split union} of the movies $M^1, \dots, M^k$, and denote it $M^1 \sqcup \dots \sqcup M^k$.) Again, with the identifications coming from \eqref{eq: BS-split}, the map $BS(\splitt(M))$ (with any choice of compatible weights) decomposes as the product of maps:
\[
BS(\splitt(M)) = BS(M^1) \otimes \dots \otimes BS(M^k).
\]
See Figure \ref{fig: splitting-movie}.

Observe that the movies $\sep(M)$ and $\splitt(M)$ represent isotopic cobordisms in $\R^3 \times I$. Technically, since the initial and final diagrams of $M$ are different diagrams from the initial and final diagrams of $L$, this isotopy is not fixed on the boundary. The precise statement of the isotopy in terms of movies is thus as follows: Let $N$ be the movie consisting of Reidemeister moves from $\sep(L)$ to $\splitt(L)$ as above, followed by $\splitt(M)$ (which runs from $\splitt(L)$ to $\splitt(L')$, followed by Reidemeister moves from $\splitt(L')$ to $\sep(L')$. Then $\sep(M)$ and $N$ are related by a sequence of movie moves.

From the isotopy invariance of maps, we thus deduce:

\begin{lemma} \label{lemma: sep-split}
Let $L$ and $L'$ be link diagrams with decompositions $L = L^1 \cup \dots \cup L^k$ and $L'  = L'{}^1 \cup \dots \cup L'{}^k$, and let $C = C^1 \cup \dots \cup C^k$ be a partition-preserving cobordism from $L$ to $L'$, represented by a movie $M$. Let $w$ be any choice of weights on $M$. Then there are homotopy equivalences making the following diagram commute up to filtered homotopy (up to sign):
\begin{equation} \label{eq: sep-split}
\xymatrix@C=1in{
BS(\sep(L),w) \ar[r]^{BS(\sep(M))} \ar[d]^{\simeq} & BS(\sep(L'),w) \ar[d]^{\simeq} \\
\bigotimes_j BS(L^j,w^j) \ar[r]^{\bigotimes_j BS(M^j)} & \bigotimes_j BS(L'{}^j, w^j).
}
\end{equation}
\end{lemma}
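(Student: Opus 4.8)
The plan is to realise the two vertical maps in \eqref{eq: sep-split} as the maps induced by the ``slide apart'' movies, and then deduce commutativity from the movie-move relation between $\sep(M)$ and $N$ recorded just above, together with the isotopy invariance of Theorem~\ref{thm: movie-moves}.

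First I would construct the verticals. Let $R_L$ be the movie of Reidemeister 2 and 3 moves from $\sep(L)$ to $\splitt(L)$ obtained by sliding the parts of $\sep(L)$ apart in generic directions, and let $R_{L'}$ be the analogous movie from $\sep(L')$ to $\splitt(L')$; equip all frames with the weighting induced by $w$. Since these moves only ever involve strands lying in distinct parts, the weightings are compatible, and applying the construction of Section~\ref{ssec: cobordism-construction} gives filtered, degree-$0$ chain maps $BS(R_L)\co BS(\sep(L),w)\to BS(\splitt(L),w)$ and $BS(R_{L'})\co BS(\sep(L'),w)\to BS(\splitt(L'),w)$. By Theorem~\ref{thm: BS-invariant} (extended to link diagrams via Theorem~\ref{thm: BS-extend}) each constituent Reidemeister map is a filtered homotopy equivalence, hence so are $BS(R_L)$ and $BS(R_{L'})$. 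Using the identification $BS(\splitt(L),w)=\bigotimes_j BS(L^j,w^j)$ from \eqref{eq: BS-split} — and the analogous one for $L'$ — these become the left and right vertical homotopy equivalences of \eqref{eq: sep-split}.

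Next I would compare $BS(\sep(M))$ with the factored map. Let $\bar R_{L'}$ denote the reverse of $R_{L'}$ (a movie from $\splitt(L')$ to $\sep(L')$), and let $N$ be the movie ``$R_L$, then $\splitt(M)$, then $\bar R_{L'}$'', as in the paragraph preceding the statement. By construction the cobordism map of a concatenated movie is the composite of the cobordism maps of its pieces, so $BS(N)=BS(\bar R_{L'})\circ BS(\splitt(M))\circ BS(R_L)$, and as recorded in the discussion above $BS(\splitt(M))=\bigotimes_j BS(M^j)$ under \eqref{eq: BS-split}. Since $\sep(M)$ and $N$ are related by movie moves, Theorem~\ref{thm: movie-moves} gives that $BS(\sep(M))$ is filtered homotopic, up to sign, to $BS(\bar R_{L'})\circ\bigl(\bigotimes_j BS(M^j)\bigr)\circ BS(R_L)$. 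Post-composing this relation with $BS(R_{L'})$, it remains to observe that $BS(R_{L'})\circ BS(\bar R_{L'})$ is the map induced by the movie ``$\bar R_{L'}$ then $R_{L'}$'', which is a Reidemeister sequence followed by its own reverse and hence movie-move equivalent to the constant movie on $\splitt(L')$; so Theorem~\ref{thm: movie-moves} gives $BS(R_{L'})\circ BS(\bar R_{L'})$ filtered homotopic to $\pm\id$. Combining, $BS(R_{L'})\circ BS(\sep(M))$ and $\bigl(\bigotimes_j BS(M^j)\bigr)\circ BS(R_L)$ are filtered homotopic up to sign, which is exactly the commutativity of \eqref{eq: sep-split}.

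The argument is essentially formal once the movie-move relation between $\sep(M)$ and $N$ is granted, which is established in the discussion immediately preceding the lemma. The only points needing care are checking that the weightings are carried consistently through the ``slide apart'' movies (automatic, since those Reidemeister moves never merge or split components and so cannot conflate distinct weights) and bookkeeping the ``filtered, up to sign'' proviso (also automatic, since Theorem~\ref{thm: movie-moves} delivers filtered homotopies up to $\pm 1$ and both properties are preserved under composition). I do not anticipate a genuine obstacle here; the real content has already been absorbed into Theorems~\ref{thm: BS-invariant} and \ref{thm: movie-moves}.
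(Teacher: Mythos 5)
Your proposal is correct and matches the paper's argument: the paper deduces the lemma directly from the movie-move equivalence between $\sep(M)$ and $N$ together with Theorem \ref{thm: movie-moves}, taking the vertical maps to be the ``slide apart'' Reidemeister equivalences and using the identification $BS(\splitt(M))=\bigotimes_j BS(M^j)$. Your extra step of post-composing with $BS(R_{L'})$ and cancelling $BS(R_{L'})\circ BS(\bar R_{L'})\sim\pm\id$ is just the bookkeeping the paper leaves implicit.
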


Recall that two partition-preserving cobordisms $C = C^1 \cup \dots \cup C^k$ and $D = D^1 \cup \dots \cup D^k$ are called \emph{partition-homotopic} if $C^j$ is isotopic rel boundary to $D^j$ for all $j$. Observe that if $C$ and $D$ are partition-homotopic, then $\sep(C)$ and $\sep(D)$ are isotopic rel boundary, as are $\splitt(C)$ and $\splitt(D)$.

\begin{proof}[Proof of Theorem \ref{thm: BS-func}(\ref{item: BS-func-homotopy})]
Let $C$ and $D$ be partition-homotopic cobordisms between partitioned links $L_0$ and $L_1$ as in the previous paragraph. Let $w$ be a weighting on $C$, such that all differences of weights between components in different $C^j$ are invertible in $R$, and let $w_0$ and $w_1$ denote the induced weightings on $L_0$ and $L_1$, respectively. The identification between components of $C$ and components of $D$ induces a weighting on $D$, which we also denote by $w$. Applying Theorem \ref{thm: sep-tangle} and Lemma \ref{lemma: sep-split}, we deduce that the diagram
\[
\xymatrix@C=1in{
BS(L_0,w) \ar[r]^{BS(C)} \ar[d]^{CC} & BS(L_1,w) \ar[d]^{CC} \\
BS(\sep(L_0),w) \ar[r]^{BS(\sep(C))} \ar[d]^{\simeq} & BS(\sep(L_1),w) \ar[d]^{\simeq} \\
BS(\splitt(L_0),w) \ar[r]^{BS(\splitt(C))} \ar[d]^{\id} & BS(\splitt(L_1),w) \ar[d]^{\id} \\
BS(\splitt(L_0),w) \ar[r]^{BS(\splitt(D))} \ar[d]^{\simeq} & BS(\splitt(L_1),w) \ar[d]^{\simeq} \\
BS(\sep(L_0),w) \ar[r]^{BS(\sep(D))} \ar[d]^{CC} & BS(\sep(L_1),w) \ar[d]^{CC} \\
BS(L_0,w) \ar[r]^{BS(D)} & BS(L_1,w)
}
\]
commutes up to homotopy and multiplication by units. Applying the Khovanov TQFT followed by the homology functor, we deduce that the maps
\[
\KhBS(C), \KhBS(D) \co \KhBS(L_0,w_0) \to \KhBS(L_1,w_1)
\]
are equal up to multiplication by a unit in $R$, as required.
\end{proof}

Next, we turn to Theorem \ref{thm: main}, concerning partition-preserving cobordisms between split links. As in Definition \ref{def: splitting}, a decomposition $L = L^1 \cup \dots \cup L^k$ is called a \emph{splitting} if $L^1, \dots, L^k$ are contained in disjointly embedded $3$-balls in $\R^3$. Note that this is really a property of an isotopy class of links. Given any splitting, we may perform an ambient isotopy of $\R^3$ (which may be realized diagrammatically by Reidemeister moves) such that the $3$-balls are round and centered at points in the $(x,y)$ plane. The projection of $L$ onto this plane then yields a split diagram for $L$, in which the images of $L^1, \dots, L^k$ are contained in disjoint disks (and hence do not cross).

Let $L_0$ and $L_1$ be links, each equipped with a $k$-part splitting $L_0 = L_0^1 \cup \dots \cup L_0^k$ and $L_1 = L_1^1 \cup \dots \cup L_1^k$ and represented by split diagrams. Let $C = C^1 \cup \dots \cup C^k$ be a partition-preserving cobordism. Because the diagrams are split, we have $L_0 = \tilde L_0$ and $L_1 = \tilde L_1$. Furthermore, if we choose a weighting on $C$ such that for each $j = 1, \dots, k$, every component of $C^j$ (and hence every component of $L^0_j$ and $L^1_j$) has the same weight $w_j$, observe that the $d_-$ differentials on $BS(L_0,w)$ and $BS(L_1,w)$ both vanish identically, since the only crossings are between strands with the same weight.

Theorem \ref{thm: main} will follow from the following lemma:

\begin{lemma} \label{lemma: Kh-split}
Let $L = L^1 \cup \dots \cup L^k$ and $L' = L'{}^1 \cup \dots \cup L'{}^k$ be split link diagrams, and let $C = C^1 \cup \dots \cup C^k$ be a partition-preserving cobordism from $L$ to $L'$, represented by a movie $M$. Then $\Kh(C) \sim \pm \Kh(C^1) \otimes \dots \Kh(C^k)$ (in $\Kob(\emptyset;\Z)$).
\end{lemma}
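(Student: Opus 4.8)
The plan is to run the argument behind Theorem~\ref{thm: BS-func}\eqref{item: BS-func-homotopy}, but in the special setting of split links and with $\Z$-coefficients. The one genuinely new point is passing from ``up to a unit of $R$'' to ``up to $\pm 1$'', and I would handle this by observing that a two-part split cobordism can always be separated over $\Z$: assign weight $0$ to one part and $1$ to the other, so that the relevant weight difference is $-1$, a unit in $\Z$, whose only units are $\pm 1$. For general $k$ one then iterates.

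So first I would treat $k=2$. Equip $C$ with the weighting $w$ in which every component of $C^1$ has weight $0$ and every component of $C^2$ has weight $1$, and give $L$, $L'$, and every frame of the movie $M$ the induced weighting. Since $L$ and $L'$ are split diagrams, every crossing lies between two strands of the same part, so the $d_-$ differentials on $BS(L,w)$ and $BS(L',w)$ vanish; likewise $d_-$ vanishes on each $BS(L^j,w^j)$, on $BS(\splitt(L),w)=BS(L^1,w^1)\otimes BS(L^2,w^2)$, and on the corresponding complexes over $L'$. In the notation of Theorem~\ref{thm: sep-tangle} the end diagrams of $\sep(M)$ are $\tilde T_0 = T_0 = L$ and $\tilde T_r = T_r = L'$ (there are no inter-part crossings to reverse), so the crossing-change maps $CC$ at the two ends are identity morphisms; Theorem~\ref{thm: sep-tangle}\eqref{item: sep-commute} then gives that $BS(M)$ and $BS(\sep(M))$ are homotopic in $\CKob$ up to a unit of $\Z$, i.e.\ up to $\pm1$ (not necessarily by a filtered homotopy). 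Lemma~\ref{lemma: sep-split} next supplies filtered degree-$0$ homotopy equivalences $G_0, G_1$ with $BS(\sep(M)) \simeq \pm\, G_1^{-1}\circ(BS(M^1)\otimes BS(M^2))\circ G_0$. Combining, $BS(M) \simeq \pm\, G_1^{-1}\circ(BS(M^1)\otimes BS(M^2))\circ G_0$ in $\CKob$.

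Now I would pass to associated graded maps. All of the complexes above have vanishing $d_-$, and $BS(M)$, $BS(M^1)\otimes BS(M^2)$, $G_0$, $G_1$ are homogeneous for the Batson--Seed grading, so Lemma~\ref{lemma: dminus0} applies. The degree-preserving part of $BS(M)$ is, by the construction in Section~\ref{sec: cobordism-maps} (and independently of the weighting), the Bar-Natan cobordism map $\Kh(C)$; the degree-preserving part of $BS(M^j)$ is $\Kh(C^j)$ (here using again that all components of $C^j$ carry the same weight, so the Reidemeister~3 correction terms drop out of each $BS(e)$); and $G_0^{(0)}, G_1^{(0)}$ are the standard Khovanov-complex homotopy equivalences coming from the Reidemeister moves relating $\sep(L)$ to $\splitt(L)$ (resp.\ $L'$). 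Taking degree-$0$ parts of the displayed homotopy therefore yields $\Kh(C) \sim \pm\, (G_1^{(0)})^{-1}\circ(\Kh(C^1)\otimes\Kh(C^2))\circ G_0^{(0)}$ in $\Kob(\emptyset;\Z)$, which under the canonical identification of the formal Khovanov complex of $L$ with that of $\splitt(L)$ is exactly the assertion $\Kh(C)\sim\pm\,\Kh(C^1)\otimes\Kh(C^2)$.

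Finally, for $k\ge 3$ I would induct on $k$: after enlarging disks so that $L^2,\dots,L^k$ all lie in a single disk disjoint from the one containing $L^1$, the cobordism $C$ is a two-part split cobordism with parts $C^1$ and $C^2\cup\dots\cup C^k$, so the case just proved gives $\Kh(C)\sim\pm\,\Kh(C^1)\otimes\Kh(C^2\cup\dots\cup C^k)$; applying the inductive hypothesis to the $(k-1)$-part split cobordism $C^2\cup\dots\cup C^k$ then finishes the argument. I expect the main obstacle to be precisely the sign/unit bookkeeping just sketched — arranging that every ``up to a unit'' statement is invoked over $\Z$, and checking that $d_-$ really does vanish on every complex entering the associated-graded step, which is what forces the per-part-constant choice of weights. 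Everything else is a direct application of Theorem~\ref{thm: sep-tangle}, Lemma~\ref{lemma: sep-split}, and Lemma~\ref{lemma: dminus0}.
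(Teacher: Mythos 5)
Your proposal is correct and follows essentially the same route as the paper: choose per-part weights $0$ and $1$ over $\Z$ so the inter-part weight difference is a unit, apply Theorem~\ref{thm: sep-tangle} with trivial $CC$ maps to get $BS(M)\sim\pm BS(\sep(M))$, pass to homological-grading-preserving parts via Lemma~\ref{lemma: dminus0}, and induct on $k$ by grouping $C^2\cup\dots\cup C^k$ into one part. The only cosmetic difference is that the paper finishes the base case by invoking isotopy invariance of the Khovanov maps to identify $\Kh(\sep(C))$ with $\pm\Kh(\splitt(C))$, whereas you route that step through Lemma~\ref{lemma: sep-split} at the Batson--Seed level before taking associated gradeds; both are valid.
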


\begin{proof}
Unlike in the preceding work, where the coefficient ring was arbitrary, we now wish to specialize to the case where $R = \Z$. Since this ring has very few invertible elements (namely $\pm 1$), our choices of weights are quite constrained, so we need to induct on $k$. (Note that $k$ is simply the number of parts in the partition; the cobordisms $C^j$ and the links $L^j$ and $L'{}^j$ need not be connected.)

First, consider the base case $k=2$. Assign the weight $1$ to all components of $C^1$, and assign the weight $0$ to all components of $C^2$. Consider the square \eqref{eq: sep(M)}, with $L$ and $L'$ in place of $T$ and $T'$. By assumption, in the diagram $L$ (resp.~$L'$), there are no crossings between components of $L^1$ and $L^2$ (resp.~$L'{}^1$ and $L'{}^2$). As a result, the vertical maps are simply the identity, and we deduce from Theorem \ref{thm: sep-tangle} that $BS(M) \sim \pm BS(\sep(M))$ in $\CKob(\emptyset; \Z)$. Furthermore, the $d_-$ differentials vanish on both $BS(L)$ and $BS(L')$. By Lemma \ref{lemma: dminus0}(2), we deduce that the homological grading $0$ parts of $BS(C)$ and $BS(\sep(C))$ are homotopic (up to sign) in $\Kob(\emptyset;\Z)$. By construction (see Section \ref{ssec: cobordism-construction}), these are $\Kh(C)$ and $\Kh(\sep(C))$, respectively, as required. And by the isotopy invariance of the Khovanov maps \cite[Theorem 4]{BarNatanTangles}, we have $\Kh(\sep(C)) \sim \pm \Kh(\splitt(C))$. This completes the $k=2$ case.

For the induction, let $C' = C^1 \cup \dots \cup C^{k-1}$. By applying the previous case to the decomposition $C = C' \cup C^k$, we deduce that $\Kh(C) \sim \pm \Kh(C' \sqcup C^k)$. Thus, $\Kh(C) \sim \pm \Kh(C') \otimes \Kh(C^k)$. By induction, we have $\Kh(C') \sim \pm \Kh(C_1) \otimes \Kh(C_{k-1})$, which give the necessary result.
\end{proof}

\begin{proof}[Proof of Theorem \ref{thm: main}]
Let $L_0$ and $L_1$ be link diagrams, and assume that we have \emph{splittings} $L_0 = L_0^1 \cup \dots \cup L_0^k$ and $L_1 = L_1^1 \cup \dots \cup L_1^k$. We do not require $L_0$ and $L_1$ to be split diagrams; we simply assume that there is a sequence $S_i$ of Reidemeister moves taking $L_i$ to a split diagram $L_i'$. Let $C$ and $D$ be partition-preserving cobordisms from $L_0$ to $L_1$, represented by movies $M_C$ and $M_D$ respectively, and assume that $C$ and $D$ are partition-homotopic. Let $M_C'$ (resp.~$M_D'$) be the movie from $L_0'$ to $L_1'$ given consisting of the reverse of $S_0$, followed by $M_C$ (resp.~$M_D$), followed by $S_1$. Lemma \ref{lemma: Kh-split} gives $\Kh(M_C') \sim \pm \Kh(M_D')$ (as morphisms in $\Kob(\emptyset;\Z)$). By pre- and post-composing with the homotopy equivalences induced by $S_0$ and $S_1$, we deduce that $\Kh(M_C) \sim \pm \Kh(M_D)$, again in the abstract sense. We then apply the Khovanov TQFT followed by the homology functor to deduce that the induced maps on Khovanov homology groups are equal up to an overall sign, as required.
\end{proof}

\begin{remark}
In principle, Theorem \ref{thm: main} gives an obstruction to a link $L$ being split: if there exist partition-homotopic cobordisms $C,D:L \to L$ such that the induced maps $\Kh(C)$ and $\Kh(D)$ are not equal up to sign, then $L$ cannot be split. However, there are much more direct ways of detecting splitness using Khovanov homology, namely the results of Lipshitz and Sarkar \cite{LipSar} discussed in the next section.
\end{remark}

We now address Corollary \ref{cor: closed-component}. To begin, suppose $S$ is a smoothly embedded, closed, orientable surface in $\R^3 \times [0,1]$. Viewing $S$ as a cobordism from the empty link to itself, and noting that $\Kh(\emptyset)$ is defined to be $\Z$, the induced map is multiplication by some integer, known as the \emph{Khovanov--Jacobsson number of $S$}, $\KJ(S)$. When $S$ is connected, Rasmussen \cite{RasmussenClosed} and Tanaka \cite{tanaka} showed that $\KJ(S)$ depends only on the genus of $S$: it is $2$ if $g(S)=1$, and $0$ otherwise. Crucially, it does not depend at all on the knotting of $S$. The methods of Tanaka's paper also easily show that for a dotted surface $S^\bullet$, $\KJ(S^\bullet)$ equals $1$ if $g(S)=0$, and $0$ otherwise.

\begin{proof}[Proof of Corollary \ref{cor: closed-component}]
Let $C$ be a cobordism from $L$ to $L'$, and let $S$ be a closed surface (possibly dotted) in the complement of $S$. Let $S'$ denote a copy of $S$, translated to lie in a $4$-ball disjoint from $C$. We may view $C \cup S$ and $C \cup S'$ as a partition-homotopic cobordisms between the split links $L \cup \emptyset$ and $L' \cup \emptyset$. By Theorem \ref{thm: main}, we have
\[
\Kh(C \cup S) \sim \pm \Kh(C \cup S')  = \pm \KJ(S) \Kh(C)
\]
as required.
\end{proof}

Corollary \ref{cor: closed-component} together with Tanaka's results completely determine the Khovanov--Jacobson number of a \emph{surface link}, i.e., a closed surface with multiple components:

\begin{corollary} \label{cor: KJ}
Let $S = S^1 \cup \dots \cup S^k$ be a surface link in $\R^3 \times [0,1]$. Then
\[
\KJ(S) = \pm \prod_j \KJ(S^j) =
\begin{cases}
  \pm 2^k & \text{if every component $S^j$ is a torus} \\
  0 &\text{otherwise}.
\end{cases}
\]
\end{corollary}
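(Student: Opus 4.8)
The plan is to deduce Corollary \ref{cor: KJ} from Corollary \ref{cor: closed-component} by induction on the number of components $k$, with the base case handled by the theorems of Rasmussen and Tanaka. For $k=1$, $S = S^1$ is a closed, connected, orientable surface in $\R^3 \times [0,1]$, and the assertion that $\KJ(S^1)$ equals $\pm 2$ when $S^1$ is a torus and $0$ otherwise is precisely the Rasmussen--Tanaka computation recalled in the paragraph preceding the proof of Corollary \ref{cor: closed-component} (recall that a connected closed orientable surface is a sphere, a torus, or has genus $>1$).

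For the inductive step, assume $k \ge 2$ and write $S = S' \cup S^k$ with $S' = S^1 \cup \dots \cup S^{k-1}$. Viewing $S'$ as a cobordism from the empty link to itself, the induced map $\Kh(S') \co \Z \to \Z$ is multiplication by $\KJ(S')$, and $S^k$ is a closed, connected, orientable surface embedded in the complement of $S'$ (the two are disjoint, though possibly linked). Applying Corollary \ref{cor: closed-component} with $C = S'$ and the closed surface $S^k$: if $S^k$ is a sphere or has genus $>1$ then $\Kh(S' \cup S^k) = 0$, so $\KJ(S) = 0$; if $S^k$ is a torus then $\Kh(S' \cup S^k) = \pm 2\,\Kh(S')$, so $\KJ(S) = \pm 2\,\KJ(S')$. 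Combining with the inductive hypothesis $\KJ(S') = \pm \prod_{j=1}^{k-1} \KJ(S^j)$ yields $\KJ(S) = \pm \prod_{j=1}^{k} \KJ(S^j)$; this is $\pm 2^k$ when every $S^j$ is a torus, and $0$ otherwise (if some $S^j$ with $j<k$ fails to be a torus then $\KJ(S') = 0$ and $\KJ(S) = \pm 2\cdot 0 = 0$, while if $S^k$ fails to be a torus then $\KJ(S)=0$ directly).

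Alternatively, one can bypass the induction: $S$ is partition-homotopic, as a cobordism between the empty link (partitioned into $k$ empty parts) and itself, to the split cobordism $S^1 \sqcup \dots \sqcup S^k$ obtained by pulling the components apart into pairwise-disjoint $4$-balls, so Theorem \ref{thm: main} gives $\Kh(S) = \pm\,\Kh(S^1 \sqcup \dots \sqcup S^k)$; by \eqref{eq: Kh-cob-tensor} and $\Kh(\emptyset) = \Z$, the right side is $\pm \prod_j \Kh(S^j) = \pm \prod_j \KJ(S^j)$, and Tanaka's results evaluate each factor.

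There is essentially no obstacle here: the corollary is a formal consequence of Corollary \ref{cor: closed-component} (equivalently, of Theorem \ref{thm: main}). The only points that merit a moment's care are that the components of a surface link are connected closed orientable surfaces, so the Rasmussen--Tanaka classification genuinely applies to each of them, and that the accumulating $\pm$ ambiguities are harmless since the statement is only claimed up to sign.
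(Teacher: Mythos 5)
Your proof is correct and follows exactly the paper's intended argument: the paper's proof of this corollary is the single line ``Induct using Corollary \ref{cor: closed-component},'' and your inductive step (peeling off $S^k$ as a closed connected surface in the complement of $S' = S^1 \cup \dots \cup S^{k-1}$) together with the Rasmussen--Tanaka base case is precisely that induction spelled out. The alternative route via Theorem \ref{thm: main} is also valid but is not what the paper does here.
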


\begin{proof}
Induct using Corollary \ref{cor: closed-component}.
\end{proof}

\begin{remark}
We note that Theorem \ref{thm: main} generalises to split tangles and partition-preserving tangle cobordisms between them, where by a \emph{split tangle} we mean a tangle whose connected components can be partitioned in a way that distinct partitions can be split from each other by the use of Reidemeister moves. The proof is identical to the proof of Theorem \ref{thm: main}, and it works for the simple reason that the proof of Theorem \ref{thm: main} works at the level of abstract complexes in $\CKob_{/h}$, without passing to homology groups.
%
%
\end{remark}

\section{Applications to ribbon concordance} \label{sec: SHR}

We now turn to the original motivation for this project: applications of Khovanov homology to ribbon concordance and the more general notion of strongly homotopy-ribbon concordance.

When working with maps on Khovanov homology, we must \emph{a priori} take some care to distinguish between cobordisms in $\R^3 \times I$ and in $S^3 \times I$. We identify $S^3$ with $\R^3 \cup \{\infty\}$. While a generic link cobordism in $S^3 \times I$ can be arranged to avoid $\{\infty\} \times I$, a generic isotopy of cobordisms might not; that is, two cobordisms in $\R^3 \times I$ may be isotopic in $S^3 \times I$ but not in $\R^3 \times I$. The original proofs of isotopy invariance for the maps on Khovanov homology \cite{KhovanovCobordism, JacobssonCobordisms, BarNatanTangles} apply only to isotopy in $\R^3 \times I$. However, Morrison, Walker, and Wedrich \cite{MorrisonWalkerWedrich} have recently shown that isotopy invariance also holds in $S^3 \times I$.

The key topological fact for the proof of Theorem \ref{thm: SHR} is the following lemma, which is essentially due to Miller and Zemke \cite{MillerZemkeHomotopyRibbon}.

\begin{lemma} \label{lemma: MZ}
Let $C \subset S^3 \times I$ be a strongly homotopy-ribbon concordance from $L_0$ to $L_1$. Let $\overline C$ denote the mirror of $C$, viewed as a concordance from $L_1$ to $L_0$, and let $D$ denote the composite cobordism $C \cup _{L_1} \overline C$, viewed as a concordance from $L_0$ to itself. Then there exist disjointly embedded $2$-spheres $S_1, \dots, S_k \subset S^3 \times I \minus D$, and disjointly embedded $3$-dimensional $1$-handles $h_1, \dots, h_k$, where $h_i$ connects $D$ to $S_i$ and is otherwise disjoint from $D \cup S_1 \cup \dots \cup S_k$, such that the surface $D'$ obtained from $D \cup S_1 \cup \dots \cup S_k$ by surgery along $h_1, \dots, h_k$ is isotopic (rel boundary) to $L_0 \times I$.
\end{lemma}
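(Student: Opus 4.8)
The plan is to follow the topological construction of Miller and Zemke \cite{MillerZemkeHomotopyRibbon} essentially verbatim, adapting their argument from the knot Floer setting (where it was phrased in terms of the $3$-dimensional complement) to the statement needed here. First I would recall what a strongly homotopy-ribbon concordance gives us: the complement $W = (S^3 \times I) \setminus \nbd(C)$ is obtained from $(S^3 \setminus \nbd(L_0)) \times I$ by attaching only $4$-dimensional $1$- and $2$-handles. Dually, reading from the $L_1$ end, $W$ is built from $(S^3 \setminus \nbd(L_1)) \times I$ by attaching only $3$- and $4$-handles. The key move is to take the composite $D = C \cup_{L_1} \overline C$ and analyze its complement: gluing $W$ to its mirror $\overline W$ along $(S^3 \setminus \nbd(L_1)) \times \{1\}$. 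Because $\overline W$ has a handle decomposition with only $3$- and $4$-handles relative to the $L_1$ end, turning it upside down it has only $1$- and $2$-handles, so the complement of $D$ is obtained from $(S^3 \setminus \nbd(L_0)) \times I$ by attaching $1$- and $2$-handles followed by $2$- and $3$-handles — but the point is that one can cancel or rearrange so that, after handle trading, the $1$-handles and the $3$-handles are geometrically dual in a controlled way.

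The heart of the argument is then to convert this handle-theoretic statement into the geometric statement about $2$-spheres and $1$-handles. Each $1$-handle attached to $(S^3\setminus\nbd(L_0))\times I$ in building the complement of $D$ can, after the two halves are glued, be paired with a $3$-handle of $\overline W$; the belt sphere of such a cancelling pair, pushed into the boundary $\partial\nbd(D)$, traces out a $2$-sphere $S_i$ in $(S^3\times I)\setminus D$, and the core of the $1$-handle becomes a $3$-dimensional $1$-handle $h_i$ connecting $D$ to $S_i$. Making these disjoint from each other is arranged by a general position argument, using that the relevant attaching/belt spheres can be isotoped to be disjoint. After surgering $D\cup S_1\cup\dots\cup S_k$ along the $h_i$, the resulting surface $D'$ has complement obtained from $(S^3\setminus\nbd(L_0))\times I$ by attaching only $2$-handles, in cancelling pairs with the remaining $2$-handles from the other side; so the complement of $D'$ is a product $(S^3\setminus\nbd(L_0))\times I$, and one checks that this product structure on the complement, together with the fact that $D'$ is a concordance, forces $D'$ to be isotopic rel boundary to $L_0\times I$ (e.g.\ by an innermost-disk / uniqueness-of-product-structure argument, or by invoking that a concordance whose complement is a product is trivial).

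The main obstacle I anticipate is the bookkeeping of the handle cancellation: one must be careful that trading handles and sliding them to achieve the geometric duality can be done \emph{away from} the boundary links and compatibly with the product structure at the two ends, and that the resulting spheres $S_i$ and handles $h_i$ can genuinely be made pairwise disjoint and disjoint from $D$ except along the prescribed attaching regions. This is exactly the technical content of \cite{MillerZemkeHomotopyRibbon}, so the honest thing to do is to cite their argument and indicate that it applies mutatis mutandis, noting only that we need the conclusion phrased for $S^3\times I$ rather than for the $3$-manifold complements — which is harmless since all the surgeries and isotopies take place in a collar neighborhood and the passage between the two formulations is routine. A secondary point to be careful about is that the mirror $\overline C$ really does have the dual handle decomposition claimed; this is immediate from turning the cobordism upside down, but worth stating explicitly.
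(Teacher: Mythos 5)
Your overall strategy---reduce everything to the topological work of Miller--Zemke and cite it---is exactly what the paper does: its proof consists of quoting \cite[Lemma 3.1]{MillerZemkeHomotopyRibbon} for the handle structure of the complement of $D$ and the proof of \cite[Theorem 1.2]{MillerZemkeHomotopyRibbon} for the tubing statement. However, your reconstruction of what that argument actually produces has two substantive errors. First, the spheres $S_i$ do not arise as belt spheres of cancelling $1$-handle/$3$-handle pairs. In Miller--Zemke's decomposition the complement of $D$ is built from $(S^3\minus \nbd(L_0))\times I$ by $n$ $1$-handles, $n$ $2$-handles $H_1,\dots,H_n$, $n$ further $2$-handles $H_1',\dots,H_n'$ attached along the belt circles of the $H_i$, and $n$ $3$-handles; the spheres are $\Sigma_i=(\text{cocore of }H_i)\cup(\text{core of }H_i')$, i.e., they come from the $2$-handle/$2$-handle pairs. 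The $1$- and $3$-handles are not the source of the spheres, and ``handle trading to make the $1$- and $3$-handles geometrically dual'' is not part of the argument.

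Second, and more seriously, your concluding step---the complement of $D'$ is a product, and ``a concordance whose complement is a product is trivial''---is not a theorem. Smoothly this implication is essentially open (already for a disk in $B^4$ whose complement is $S^1\times B^3$ it is a form of the unknotting problem), and no innermost-disk or uniqueness-of-product-structure argument will supply it. The Miller--Zemke proof does not pass through the complement at the last step: it shows directly that the surface $D$ becomes isotopic to the identity concordance after tubing with the $\Sigma_i$, and that is the statement one must quote. Finally, a small point the paper does address and you omit: a given sphere $\Sigma_i$ may be needed in more than one tubing operation, which is handled by taking parallel pushoffs of $\Sigma_i$, using that it has trivial self-intersection in $S^3\times I$.
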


\begin{proof}
By \cite[Lemma 3.1]{MillerZemkeHomotopyRibbon}, the complement $S^3 \times I \minus \nbd(D)$ is built from $(S^3 \minus L_0) \times I$ by attaching the following handles:
\begin{enumerate}
\item $n$ $1$-handles;
\item $n$ $2$-handles $H_1, \dots, H_n$;
\item $n$ more $2$-handles $H'_1, \dots, H'_n$, attached along the belt circles of $H_1, \dots, H_n$ respectively;
\item $n$ $3$-handles.
\end{enumerate}
(This is stated only for concordances of knots in \cite{MillerZemkeHomotopyRibbon}, but the proof is identical in the case of links.) For $i=1, \dots, n$, let $\Sigma_i$ be the sphere obtained as the union of the cocore of $H_i$ and the core of $H_i'$. These spheres have trivial self-intersection since they are contained in $S^3 \times I$.

As in the proof of \cite[Theorem 1.2]{MillerZemkeHomotopyRibbon}, $D$ can be transformed into the identity cobordism by isotopy and tubing with the spheres $\Sigma_i$ --- that is, performing surgery along an embedded $3$-dimensional $1$-handle joining $D$ to $\Sigma_i$. Note that $D$ may be tubed with $\Sigma_i$ more than once (say $k_i$ times) during this process. In other words, we take $k_i$ parallel pushoffs of $\Sigma_i$ (which are disjoint since $\Sigma_i$ has trivial self-intersection) and perform each successive tubing operation with a separate copy. These spheres, taken over all $i=1, \dots, n$ are the $S_1, \dots, S_k$ called for in the lemma.
\end{proof}

\begin{proof}[Proof of Theorem \ref{thm: SHR}]
Let $C \subset S^3 \times I$ be a strongly homotopy-ribbon concordance from $L_0$ to $L_1$. Let $\overline C$, $D$, $S_1, \dots, S_k$, $h_1, \dots, h_k$, and $D'$ be as in Lemma \ref{lemma: MZ}. Applying the neck-cutting relations to the tubes in $D'$, we have
\[
\Kh(D') =  \sum_{\vec e \in \{\varnothing, \bullet\}^k} \pm \Kh(D \cup S_1^{e_1} \cup \dots \cup S_k^{e_k} )
\]
for some choices of signs, where $S_i^{\varnothing}$ and $S_i^{\bullet}$ denote $S_i$ without or with a dot, respectively. Applying Corollary \ref{cor: closed-component}, we have:
\begin{align*}
\Kh(D') &=  \pm \Kh(D \cup S_1^\bullet \cup \dots \cup S_k^\bullet) \\
&=  \pm \Kh(D).
\end{align*}
At the same time, we have
\[
\Kh(D') = \pm \Kh( \id_{L_0}) = \pm \id_{\Kh(L_0)}
\]
by isotopy invariance. Thus,
\[
\Kh(\overline{C}) \circ \Kh(C) = \pm \id_{\Kh(L_0)},
\]
which shows that $\Kh(C)$ is injective and, indeed, left-invertible.
\end{proof}

We now turn to the proof of Theorem \ref{thm: ribbon-split}, which is a purely topological statement. To begin, let $\F$ denote the field of two elements, and let $R = \F[X,Y]/(X^2,Y^2)$. Given a link $L \subset S^3$ and points $p, q \in L$, $\Kh(L;\F)$ acquires the structure of an $R$-module, where the action of $X$ (resp.~$Y$) is given by the action of the product cobordism $L \times I$ with a dot on the component containing $p$ (resp.~$q$). Moreover, if $C$ is a link cobordism from $L_0$ to $L_1$, and $p_0, q_0 \in L_0$ and $p_1, q_1 \in L_1$ are chosen such that $p_0$ and $p_1$ (resp.~$q_0$ and $q_1$) are on the same component of $C$, then the induced map $\Kh(C) \co \Kh(L_0;\F) \to \Kh(L_1;\F)$ is $R$-linear.

A recent theorem of Lipshitz and Sarkar \cite[Corollary 1.3]{LipSar} shows that the $R$-module structure of $\Kh(L;\F)$ detects link splitting, in the following sense: there exists an embedded $2$-sphere in $S^3 \minus L$ separating $p$ from $q$ if and only if $\Kh(L;\F)$ is a free $R$-module, where $R$ acts as above.

\begin{proof}[Proof of Theorem \ref{thm: ribbon-split}]
Suppose $C$ is a strongly homotopy-ribbon concordance from $L_0$ to $L_1$, and suppose there is a $2$-sphere in $S^3 \minus L_1$ separating components $L_1^i$ and $L_1^j$. Let $L_0^i$ and $L_0^j$ be the corresponding components of $L_0$. Choose points $p_0 \in L_0^i$, $q_0 \in L_0^j$, $p_1 \in L_1^i$, and $q_1 \in L_1^j$, and equip $\Kh(L_0)$ and $\Kh(L_1)$ with the corresponding $R$-module structures; then $\Kh(L_1;\F)$ is free over $R$. The map $\Kh(C)$ is $R$-linear and has a left inverse (which is also $R$-linear) by Theorem \ref{thm: SHR}, so $\Kh(L_0;\F)$ injects as a direct summand of $\Kh(L_1;\F)$ and is therefore a projective $R$-module. Since $R$ is a local ring (with unique maximal ideal $(X,Y)$), every projective module is free, and thus $\Kh(L_0;\F)$ is free over $R$. Hence, by \cite[Corollary 1.3]{LipSar}, there is an embedded $2$-sphere in $S^3 \minus L_0$ separating $p_0$ from $q_0$, as required.
\end{proof}

\begin{remark} \label{rmk: ribbon-split-HF}
We may give an alternate proof of Theorem \ref{thm: ribbon-split} using Heegaard Floer homology. Given a link $L$ and points $p,q \in L$, the Heegaard Floer homology of the branched double cover of $L$, $\HF(\Sigma(L))$, acquires the structure of an $\F[X]/(X^2)$--module. Building on earlier work of Hedden--Ni \cite{HeddenNiUnlink} and Alishahi--Lipshitz \cite{AlishahiLipshitzSurfaces}, Lipshitz and Sarkar \cite{LipSar} proved that $\HF(\Sigma(L))$ is a free $\F[X]/(X^2)$--module if and only if $p$ and $q$ are separated by a $2$-sphere in $S^3 \minus L$. They then used this result along with a careful analysis of the Ozsv\'ath--Szab\'o spectral sequence from $\Khred(L)$ to $\HF(\Sigma(\overline L))$ \cite{OSzDouble} to deduce the analogous detection result for Khovanov homology (both reduced and unreduced), as mentioned above.

Now, if $L_0$ is ribbon concordant to $L_1$, then a result of Daemi, Lidman, Vela-Vick, and Wong \cite[Theorem 1.19]{DaemiLidmanVelaVickWong} shows that $\HF(\Sigma(L_0))$ injects into $\HF(\Sigma(L_1))$ as a summand, and it is not hard to see that this holds at the level of $\F[X]/(X^2)$--modules as well. The proof of Theorem \ref{thm: ribbon-split} then follows along the same lines as above.
\end{remark}

%
%
%
%
%
%

%

\bibliography{GL-bibliography}
\bibliographystyle{amsalpha}

\end{document}